\documentclass[11pt]{amsart}
\usepackage[utf8]{inputenc}
\usepackage[T1]{fontenc}
\usepackage[english]{babel}

\usepackage[left=3cm,right=3cm,top=3cm,bottom=3cm]{geometry}

%%%%%%%%%%%%%%%%%%%%%%%%%
%%% POLICES, SYMBOLES,...
%%%%%%%%%%%%%%%%%%%%%%%%%

\usepackage{mlmodern}
\usepackage{eucal}

\usepackage{amsmath}
\usepackage{amsthm}
\usepackage{amssymb}
\usepackage{mathtools}
\usepackage{stmaryrd}
\usepackage{mathrsfs}  % lettres \mathscr{}
\usepackage{dsfont}  % pour le 1 d'une indicatrice

\usepackage{chngcntr}

%%%%%%%%%%%%
%%% SOMMAIRE
%%%%%%%%%%%%

% indenter les subsections dans le sommaire
% (c'est le 2.5pc qui compte)

\makeatletter
\def\l@subsection{\@tocline{2}{0pt}{2.5pc}{5pc}{}}
\makeatother

% TOCLESS SECTION

\newcommand{\nocontentsline}[3]{}
\newcommand{\tocless}[2]{\bgroup\let\addcontentsline=\nocontentsline#1{#2}\egroup}

\let\origcontentsline\addcontentsline
\newcommand\stoptoc{\let\addcontentsline\nocontentsline}
\newcommand\resumetoc{\let\addcontentsline\origcontentsline}

\DeclareRobustCommand{\SkipTocEntry}[5]{}

% sections en gras dans la ToC

\makeatletter
\renewcommand{\l@section}{\@tocline{1}{0pt}{10pt}{1pc}{\bfseries}}
%@tocline arguments
%#1: sectioning level (chapter=0)
%#2: vertical space before the entry
%#3: indent of the entry (default 0pt)
%#4: space for number
%#5: style commands (e.g., font type/size)
\makeatother

%%%%%%%%%%%%%%%%%%%%%%
%%% PAQUETS GRAPHIQUES
%%%%%%%%%%%%%%%%%%%%%%

\usepackage[dvipsnames]{xcolor}
\usepackage[all]{xy}
\usepackage{tikz-cd}
\usepackage{array}
\usetikzlibrary{shapes}
\usetikzlibrary[patterns]
\usepackage[tableposition=below]{caption}
\usepackage{subcaption}
\captionsetup[subfigure]{margin=0pt, parskip=0pt, hangindent=0pt, indention=0pt, labelformat=parens, labelfont=rm}

\usepackage{lscape}  % page en paysage

\usepackage[colorlinks=true, citecolor=MidnightBlue, linkcolor=MidnightBlue]{hyperref}
\usepackage{cleveref}

%%%%%%%%%%
%%% LISTES
%%%%%%%%%%

\usepackage{enumitem}
\setitemize{label=$\triangleright$}
\setlist{topsep=8pt, itemsep=4pt }

%%%%%%%%%
%%% NOTES
%%%%%%%%%

\setlength{\marginparwidth}{2.5cm}
\usepackage{todonotes}

%%%%%%%%%%%%%%%%%%%%%%%%%%%%%%%%%%
%%% THEOREMES, DEFINITIONS, ETC...
%%%%%%%%%%%%%%%%%%%%%%%%%%%%%%%%%%

\makeatletter
\def\thm@space@setup{%
  \thm@preskip=4\topsep \thm@postskip=\thm@preskip
}
\makeatother

 \newtheorem{theo}{Theorem}[section]
 \newtheorem{prop}[theo]{Proposition}
 \newtheorem{defi}[theo]{Definition}
 \newtheorem{lem}[theo]{Lemma}
 \newtheorem{coro}[theo]{Corollary}
 \newtheorem{conj}[theo]{Conjecture}

 \newtheorem*{namedtheorem}{\theoremname}
 \newcommand{\theoremname}{Theorem}
 \newenvironment{named}[1]{\renewcommand{\theoremname}{#1}\begin{namedtheorem}}{\end{namedtheorem}}

 \theoremstyle{remark}
 
 \newtheorem{rk}[theo]{Remark}
 \newtheorem{ex}[theo]{Example}

 \newenvironment{demo}{\begin{proof}}{\end{proof}}

%%%%%%%%%%%%%%%%%
% RACCOURCIS TIKZ
%%%%%%%%%%%%%%%%%

\newcommand{\floor}{\node[draw,ellipse, minimum width=1cm, minimum height = 0.6 cm]}

\def\mfloor (#1) at (#2,#3) {
    \node[draw,ellipse, minimum width=1cm, minimum height = 0.6 cm] (#1) at (#2,#3) {$\bullet$} ;
}

\def\ufloor (#1) at (#2,#3) (#4) {
    \node[draw,ellipse, minimum width=1cm, minimum height = 0.6 cm] (#1) at (#2,#3) {\scriptsize #4} ;
}

\def\marked (#1) to (#2) pos=#3 in=#4 out=#5 {
   \draw (#1) to[out=#5,in=#4] node[pos=#3] {$\bullet$} (#2) ;
}

\def\dashedmarked (#1) to (#2) pos=#3 in=#4 out=#5 {
   \draw[dashed] (#1) to[out=#5,in=#4] node[pos=#3] {$\bullet$} (#2) ;
}

\def\leftmarked (#1) to (#2) pos=#3 in=#4 out=#5 w=#6 {
   \draw[line width=2pt] (#1) to[out=#5,in=#4] node[pos=#3] {$\bullet$} node[midway,left] {$#6$} (#2) ;
}

\def\wlmarked (#1) to (#2) pos=#3 in=#4 out=#5 w=#6 {
   \draw (#1) to[out=#5,in=#4] node[pos=#3] {$\bullet$} node[midway,left] {$#6$} (#2) ;
}

\def\rightmarked (#1) to (#2) pos=#3 in=#4 out=#5 w=#6 {
   \draw[line width=2pt] (#1) to[out=#5,in=#4] node[pos=#3] {$\bullet$} node[midway,right] {$#6$} (#2) ;
}

\def\doublemarked (#1) to (#2) pos=#3 in=#4 out=#5 {
   \draw[line width=2pt] (#1) to[out=#5,in=#4] node[pos=#3] {$\bullet$} (#2) ;
}

%%%%%%%%%%%%%%%%%%%%
%%% RACCOURCIS MATHS
%%%%%%%%%%%%%%%%%%%%

% lettres double barre

\newcommand{\R}{\mathbb{R}}

\newcommand{\N}{\mathbb{N}}
\newcommand{\Z}{\mathbb{Z}}
\newcommand{\FF}{\mathbb{F}}
% \renewcommand{\P}{\mathbb{P}}

% lettres calligraphiées

\newcommand{\D}{\mathcal{D}}

\newcommand{\calL}{\mathcal{L}}

% symboles 

\renewcommand{\geq}{\geqslant}
\renewcommand{\leq}{\leqslant}

\newcommand{\<}{\langle}
\renewcommand{\>}{\rangle}

\renewcommand{\bar}{\overline}

% opérateurs

\renewcommand{\div}{\mathrm{div}}
\newcommand{\GL}{\mathrm{GL}}
\newcommand{\codeg}{\mathrm{codeg}}

\newcommand{\dsum}{\displaystyle\sum}
\newcommand{\dprod}{\displaystyle\prod}

\renewcommand{\tilde}{\widetilde}

\newcommand{\ang}[1]{\langle #1\rangle}

% géométrie torique

\newcommand{\LDelta}{\calL_\Delta}
\newcommand{\XDelta}{{X_\Delta}}

% polygone h-transverse

\newcommand{\bleft}{b_{\mathrm{left}}}
\newcommand{\bright}{b_{\mathrm{right}}}

\newcommand{\gmax}{g_{\max}}
\newcommand{\smax}{s_{\max}}

% démo invariants combinatoires

\newcommand{\muSDm}{\mu_S(\D,m)}
\newcommand{\muSSDm}{\mu_{S'}(\D,m)}

\newcommand{\muSDDmk}{\mu_S(\D',m'_0)}
\newcommand{\muSDDmkk}{\mu_S(\D',m'_1)}
\newcommand{\muSDDmkkk}{\mu_S(\D',m'_2)}
\newcommand{\muSSDDmk}{\mu_{S'}(\D',m'_0)}
\newcommand{\muSSDDmkk}{\mu_{S'}(\D',m'_1)}
\newcommand{\muSSDDmkkk}{\mu_{S'}(\D',m'_2)}

\newcommand{\muSDmk}{\mu_S(\D,m_0)}
\newcommand{\muSDmkk}{\mu_S(\D,m_1)}
\newcommand{\muSDmkkk}{\mu_S(\D,m_2)}
\newcommand{\muSDmmk}{\mu_S(\D,m'_0)}
\newcommand{\muSDmmkk}{\mu_S(\D,m'_1)}
\newcommand{\muSDmmkkk}{\mu_S(\D,m'_2)}
\newcommand{\muSSDmk}{\mu_{S'}(\D,m_0)}
\newcommand{\muSSDmkk}{\mu_{S'}(\D,m_1)}
\newcommand{\muSSDmkkk}{\mu_{S'}(\D,m_2)}
\newcommand{\muSSDmmk}{\mu_{S'}(\D,m'_0)}
\newcommand{\muSSDmmkk}{\mu_{S'}(\D,m'_1)}
\newcommand{\muSSDmmkkk}{\mu_{S'}(\D,m'_2)}

%%%%%%%%%%%%%%%%%%%%%%%%%%%%%%
%%% RACCOURCI ANGLAIS/FRANÇAIS
%%%%%%%%%%%%%%%%%%%%%%%%%%%%%%

\newcommand{\ie}{i.e. }

\title{Combinatorial Göttsche-Schroeter invariants in any genus}
\author{Gurvan Mével}
\address{Université de Genève, Section de Mathématiques, rue du Conseil-Général 7-9, 1205 Genève, Suisse}
\email{gurvan.mevel@unige.ch}

\subjclass[2020]{Primary 14T15, 14T90 ; Secondary 05E14, 14N10, 05E14}

\begin{document}

\begin{abstract}
Göttsche-Schroeter invariants are a genus $0$ extension of Block-Göttsche invariants. They interpolate between Welschinger invariants involving pairs of complex conjugated points and genus $0$ descendant Gromov-Witten invariants. They can be computed by a floor diagram algorithm.

In this paper, we show that this floor diagrams recipe actually leads to some invariants in any genus. This generalizes Göttsche-Schroter invariant in higher genus in a combinatorial way. We then prove some polynomiality result and establish a link with invariants defined by Shustin and Sinichkin. 
We provide many examples. In particular, we conjecture that these combinatorial invariants satisfy the Abramovich-Bertram formula.

\end{abstract}

\maketitle

\tableofcontents

\section{Introduction}

\subsection{Enumerative geometry}

Consider $X$ a complex algebraic and non-singular surface, and let $\calL$ be a sufficiently ample line bundle over $X$. We define curves on $X$ as the zero-sets of sections of $\calL$. Given a non-negative integer $\delta$, let $N^\delta(\calL)$ be the number of irreducible curves on $X$ with $\delta$ nodes passing through $\frac{\calL^2 + c_1(X)\cdot\calL}{2}-\delta$ points in generic position. This number is known as a Severi degree. It does not depend on the points configuration as long as it is generic. Because of the adjunction formula, we could consider the dual problem of determining $N_g(\calL)$ the number of curves on $X$ of genus $g$ and passing through $c_1(X)\cdot\calL-1+g$ points. This number corresponds to some Gromov-Witten invariant.

In a real setting, these counts are not invariants as they depend on the configuration of points we choose. However, a genus $0$ real counterpart has been highlighted by Welschinger \cite{welschinger_invariants_2005}. 
He showed that on some surfaces, counting curves passing through a configuration of real points with signs $\pm 1$ leads to an invariant.
More generally, when choosing the configuration of points one can pick $s$ pairs of complex conjugated points. The number of curves passing through the real configuration of points and counted with signs again does not depend on the configuration itself, as long as it has the appropriate number of points and is generic.

It is difficult in general to compute these numbers. It was not before the end of the XXth century that recursive formulas for the complex enumeration have been proven \cite{kontsevich_gromov-witten_1994,caporaso_counting_1998}. The behavior of these counts when the line bundle varies have also been studied, see \cite{di_francesco_quantum_1995, gottsche_conjectural_1998, fomin_labeled_2010, tzeng_proof_2012}  for instance.
Let us last mention that the rational Severi degrees of the Hirzebruch surfaces $\FF_0$ and $\FF_2$ satisfy the Abramovich-Bertram formula \cite{abramovich-2001-formula}. This result has been generalized by Vakil in any genus \cite{vakil-2000-counting}.

\subsection{The tropical approach} \label{sec-trop-enumgeom}

The emergence of tropical geometry provided new ways to compute these numbers. A significant breakthrough is Mikhalkin's correspondence theorem \cite{mikhalkin_enumerative_2005} that turns counts of algebraic curves on toric surfaces into counts of tropical curves with some multiplicities. He also gives a version of his correspondence theorem suitable to determine the Welschinger invariants when $s=0$, \ie when there is no pair of complex conjugated points in the configuration. This has been extended by Shustin \cite{shustin-2006-tropical} to the case $s\geq1$.
Following Mikhalkin's correspondence theorem, Brugallé and Mikhalkin reduced the enumeration of tropical curves to the enumeration of floor diagrams with some multiplicities \cite{brugalle_enumeration_2007, brugalle_floor_2008}.

Through this tropical approach, one can recover some results or prove new ones regarding the enumerative problems we are intested in. For instance, Franz and Markwig gave a tropical proof of the Abramovich-Bertram formula \cite{franz-2011-tropical}. 
Brugallé and Markwig generalized the Abramovich-Bertram and Vakil's formulas to the Hirzebruch surfaces $\FF_n$ and $\FF_{n+2}$, by working in the tropical world and using a correspondence theorem \cite{brugalle-2016-deformation}.

\subsection{Refined invariants}

In the tropical enumeration, Block and Göttsche proposed to use a refined multiplicity, which is no longer an integer but a symmetric Laurent polynomial in a formal variable $q$ \cite{block_refined_2016}. Itenberg and Mikhalkin showed that the count with Block-Göttsche multiplicities also leads to an invariant \cite{itenberg_block-gottsche_2013}, known as the Block-Göttsche invariant and denoted by $G_g(\Delta)(q)$, where $g$ is the genus and $\Delta$ is the polygon which defines the toric surface we look at. Tropical refined invariants have the property to interpolate between complex and real enumeration of curves : plugging $q=1$ we get Gromov-Witten invariant, and plugging $q=-1$ we get tropical Welschinger invariant.

In the rational case, Göttsche and Schroeter extended Block-Göttsche invariants and  defined a refined broccoli invariant now taking into account the number $s$ of pairs of complex conjugated points we fix in the points configuration \cite{gottsche_refined_2019}. These invariants are denoted by $G_0(\Delta,s)(q)$ and correspond to Block-Göttsche invariants for $s=0$. It now interpolates between the broccoli invariants of \cite{gathmann-2013-broccoli}, \ie Welschinger invariants involving pairs of complex conjugated points, and genus $0$ descendant Gromov-Witten invariants.  Göttsche-Schroeter invariants appeared to be a particular case of some invariants defined by Blechman and Shustin \cite{blechman-2019-refined}.
Schroeter and Shustin generalized Göttsche-Schroeter invariants to genus 1 \cite{schroeter-2018-refined}. Simultaneously and independently with this paper, Shustin and Sinichkin proposed a generalization of the work of \cite{schroeter-2018-refined} to any genus \cite{shustin-2024-refined}. They also showed that the evaluation at $q = 1$ gives the number of curves satisfying some incidence and tangency conditions.

%\bigskip

The computation of the tropical refined invariants is possible using the floor diagram algorithm, adapted to the refined setting by Block and Göttsche \cite{block_refined_2016}. With an additional decoration called pairing, the floor diagrams can also be used to compute the broccoli invariants in genus $0$ from \cite{gottsche_refined_2019}, see \cite{brugalle_polynomiality_2022}. In particular, the existence of refined invariants ensures that the diagram count does not depend on the chosen pairing. 
Using floor diagrams, Bousseau has shown that Block-Göttsche invariants satisfy the Abramovich-Bertram formula \cite{bousseau_refined_2021}, settling a conjecture of \cite{brugalle-2020-invariance}.

\subsection{Results of this paper}

The calculation of Göttsche-Schroeter invariants $G_0(\Delta,s)$ using floor diagrams requires to choose a pairing $S$ of order $s$, see section \ref{subsec-refined-invariants}. However, the Göttsche-Schroeter invariant \emph{does not} depend on the choice of this pairing, as long as it has order $s$ by \cite[theorem 2.13]{brugalle_polynomiality_2022}, stated as theorem \ref{theo-GS-inv-formula} here. Namely, if $S$ and $S'$ are two pairings of order $s$, one can define the count of floor diagrams $G_0(\Delta,S)$ and $G_0(\Delta,S')$, and show they are both equal to the Göttsche-Schroeter invariant $G_0(\Delta,s)$ (this last notation is then an abuse of notation).

We give in this paper a combinatorial proof of this independence which is valid in any genus, not only in the rational case. For any genus $g$ we define a quantity $G_g(\Delta,S)$ as a count of floor diagrams, and show it does not depend on $S$ but only of its order.

\begin{named}{Theorem \ref{theo-invS}}
Let $\Delta$ be $h$-transverse polygon and $g\in\N$. Let $s \in \N$ and $S,S'$ be two pairings of order $s$. Then $G_g(\Delta,S) = G_g(\Delta,S')$. We can then write $G_g(\Delta,s)$ and call it \emph{Göttsche-Schroeter invariant of genus $g$}.
\end{named}

As wished is \cite[remark 2.14]{brugalle_polynomiality_2022} the proof is entirely combinatorial and does not go through tropical geometry.  
%It consists in a study of several cases according to how the pairings $S$ and $S'$ interact with a floor diagram.
Moreover, in the case where the polygon $\Delta$ is $h$-transverse we show that the invariants of \cite{shustin-2024-refined}, denoted by $RB_q(\Delta,g,(n_1,n_2))$, match the ones of this paper. Hence, the floor diagrams algorithm gives a practical way to study and compute the invariants of \cite{shustin-2024-refined}.

\begin{named}{Proposition \ref{prop-equal-shustin}}
Let $\Delta$ be a $h$-transverse polygon, $g\in\N$ and $s\in\N$. The combinatorial Göttsche-Schroeter invariant corresponds to the invariant of \cite{shustin-2024-refined}, \ie
\[ G_g(\Delta,s)(q) = RB_q(\Delta,g,(y(\Delta)-1+g-2s,s)) . \]
\end{named}

We then illustrate the use of floor diagrams by proving few results on this higher genus Göttsche-Schroeter invariant $G_g(\Delta,s)$. These properties extend the ones we can find in \cite{brugalle_polynomiality_2022}. Especially we show some polynomiality behavior with respect to $s$, which generalizes \cite[theorem 1.7]{brugalle_polynomiality_2022} to arbitrary genus. Here, $\< G_g(\Delta,s) \>_i$ denotes the codegree $i$ coefficient of $G_g(\Delta,s)$. Other notations are defined in sections \ref{subsec-polygons-floor-diagrams} and \ref{subsec-refined-invariants}.

\begin{named}{Theorem \ref{theo-GSinv-poly-s}}
Let $\Delta$ be a $h$-transverse polygon and $g \leq \gmax(\Delta)$. If $2i \leq e^{-\infty}(\Delta)$ and $i \leq \gmax(\Delta)$, then the values $\< G_g(\Delta,s) \>_i$ for $0 \leq s \leq \smax(\Delta,g)$ are interpolated by a polynomial of degree $i$, whose leading coefficient is $\frac{(-2)^i}{i!} \binom{g_{\max}-i}{g}$.
\end{named}

We also perform computations on manageable examples. This leads to few conjectures that may give evidence that this combinatorial invariant may have a geometric interpretation. In particular the higher genus Göttsche-Schroeter invariants seem to satisfy the Abramovich-Bertam formula. Here, the polygon $\Delta^n_{a,b}$ defines the Hirzebruch surface $\FF_n$ together with the curves of bidegree $(a,b)$, see figure \ref{fig-trapeze-Hirz}.

\begin{named}{Conjecture \ref{conj-AB}, Abramovich-Bertram formula}
    Let $a,b \in\N$ and $g\geq 0$. For any $s\geq0$ one has
    \[ G_g(\Delta^0_{a,a+b},s) = \dsum_{j=0}^a \binom{b+2j}{j} G_g(\Delta^2_{a-j,b+2j},s).  \]
\end{named}

\addtocontents{toc}{\SkipTocEntry}
\subsection*{Acknowledgments}

I would like to thank Erwan Brugallé for asking me this question and many others three years ago, for his constant support since then, and for his help with some parts of this paper.
I also thank Eugenii Shustin for drawing my attention to a paper I was unaware of. 

This work was mainly conducted within the France 2030 framework programme, Centre Henri Lebesgue ANR-11-LABX-0020-01.
I acknowledge financial support from the CNRS and from the Swiss National Science Foundation grant 204125.

\section{Floor diagrams and refined invariants in genus $0$} \label{subsec-polydiag}

In this section we recall how to use floor diagrams to genus $0$ compute refined invariants.

\subsection{$h$-transverse polygons and floor diagrams} \label{subsec-polygons-floor-diagrams}

We first introduce some definitions and notations.
In this paper a polygon will always be a convex polygon in $\R^2$ with vertices in $\Z^2$.

\begin{defi} \label{def-htransverse}
Let $\Delta$ be a polygon. We said that $\Delta$ is \emph{$h$-transverse} if any of its edge has an outward normal vector of the form $(0,\pm 1)$ or $(\pm 1,n)$ for some $n\in\Z$.
\end{defi}

Via toric geometry, a polygon $\Delta$ defines a toric surface $\XDelta$ and a line bundle $\LDelta$ on $\XDelta$. 
It also has some combinatorial data that is related to the enumerative problems we are interested in throughout this text. We set the following notations :
\begin{itemize}
	\item $a(\Delta)$ is the height of $\Delta$, \ie the difference between the maximal and the minimal ordinate of a point of $\Delta$,
	
	\item $e^{+\infty}(\Delta)$ (resp. $e^{-\infty}(\Delta)$) is the length of the top (resp. bottom) horizontal edge of $\Delta$.
 
  	\item $y(\Delta) = |\partial \Delta \cap \Z^2|$ the number of integer points on the boundary of $\Delta$, geometrically it is equal to $- \calL_\Delta \cdot K_{X_\Delta}$, %\gadd{and $y(\Delta)-1 = y(\Delta)-1$ } \gnote{$y(\Delta)-1$ peut-être pas nécessaire} 

  	\item $\chi(\Delta)$ is the number of vertices of $\Delta$, geometrically it is the Euler characteristic of $X_\Delta$,

   \item $\gmax(\Delta) = |\mathring\Delta \cap\Z^2|$ the number of interior lattice points of $\Delta$, geometrically it is the maximal genus of a curve in the linear system associated to $\LDelta$ if $X_\Delta$ is non-singular,
  	
  	\item $\smax(\Delta,g) = \left\lfloor \dfrac{y(\Delta)-1+g}{2} \right \rfloor$ for $g\in\N$.
\end{itemize}
Note that $y(\Delta) = e^{+\infty}(\Delta) + e^{-\infty}(\Delta) + 2a(\Delta)$.
Moreover, if $\Delta$ is $h$-transverse we denote :
\begin{itemize}
	\item %$L(\Delta)$ (resp. $R(\Delta)$) is the unordered list of integers $k\in\Z$ appearing $j$ times, with $j$ maximal such that $j(k,-1)$ is the translation of a edge of the left (resp. right) side of $\Delta$.
    $\bleft(\Delta)$ (resp. $\bright(\Delta)$) is the unordered list of integers $k$ appearing $j$ times, where $j$ is the integral length of the side of $\Delta$ having $(-1,k)$ (resp. $(1,k)$) as outward normal vector. 
\end{itemize}
When no ambiguity is possible we will simply use $a$, $e^{+\infty}$, $\gmax$, etc.

\begin{ex}
\begin{figure}[h!]
	\begin{subfigure}[t]{0.33\textwidth}
	\centering
	\begin{tikzpicture}[scale=2/3] 
	
	\foreach \p in {(0,0), (1,0), (2,0), (3,0), (0,1), (1,1), (2,1), (0,2), (1,2), (0,3)}
			{\draw node at \p {$\bullet$} ;}
	\draw (0,0) to (3,0) to (0,3) to cycle;
		
	\end{tikzpicture}
	\caption{$\Delta_a$.}
	\label{fig-ex-poly-d3}
\end{subfigure}
\begin{subfigure}[t]{0.33\textwidth}
	\centering
	\begin{tikzpicture}[scale=2/3] 
	
	\foreach \p in {(0,0), (1,0), (2,0),
                    (0,1), (1,1), (2,1), (3,1), (4,1),
                    (0,2), (1,2), (2,2), (3,2), (4,2),
                    (1,3), (2,3), (3,3), (4,3),
                    (2,4), (3,4)}
			{\draw node at \p {$\bullet$} ;}
	\draw (0,0) to (2,0) to (4,1) to (4,3) to (3,4) to (2,4) to (0,2) to cycle;
		
	\end{tikzpicture}
	\caption{$\Delta_b$.}
	\label{fig-expolyb}
\end{subfigure}
\begin{subfigure}[t]{0.32\textwidth}
	\centering
	\begin{tikzpicture}[scale=2/3] 
	
	\foreach \p in {(0,0), (1,0), (2,0), (1,1), (2,1), (3,1), (1,2), (2,2), (3,2), (1,3)}
			{\draw node at \p {$\bullet$} ;}
	\draw (0,0) to (2,0) to (3,1) to (3,2) to (1,3) to cycle;
		
	\end{tikzpicture}
	\caption{$\Delta_c$.}
	\label{fig-expolyc}
\end{subfigure}
	\caption{Some polygons.}
	\label{fig-expoly}
\end{figure}
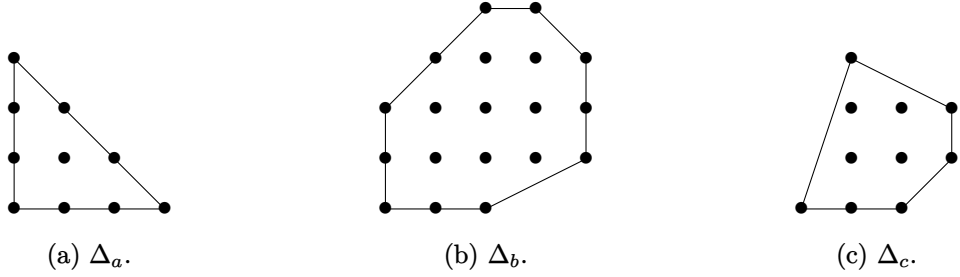

Consider the polygons of figure \ref{fig-expoly}. The polygons $\Delta_a$ and $\Delta_b$ are $h$-transverse but $\Delta_c$ is not. We give in table \ref{table-ex-polygons} their combinatorial data.

\begin{table}[h!]
\renewcommand{\arraystretch}{1.5}
\centering
\begin{tabular}{|c||c|c|c|c|c|c|c|c|}
\hline 
 & $a(\Delta)$ & $e^{+\infty}(\Delta)$ & $e^{-\infty}(\Delta)$ & $y(\Delta)$ & $\chi(\Delta)$ & $\gmax(\Delta)$ & $\bleft(\Delta)$ & $\bright(\Delta)$ \\
 \hline
 \hline
$\ \Delta_a\ $ & $3$ & $0$ & $3$ & $9$ & $3$ & $1$& $\{ 0,0,0 \}$ & $\{ 1,1,1 \}$  \\
\hline 
$\ \Delta_b\ $ & $4$ & $1$ & $2$ & $11$ & $7$ & $8$ & $\{ 0,0,1,1 \}$ & $\{ -2,0,0,1 \}$ \\
\hline
$\ \Delta_c\ $ & $3$ & $0$ & $2$ & $6$ & $5$ & $4$ & $\not$ & $\not$ \\
\hline 
\end{tabular}
	\caption{Combinatorial data of the polygons of figure \ref{fig-expoly}.}
\label{table-ex-polygons}
\end{table}
\end{ex}

We now introduce some terminology on graphs.
An \textit{oriented graph} $\Gamma$ is a collection of vertices $V(\Gamma)$, of bounded edges $E^0(\Gamma)$ (\ie of oriented edges adjacent to two vertices), and of infinite edges $E^{\infty}(\Gamma)$ (\ie of oriented edges adjacent to one vertex).
An infinite edge oriented toward (resp. from) its adjacent vertex is called a \emph{source} (resp. a \emph{sink}), and we denote by $E^{-\infty}(\Delta)$ the set of sources (resp. by $E^{+\infty}(\Gamma)$ the set of sinks). We denote by $E(\Gamma)$ the set of all edges of $\Gamma$. 
The graph $\Gamma$ is \textit{weighted} if there is a function $w : E(\Gamma) \to \N^*$. Given a vertex $v\in V(\Gamma)$ of an oriented weighted graph, its \textit{divergence} $\div(v)$ is the difference of the weights entering and leaving $v$, \ie
\[ \div(v) = \dsum_{ \overset{e}{\to} v} w(e) - \dsum_{v \overset{e}{\to}} w(e).\]
Last, the \emph{genus} of a graph $\Gamma$ is its first Betti number.

\begin{defi}[Floor diagram] \label{def-floordiag}
Let $\Delta$ be a $h$-transverse polygon and $g\in\N$. A \emph{floor diagram} $\D$ with Newton polygon $\Delta$ and genus $g$ is a quadruple $(\Gamma,w,L,R)$ such that 
\begin{itemize}
	\item $(\Gamma,w)$ is a weighted, connected, oriented and acyclic graph of genus $g$,
 
    \item the graph $\Gamma$ has $a(\Delta)$ vertices, $e^{+\infty}(\Delta)$ sinks and $e^{-\infty}(\Delta)$ sources,
	
	\item all the infinite edges have weight 1,
	
	\item $L : V(\Gamma) \to \bleft(\Delta)$ and $R : V(\Gamma) \to \bright(\Delta)$ are bijections such that for every vertex $v \in V(\Gamma)$ one has $\div(v) = L(v) + R(v)$. 
\end{itemize}
\end{defi}

By abuse of notations, we will use $\D$ for $\Gamma$. If $\D$ is a floor diagram its number of elements $n(\D)$ is its number of vertices and edges, \ie 
\[ n(\D) = |V(\D)| + |E(\D) |. \]
Since one has $|E(\D)| = |E^0(\D)| + |E^\infty(\D)|$, $|V(\D)| - |E^0(\D)| = 1-g$ with $g$ the genus of $\D$, and $|V(\D)| = a(\Delta)$ with $\Delta$ the Newton polygon of $\D$, then 
\[ n(\D) = y(\Delta)-1+g . \]
The \emph{degree} of $\D$ is 
    \[ \deg(\D) = \dsum_{e\in E(\D)} (w(e)-1) . \]
If the diagram $\D$ has Newton polygon $\Delta$ and genus $g$, its \emph{codegree} is
    \[ \codeg(\D) = \gmax(\Delta) -g-\deg(\D) . \]

We will always draw the floor diagrams oriented from bottom to top. Hence we do not put any arrow on the edges to show the orientation. Moreover we indicate the weights of the edges only if their are at least $2$. We give some examples of floor diagrams.

\begin{ex}
Figure \ref{fig-ex-diag-d3} gives all the floor diagrams with Newton polygon the polygon of figure \ref{fig-ex-poly-d3}. Here, the functions $R$ and $L$ are constant equal to $1$ and $0$, so any vertex has divergence $1$. The first three diagrams have genus 0, and the last one has genus 1. We also precise their codegrees.

\begin{figure}[h!]
	\begin{subfigure}[t]{0.24\textwidth}
	\centering
	\begin{tikzpicture}[scale=5/6]
		\floor (3) at (0,4.5) {} ;
		\floor (2) at (0,3) {} ;
		\floor (1) at (-0.3,1.5) {} ;
		
		\draw (3) to (2) ;
		\draw (2) to[out=-115, in=90] (1) ;
		
		\draw (1) to[out=-115, in=90] (-0.6,0);
		\draw (1) to[out=-65, in=90] (0,0);
		\draw (2) to[out=-65, in=90] (0.6,0);
	\end{tikzpicture}
	\caption{$g=0$, \\ $\codeg(\D) = 1$}
    \label{fig-ex-diag-d3-a}
\end{subfigure}
\begin{subfigure}[t]{0.24\textwidth}
	\centering
	\begin{tikzpicture}[scale=5/6]
		\floor (3) at (0,4.5) {} ;
		\floor (2) at (0,3) {} ;
		\floor (1) at (0,1.5) {} ;
		
		\draw (3) to (2) ;
		\draw (2) to node[left] {\scriptsize $2$} (1) ;		
		
		\draw (1) to[out=-130, in=90] (-0.5,0);
		\draw (1) to[out=-90, in=90] (0,0) ;
		\draw (1) to[out=-50, in=90] (0.5,0);
		
	\end{tikzpicture}
	\caption{$g=0$, \\ $\codeg(\D) = 0$}
    \label{fig-ex-diag-d3-b}
\end{subfigure}
\begin{subfigure}[t]{0.24\textwidth}
	\centering
	\begin{tikzpicture}[scale=5/6]
		\floor (3) at (1,3) {} ;
		\floor (2) at (-1,3) {} ;
		\floor (1) at (0,1.5) {} ;
		
		\draw (3) to (1) ;
		\draw (2) to (1) ;
		
		\draw (1) to[out=-130, in=90] (-0.5,0);
		\draw (1) to[out=-90, in=90] (0,0) ;
		\draw (1) to[out=-50, in=90] (0.5,0);
	\end{tikzpicture}
	\caption{$g=0$, \\ $\codeg(\D) = 1$}
    \label{fig-ex-diag-d3-c}
\end{subfigure}
\begin{subfigure}[t]{0.24\textwidth}
	\centering
	\begin{tikzpicture}[scale=5/6]
		\floor (3) at (0,4.5) {} ;
		\floor (2) at (0,3) {} ;
		\floor (1) at (0,1.5) {} ;
		
		\draw (3) to (2) ;
		\draw (2) to[out=-60, in=60] (1) ;	
		\draw (2) to[out=-120, in=120] (1) ;		
		
		\draw (1) to[out=-130, in=90] (-0.5,0);
		\draw (1) to[out=-90, in=90] (0,0) ;
		\draw (1) to[out=-50, in=90] (0.5,0);
	\end{tikzpicture}
	\caption{$g=1$, \\ $\codeg(\D) = 0$}
    \label{fig-ex-diag-d3-d}
\end{subfigure}
	\caption{The floor diagrams with Newton polygon the polygon of figure \ref{fig-ex-poly-d3}.}
	\label{fig-ex-diag-d3}
\end{figure}
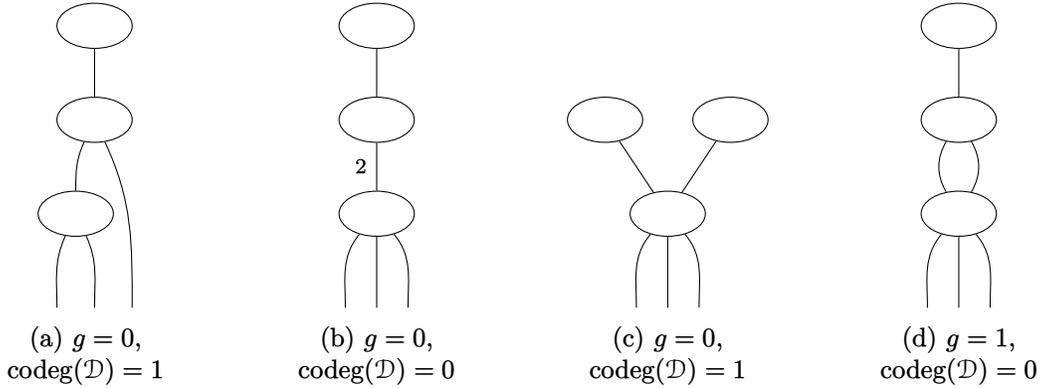
\end{ex}

\subsection{Refined invariants} \label{subsec-refined-invariants}

Following \cite{brugalle_polynomiality_2022}, we now recall how to determine the Göttsche-Schroeter invariants of \cite{gottsche_refined_2019} using floor diagrams.

The orientation of a floor diagram $\D$ induces a partial order $\prec$ on the set of its elements $E(\D) \cup V(\D)$. More precisely, given two elements $\alpha$ and $\beta$ we write $\alpha \prec \beta$ if there exists an oriented path in $\D$ from $\alpha$ to $\beta$.  Hence, one can define increasing functions on a floor diagram.

\begin{defi}[Marking]
Let $\D$ be a floor diagram. A \emph{marking} of $\D$ is an increasing bijection 
\[ m : E(\D) \cup V(\D) \to \{1,\dots,n(\D) \} \]
The couple $(\D,m)$ is called a \emph{marked floor diagram}.
 
 Two marked floor diagrams $(\D,m)$ and $(\D',m')$ are \emph{isomorphic} if there exists an isomorphism $\varphi : \D \to \D'$ of weighted graphs such that $L = L' \circ \varphi$, $R = R'\circ \varphi$ and $m=m' \circ \varphi$.
 
 We denote by $\nu(\D)$ the number of markings of a diagram $\D$ up to isomorphisms.
\end{defi}

\begin{ex}
Figure \ref{fig-exdiagmark} gives examples of markings of the floor diagram of figure \ref{fig-ex-diag-d3-a}. The marked floor diagrams of figures \ref{fig-exdiagmarka} and \ref{fig-exdiagmarkb} are isomorphic.

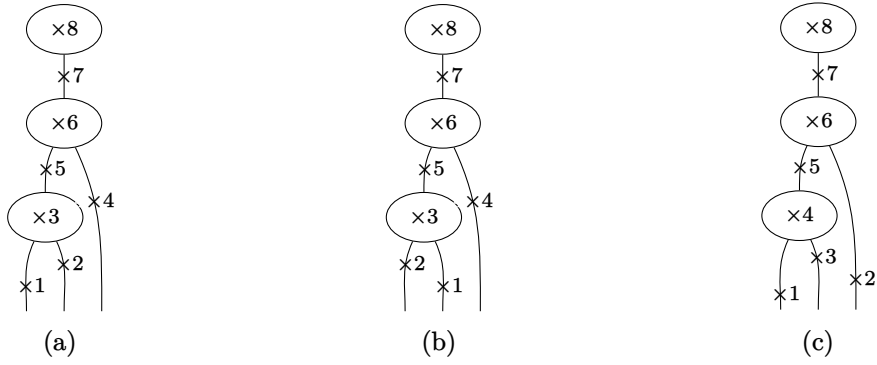
\begin{figure}[h!]
	\begin{subfigure}[t]{0.33\textwidth}
	\centering
	\begin{tikzpicture}[scale=5/6]
		\floor (8) at (0,4.5) {\scriptsize $\times8$};
		\floor (6) at (0,3) {\scriptsize $\times6$};
		\floor (3) at (-0.3,1.5) {\scriptsize $\times3$};
		\draw (8) to node[pos=0.5]{\scriptsize \textcolor{white}{7}$\times7$} (6) ;
		\draw (6) to[out=-115, in=90] node[pos=0.5]{\scriptsize \textcolor{white}{5}$\times5$} (3) ;
		\draw (3) to[out=-115, in=90] node[pos=2/3]{\scriptsize \textcolor{white}{1}$\times1$} (-0.6,0);
		\draw (3) to[out=-65, in=90] node[pos=1/3]{\scriptsize \textcolor{white}{2}$\times2$} (0,0);
		\draw (6) to[out=-65, in=90] node[pos=1/3]{\scriptsize \textcolor{white}{4}$\times4$} (0.6,0);
	\end{tikzpicture}
	\caption{}
	\label{fig-exdiagmarka}
\end{subfigure}
\begin{subfigure}[t]{0.3\textwidth}
	\centering
	\begin{tikzpicture}[scale=5/6]
		\floor (8) at (0,4.5) {\scriptsize $\times8$};
		\floor (6) at (0,3) {\scriptsize $\times6$};
		\floor (3) at (-0.3,1.5) {\scriptsize $\times3$};
		\draw (8) to node[pos=0.5]{\scriptsize \textcolor{white}{7}$\times7$} (6) ;
		\draw (6) to[out=-115, in=90] node[pos=0.5]{\scriptsize \textcolor{white}{5}$\times5$} (3) ;
		\draw (3) to[out=-115, in=90] node[pos=1/3]{\scriptsize \textcolor{white}{2}$\times2$} (-0.6,0);
		\draw (3) to[out=-65, in=90] node[pos=2/3]{\scriptsize \textcolor{white}{1}$\times1$} (0,0);
		\draw (6) to[out=-65, in=90] node[pos=1/3]{\scriptsize \textcolor{white}{4}$\times4$} (0.6,0);
	\end{tikzpicture}
	\caption{}
	\label{fig-exdiagmarkb}
\end{subfigure}	
\begin{subfigure}[t]{0.33\textwidth}
	\centering
	\begin{tikzpicture}[scale=5/6]
		\floor (8) at (4,4.5) {\scriptsize $\times8$};
		\floor (6) at (4,3) {\scriptsize $\times6$};
		\floor (3) at (3.7,1.5) {\scriptsize $\times4$};
		\draw (8) to node[pos=0.5]{\scriptsize \textcolor{white}{7}$\times7$} (6) ;
		\draw (6) to[out=-115, in=90] node[pos=0.5]{\scriptsize \textcolor{white}{5}$\times5$} (3) ;
		\draw (3) to[out=-115, in=90] node[pos=4/5]{\scriptsize \textcolor{white}{1}$\times1$} (3.4,0);
		\draw (3) to[out=-65, in=90] node[pos=1/4]{\scriptsize \textcolor{white}{3}$\times3$} (4,0);
		\draw (6) to[out=-65, in=90] node[pos=5/6]{\scriptsize \textcolor{white}{2}$\times2$} (4.6,0);
	\end{tikzpicture}
	\caption{}
	\label{fig-exdiagmarkc}
\end{subfigure}
	\caption{Some marked floor diagrams with Newton polygon the polygon of figure \ref{fig-ex-poly-d3}.}
	\label{fig-exdiagmark}
\end{figure}
\end{ex}

 A \emph{pairing of order $s$} of the set $\{1,\dots,n\}$ is a set $S$ of $s$ disjoint pairs $\{i,i+1\} \subset \{1,\dots,n\}$. 
 Given a floor diagram $\D$ and a pairing $S$ of $\{1,\dots,n(\D)\}$, we say that a marking $m$ is \emph{compatible} with $S$ if for any $\alpha \in S$, the set $m^{-1}(\alpha)$ consists of
 \begin{itemize}
 	\item either an edge and an adjacent vertex,
 	\item or two edges that are both entering or both leaving the same vertex.
 \end{itemize}
Let $(\D,m)$ be a marked floor diagram and $S$ a pairing compatible with $m$. We define
\begin{align*}
E_0 &= \{e \in E(\D)\ |\ \forall \alpha\in S, e \notin m^{-1}(\alpha) \}, \\
E_1 &= \{e \in E(\D)\ |\ \exists v\in V(\D), \exists \alpha\in S, \{e,v\} = m^{-1}(\alpha) \} , \\
E_2 &= \{\{e,e'\} \subset E(\D)\ |\ \exists \alpha\in S, \{e,e'\} = m^{-1}(\alpha) \}. \\
\end{align*}

For $n\in\Z$ the quantum integer $[n](q)$ is defined by
\[ [n](q) = \dfrac{q^{n/2}-q^{-n/2}}{q^{1/2}-q^{-1/2}} = q^{(n-1)/2} + q^{(n-3)/2} + \cdots + q^{-(n-3)/2} + q^{-(n-1)/2} \in \N[q^{\pm 1/2}] . \]
We will use the shorcuts
\[ [n]^2 = [n](q)^2 \text{ and } [n]_2 = [n](q^2).  \]

\begin{defi}[Refined $S$-multiplicity] \label{def-mult-GS}
The \emph{refined $S$-multiplicity} of a marked floor diagram $(\D,m)$ is
\[ \mu_S(\D,m)(q) = \dprod_{e\in E_0} [w(e)]^2 \dprod_{e\in E_1} [w(e)]_2 \dprod_{\{e,e'\} \in E_2} \dfrac{[w(e)][w(e')][w(e)+w(e')]}{[2]} \in \Z[q^{\pm 1/2}] \]
if $S$ and $m$ are compatible, and $\mu_S(\D,m)(q)=0$ otherwise. If non-zero, it is a Laurent polynomial of degree $\deg(\D)$.
\end{defi}

The following theorem can be taken as a definition of the Göttsche-Schroeter invariants.

\begin{theo}[{\cite[theorem 2.13]{brugalle_polynomiality_2022}}]\label{theo-GS-inv-formula}
Let $\Delta$ be a $h$-transverse polygon and $s\in \{0,\dots,\smax(\Delta,0)\}$. For any pairing $S$ of order $s$ of $\{1,\dots, y(\Delta)-1 \}$ one has
\[ G_0(\Delta,s) = \dsum_{(\D,m)} \mu_S(\D,m)  \]
where the sum runs over the isomorphism classes of marked floor diagrams with Newton polygon $\Delta$ and genus $0$.
\end{theo}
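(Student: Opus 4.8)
The plan is to deduce the formula from the tropical definition of the Göttsche-Schroeter invariant together with the floor decomposition technique of Brugallé and Mikhalkin, following the strategy of \cite{brugalle_polynomiality_2022}. Recall that $G_0(\Delta,s)(q)$ is defined in \cite{gottsche_refined_2019} as a refined count of rational tropical curves of degree $\Delta$ through a generic configuration $\omega$ made of $y(\Delta)-1-2s$ ordinary points and $s$ pairs of complex conjugated points; in particular one needs $s\leq\smax(\Delta,0)$ so that $y(\Delta)-1-2s\geq 0$. Each tropical curve $C$ is counted with a refined multiplicity which is the product of the Block-Göttsche weights $[m_V](q)$ over the vertices $V$ of $C$ not constrained by a conjugated pair, times a modified local contribution at the part of $C$ pinned by each conjugated pair. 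The first input is that, by \cite{gottsche_refined_2019}, this count is unchanged under a generic deformation of $\omega$, so that it depends only on the number $s$ of conjugated pairs and not on their location.

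Next I would specialize $\omega$ to a vertically stretched configuration in which the $s$ conjugated pairs sit at $s$ disjoint pairs of consecutive heights $\{i,i+1\}\subset\{1,\dots,y(\Delta)-1\}$; this collection of pairs of heights is precisely the datum of a pairing $S$ of order $s$, and every such $S$ occurs this way. By Mikhalkin's correspondence theorem \cite{mikhalkin_enumerative_2005} and the floor algorithm \cite{brugalle_enumeration_2007,brugalle_floor_2008}, the tropical curves through a stretched configuration are in bijection with isomorphism classes of marked floor diagrams $(\D,m)$ with Newton polygon $\Delta$ and genus $0$, where the marking records, by height, which elevator or floor meets which of the $y(\Delta)-1$ points; an elevator of weight $w$ corresponds to an edge $e$ with $w(e)=w$. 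The geometry of the part of $C$ sitting at the two heights of a conjugated pair $\alpha$ is then either an elevator together with an adjacent floor, or two elevators incident to a common floor and on the same side of it — exactly the dichotomy ``edge together with an adjacent vertex'' versus ``two edges both entering or both leaving a vertex'' in the definition of compatibility of $m$ with $S$; and when $m$ is not compatible with $S$, no tropical curve through the stretched configuration realises $\alpha$ as a genuine conjugated pair, matching the convention $\mu_S(\D,m)=0$.

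It then remains to compare multiplicities, and this is the main obstacle. For a point outside every conjugated pair the corresponding elevator of weight $w$ creates two trivalent vertices of $C$, each of Block-Göttsche weight $[w](q)$, producing the factor $[w(e)]^2$ of the $E_0$ product. For a conjugated pair whose two marks land on an edge $e$ and an adjacent vertex, the modified local contribution of that piece of $C$ must be shown to equal $[w(e)](q^2)=[w(e)]_2$, the $E_1$ factor; when the two marks land on two edges $e,e'$ entering (or leaving) a common vertex, it must be shown to equal $\frac{[w(e)][w(e')][w(e)+w(e')]}{[2]}$, the $E_2$ factor. Establishing these two local identities — by unwinding the refined multiplicity of \cite{gottsche_refined_2019} on a single floor with its incident elevators and tracking which perturbations of the stretched configuration are allowed when one point becomes a genuine conjugated pair — is the delicate step; note $[w](q^2)\neq[w](q)^2$ in general, which is precisely why the pairing shows up in the formula. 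Granting this, multiplying the local contributions over all points and summing over marked floor diagrams yields $\dsum_{(\D,m)}\mu_S(\D,m)$, which by the deformation invariance of the first paragraph equals $G_0(\Delta,s)$ for every pairing $S$ of order $s$.
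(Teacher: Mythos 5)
The paper does not actually prove this statement: it is imported verbatim as \cite[theorem 2.13]{brugalle_polynomiality_2022}, and the author explicitly remarks that it ``can be taken as a definition'' of $G_0(\Delta,s)$. So there is no in-paper proof to compare yours against; the relevant comparison is with the argument in the cited reference, whose strategy (tropical definition of the refined broccoli invariant, deformation invariance of the point configuration, vertical stretching, floor decomposition, local matching of multiplicities) is indeed the one you outline.

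As a standalone proof, however, your proposal has a genuine gap, and you name it yourself: the two local identities asserting that the modified refined contribution of the piece of tropical curve pinned by a conjugated pair equals $[w(e)]_2$ in the edge--vertex case and $\frac{[w(e)][w(e')][w(e)+w(e')]}{[2]}$ in the two-edge case are exactly the content of the theorem, and you write ``Granting this'' rather than establishing them. Everything before that point (deformation invariance, the bijection between stretched tropical curves and marked floor diagrams, the dichotomy matching the compatibility condition, the $[w(e)]^2$ factor for unpaired elevators) is standard and correctly assembled, but without the local computations the argument is a plan, not a proof. A second, smaller point that would need care if you carried the plan out: you assert that an incompatible marking corresponds to no tropical curve realising the pair as a genuine conjugated pair; this requires an argument about which degenerations of the stretched configuration are achievable, and in \cite{brugalle_polynomiality_2022} it is part of the same local analysis you defer. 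If you intend this as a citation-level justification rather than a proof, it would be cleaner to do what the paper does and simply quote the result.
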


\begin{rk}
The theorem implies that the right-hand side does not depend on the pairing $S$ as long as it has order $s$. Thus, to study $G_0(\Delta,s)$ we can choose a particular pairing which makes the calculations easier.

This paper is mainly devoted to prove that we can define an analogous combinatorial quantity for any genus, see theorem \ref{theo-invS}. More precisely, we will give a combinatorial proof that, in any genus $g$, the sum of the right hand side of theorem \ref{theo-GS-inv-formula} does not depend on $S$, leading to a quantity we will denote $G_g(\Delta,s)$.
\end{rk}

\begin{ex} \label{ex-calcul-G0-Delta3}
    Let $\D_1$, $\D_2$ and $\D_3$ be the diagrams of figures \ref{fig-ex-diag-d3-a}, \ref{fig-ex-diag-d3-b} and \ref{fig-ex-diag-d3-c}. The following table \ref{table-ex-refined-inv} gives their contributions to the Göttsche-Schroeter invariant, using the pairing $S = \{\{1,2\},\dots,\{2s-1,2s\}\}$ of order $s$. Hence one has $G_0(\Delta_a,s) = q+(10-2s)+q^{-1}$.

\begin{table}[h!]
\renewcommand{\arraystretch}{1.5}
\centering
\begin{tabular}{|c||c|c|c|c|c|c|c|c|}
\hline 
 & $s=0$ & $s=1$ & $s=2$ & $s=3$ & $s=4$ \\
 \hline
 \hline
$\ \D_1\ $ & $5$ & $3$ & $1$ & $1$ & $1$ \\
\hline 
$\ \D_2\ $ & $q+2+q^{-1}$ & $q+2+q^{-1}$ & $q+2+q^{-1}$ & $q+q^{-1}$ & $q+q^{-1}$  \\
\hline
$\ \D_3\ $ & $3$ & $3$ & $3$ & $3$ & $1$ \\
\hline 
 \hline
$\ G_0(\Delta_a,s) \ $ & $q+10+q^{-1}$ & $q+8+q^{-1}$ & $q+6+q^{-1}$ & $q+4+q^{-1}$ & $q+2+q^{-1}$ \\
\hline 
\end{tabular}
\caption{Computation of $G_0(\Delta_a,s)$.}
\label{table-ex-refined-inv}
\end{table}
\end{ex}

\begin{rk} 
A \emph{lattice preserving transformation} is an application $f : \R^2 \to \R^2$ obtained as a composition of
\begin{itemize}
    \item isomorphisms of $\R^2$ induced by elements of $\GL_2(\Z)$,
    \item translations that preserves the lattice $\Z^2$, \ie translations by a vector $u\in\Z^2$.
\end{itemize} 
In other words, a lattice preserving transformation is an element of the affine group of $\R^2$ for which the lattice $\Z^2$ is invariant.  
We say that  $\Delta$ and $\Delta'$ are \emph{congruent} if there exists a lattice preserving transformation $f$ such that $\Delta' = f(\Delta)$. If $\Delta$ and $\Delta'$ are congruent, then $G_0(\Delta,s) = G_0(\Delta',s)$.
Indeed, a translation does not change the family of floor diagrams defined by $\Delta$. Moreover, a floor diagram is a way to encode a tropical curve $C$. Via the dual subdivision of $\Delta$ corresponding to $C$, a matrix of $\GL_2(\Z)$ which acts on $\Delta$ also acts on $C$, and preserves its multiplicity. Hence the total count does not change.
\end{rk}

\subsection{Operations on floor diagrams}\label{subsec-lemmas-FD}

We will use the following operations on floor diagrams, introduced in \cite{brugalle_polynomiality_2022}.

\begin{figure}[h!]
	\begin{subfigure}[t]{0.49\textwidth}
	\centering
	\begin{tikzpicture}[scale=1]
		\floor (2) at (1,3) {$v_2$} ;
		\floor (1) at (0,1.5) {$v_1$} ;
		
		\draw (1) to node[right,pos=1/2]{\scriptsize $w(e_1)$} (2) ;
		\draw (1) to[out=100, in=-90] node[right,pos=4/5]{\footnotesize $w(e_2)$}  (-0.2,4.5);
		
		\draw[->] (2.2,3) to (3.3,3) ;
		
		\floor (4) at (5,3) {$v_2$} ;
		\floor (3) at (4,1.5) {$v_1$} ;
		
		\draw (3) to node[right,pos=1/2]{\scriptsize $w(e_1)+w(e_2)$} (4) ;
		\draw (4) to[out=90, in=-90] node[left,pos=1/2]{\scriptsize $w(e_2)$}  (5,4.5);
	\end{tikzpicture}
	\caption{Operation $A^+$.}
	\label{fig-operationsA+}
\end{subfigure}
\begin{subfigure}[t]{0.49\textwidth}
	\centering
	\begin{tikzpicture}[scale=1]
		\floor (2) at (0,3) {$v_2$} ;
		\floor (1) at (1,1.5) {$v_1$} ;
		
		\draw (1) to node[right,pos=1/2]{\scriptsize $w(e_1)$} (2) ;
		\draw (2) to[out=-100, in=90] node[right,pos=4/5]{\footnotesize $w(e_2)$}  (-0.2,0);
		
		\draw[->] (2.2,1.5) to (3.3,1.5) ;
		
		\floor (4) at (4,3) {$v_2$} ;
		\floor (3) at (5,1.5) {$v_1$} ;
		
		\draw (3) to node[right,pos=1/2]{\scriptsize $w(e_1)+w(e_2)$} (4) ;
		\draw (3) to[out=-90, in=90] node[left,pos=1/2]{\scriptsize $w(e_2)$}  (5,0);	
	\end{tikzpicture}
	\caption{Operation $A^-$.}
	\label{fig-operationsA-}
\end{subfigure}
	\caption{Operations $A^+$ and $A^-$.}
	\label{fig-operationsA}
\end{figure}
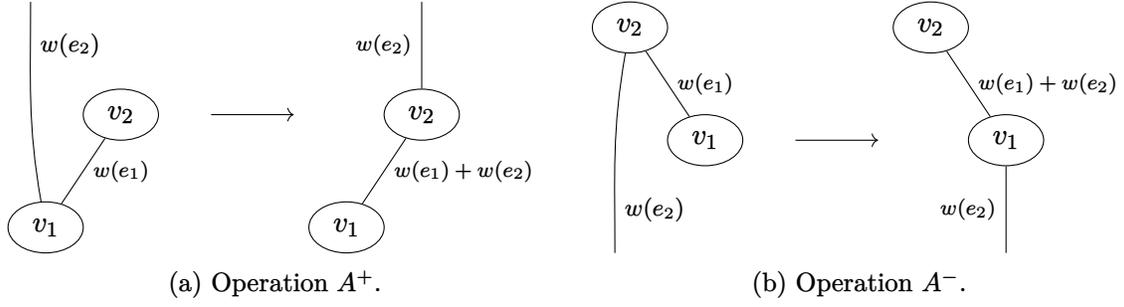

\begin{enumerate}
	\item[$A^+$ : ] If there are vertices $v_1 \prec v_2$ connected by an edge $e_1$ and another edge $e_2$ leaving $v_1$ but not entering $v_2$, then we construct a new diagram as depicted in figure \ref{fig-operationsA+}.
	
	\item[$A^-$ : ] Similarly if $e_2$ is entering $v_2$ but not leaving $v_1$, see figure \ref{fig-operationsA-}.
\end{enumerate}

\begin{lem}[{\cite[lemma 3.2]{brugalle_polynomiality_2022}}]\label{lem-opeAB}
Genus and Newton polygon are invariant under operations $A^\pm$. Moreover, the codegree decreases by $w(e_2)$ under operations $A^\pm$.
\end{lem}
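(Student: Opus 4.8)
The plan is to check each claimed invariant directly on the local picture defining $A^\pm$, since both operations are purely local: only the two edges $e_1, e_2$ (and the vertex $v_1$, possibly $v_2$) change, while the rest of the diagram is untouched. First I would treat $A^+$; the case $A^-$ is symmetric (reverse all orientations, or equivalently reflect the polygon in a horizontal line), so I would only indicate the sign changes. Throughout, write $\D$ for the original diagram and $\D'$ for the result of $A^+$.

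\emph{Newton polygon.} The number of vertices is unchanged (both $\D$ and $\D'$ have the same $V(\Gamma)$), and the number of sources and sinks is unchanged: in figure \ref{fig-operationsA+} the infinite edge $e_2$ on the left becomes a finite edge from $v_1$ to $v_2$ plus an infinite edge leaving $v_2$, so $|E^{+\infty}|$ is the same (one sink is removed at $v_1$ and one is created at $v_2$). Hence $a(\Delta)$, $e^{+\infty}(\Delta)$, $e^{-\infty}(\Delta)$ are preserved. The only subtlety is the bijections $L, R$ and the divergence condition $\div(v) = L(v) + R(v)$: I would verify that $A^+$ changes $\div(v_1)$ and $\div(v_2)$ in a way consistent with keeping $L, R$ fixed. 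At $v_1$ we remove an outgoing contribution of $w(e_2)$ along $e_2$ and add an outgoing contribution of $w(e_2)$ along the enlarged edge $e_1$ (now of weight $w(e_1)+w(e_2)$), so $\div(v_1)$ is unchanged; at $v_2$ we add an incoming weight $w(e_2)$ (from the enlarged $e_1$) and an outgoing weight $w(e_2)$ (the new infinite edge), so $\div(v_2)$ is unchanged. Thus the same $L, R$ work, and the Newton polygon is literally the same polygon $\Delta$.

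\emph{Genus.} The genus is the first Betti number $|E^0| - |V| + 1$ (the graph stays connected: the new finite edge only adds connectivity, and the argument that $\D'$ is still connected and acyclic needs a line — the new edge $v_1 \to v_2$ is compatible with the existing partial order since $v_1 \prec v_2$ already). Counting bounded edges: $A^+$ turns one infinite edge into one bounded edge and one infinite edge, so $|E^0|$ increases by $1$ and $|E^\infty|$ is unchanged — wait, that would change the genus. I would recheck the figure: in fact $e_1$ stays one bounded edge, $e_2$ (infinite, leaving $v_1$) is replaced by the new infinite edge leaving $v_2$, so $|E^0|$ and $|E^\infty|$ are both unchanged, and $|V|$ is unchanged, hence the genus is unchanged. (This bookkeeping is the place to be careful, and is the main potential pitfall, though it is routine once the figure is read correctly.)

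\emph{Codegree.} Since $\Delta$ and $g$ are preserved, $\codeg(\D') - \codeg(\D) = -(\deg(\D') - \deg(\D))$, so it suffices to compute the change in degree $\deg(\D) = \sum_{e} (w(e)-1)$. The only weight that changes is that of $e_1$: it goes from $w(e_1)$ to $w(e_1) + w(e_2)$, an increase of $w(e_2)$; all other edge weights (including that of $e_2$, which is just relabelled) are unchanged, and the new/old infinite edges have weight $1$. Hence $\deg(\D') = \deg(\D) + w(e_2)$, so $\codeg(\D') = \codeg(\D) - w(e_2)$, as claimed. For $A^-$ the same computation applies verbatim.

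I expect the only real obstacle to be the careful edge-counting verifying that $|E^0|$, $|E^\infty|$, and hence the genus, are genuinely unchanged — i.e., correctly reading off from figures \ref{fig-operationsA+} and \ref{fig-operationsA-} which edges are created, destroyed, or merely relabelled — together with checking that acyclicity and connectedness are preserved. Everything else is a one-line computation.
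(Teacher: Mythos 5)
Your verification is correct, and it is exactly the routine local check that the paper itself omits: Lemma \ref{lem-opeAB} is stated with a citation to \cite[lemma 3.2]{brugalle_polynomiality_2022} and no proof is reproduced here, so there is nothing in the paper to diverge from. Your final bookkeeping is the right one — $A^{\pm}$ merely reroutes $e_2$ from $v_1$ to $v_2$ and adds $w(e_2)$ to the weight of $e_1$, so $|V|$, $|E^0|$ and $|E^{\pm\infty}|$ are all unchanged, the divergences at $v_1$ and $v_2$ are unchanged (hence the same $L,R$ serve and $\Delta$ is preserved), the genus $|E^0|-|V|+1$ is preserved, and $\deg$ increases by $w(e_2)$ so $\codeg$ drops by $w(e_2)$. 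Two small imprecisions worth fixing in a final write-up: the "sink removed at $v_1$, created at $v_2$" phrasing only describes the case where $e_2$ is infinite (in which case $w(e_2)=1$ by definition of a floor diagram; the interesting case is $e_2$ bounded, where its far endpoint is some third vertex and the count of sinks is simply untouched); and the acyclicity remark misidentifies the relevant edge — the edge whose compatibility with the order needs a word is the rerouted $e_2$, now leaving $v_2$ toward its original other endpoint, not the pre-existing edge $v_1\to v_2$. Neither point affects the three conclusions of the lemma, since genus, Newton polygon and codegree are computed from the underlying weighted graph data you have already checked.
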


\section{Refined invariants in the non-rational case}

\subsection{Definition of $G_g(\Delta,s)$}

\begin{defi} \label{def-GDeltaS}
Let $\Delta$ be a $h$-transverse polygon, $g\in\N$, $s\in \N$ and $S$ be a pairing  of order $s$ of $\{1,\dots, 2\smax(\Delta,g) \}$. We define 
\[ G_g(\Delta,S) = \dsum_{(\D,m)} \mu_S(\D,m) \in \Z[q^{\pm1}]  \]
where the sum runs over the isomorphism classes of marked floor diagrams with Newton polygon $\Delta$ and genus $g$.
\end{defi}

The goal is now to turn the $S$-dependence into a $s$-dependence. We start with a technical lemma on quantum integers. Remember we denote $[n]^2 = [n](q)^2$ and $[n]_2 = [n](q^2)$. 

\begin{lem}\label{lem-quantum} 
Let $a,b \in\Z$ be integers. Then 
\begin{align*}
2[a][b][a+b] &= [2]\left( [a+b]^2 [a]_2 - [a+b]_2 [a]^2 \right) \\
 &= [2] \left([a]^2[b]_2 + [a]_2 [b]^2 \right).
\end{align*}
\end{lem}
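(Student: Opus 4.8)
The plan is to reduce everything to the basic closed form $[n](q) = \frac{q^{n/2}-q^{-n/2}}{q^{1/2}-q^{-1/2}}$ and verify the two identities by direct algebraic manipulation. Introduce the shorthand $t = q^{1/2}$, so that $[n](q) = \frac{t^n - t^{-n}}{t - t^{-1}}$ and $[n](q^2) = \frac{t^{2n}-t^{-2n}}{t^2 - t^{-2}} = \frac{t^{2n}-t^{-2n}}{(t-t^{-1})(t+t^{-1})}$, while $[2](q) = t + t^{-1}$. First I would clear all denominators: multiplying each of the three expressions by $(t-t^{-1})^3(t+t^{-1})$, the identity $2[a][b][a+b] = [2]([a+b]^2[a]_2 - [a+b]_2[a]^2)$ becomes a polynomial identity in $t^{\pm 1}$, namely
\[ 2(t+t^{-1})(t^a-t^{-a})(t^b-t^{-b})(t^{a+b}-t^{-(a+b)}) = (t^{a+b}-t^{-(a+b)})^2(t^{2a}-t^{-2a}) - (t^{2(a+b)}-t^{-2(a+b)})(t^a-t^{-a})^2, \]
after noting that on the right-hand side the factor $(t+t^{-1})$ coming from $[a]_2$, resp. $[a+b]_2$, cancels against one factor of $[2]$. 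Both sides are then Laurent polynomials in $t$; expanding and collecting terms should show they agree. The same scheme handles the second equality $2[a][b][a+b] = [2]([a]^2[b]_2 + [a]_2[b]^2)$.

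A cleaner way to organize the expansion, which I would actually use to avoid a brute-force monomial count, is to work with the "sine-like" function $s(x) := t^x - t^{-x}$, which satisfies the product-to-sum rule $s(x)s(y) = \tfrac{1}{?}$ — more precisely $2\cdot\frac{s(x)s(y)}{?}$; concretely $ (t^x - t^{-x})(t^y - t^{-y}) = (t^{x+y}+t^{-(x+y)}) - (t^{x-y}+t^{-(x-y)})$. Writing $c(x) := t^x + t^{-x}$, this reads $s(x)s(y) = c(x+y) - c(x-y)$, and similarly $c(x)s(y) = s(x+y) - s(x-y)$, $s(x)c(y) = s(x+y)+s(x-y)$. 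Using these addition formulas, both sides of each claimed identity reduce to a short linear combination of the functions $s(\cdot)$ evaluated at integer combinations of $a,b$, and one checks the coefficients match. For instance $s(a)s(b)s(a+b)$ expands via $s(a)s(b) = c(a+b)-c(a-b)$ and then $c(a+b)s(a+b) = s(2(a+b)) + s(0) = s(2(a+b))$ while $c(a-b)s(a+b) = s(2a) + s(2b)$, giving $s(a)s(b)s(a+b) = s(2(a+b)) - s(2a) - s(2b)$; the right-hand sides unfold similarly, and the factor $c(1) = t + t^{-1} = [2]$ is tracked throughout.

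I do not expect a serious obstacle here: the statement is an elementary identity between Laurent polynomials and the only real work is bookkeeping. The mild subtlety worth flagging is the appearance of $[2]$ in a denominator in Definition 1.6 and in the numerator here — one should make sure the cancellation of the $(t+t^{-1})$ factors is done correctly (i.e. that $[n]_2 = [n](q^2)$ genuinely carries one factor of $[2](q) = t + t^{-1}$ relative to $[n](q)$ when $n$ is replaced appropriately), so that the final equality is a genuine polynomial identity with no spurious denominators. Once the addition formulas above are in place, each of the two equalities is a two-line check, and the lemma follows. I would also remark that it suffices to verify the identities for $a, b \in \N$ by symmetry and the fact that $[-n] = -[n]$, which keeps the exponent bookkeeping nonnegative.
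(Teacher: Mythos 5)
Your proposal is correct, and it is the same method in substance as the paper's: both reduce the lemma to a direct verification from the closed formula $[n](q)=\frac{q^{n/2}-q^{-n/2}}{q^{1/2}-q^{-1/2}}$. The difference is purely organizational. The paper puts everything over the common denominator $(q^{1/2}-q^{-1/2})^3$, computes the single general quantity $[2][c]^2[d]_2$ once, and then specializes $(c,d)$ four times to read off both equalities by matching Laurent monomials. You instead set $s(x)=t^x-t^{-x}$, $c(x)=t^x+t^{-x}$ with $t=q^{1/2}$, observe that $[n]_2=s(2n)/(s(1)c(1))$ so the $[2]=c(1)$ in front cancels, and then each identity collapses via $s(2a)=s(a)c(a)$ and the addition formulas $s(x)c(y)\pm c(x)s(y)=2s(x\pm y)$: for instance the first right-hand side becomes $s(a)s(a+b)\bigl(s(a+b)c(a)-c(a+b)s(a)\bigr)=2s(a)s(b)s(a+b)$. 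This is a slightly cleaner bookkeeping that avoids the monomial count, at the cost of introducing the auxiliary notation; the paper's version is more mechanical but equally short. (The stray ``$\tfrac{1}{?}$'' placeholder in your statement of the product-to-sum rule should be deleted, since you immediately give the correct identity $s(x)s(y)=c(x+y)-c(x-y)$ afterwards; and the final remark about reducing to $a,b\in\N$ is unnecessary, as the computation is uniform in the signs.)
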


\begin{demo}
% Writing the definition of the quantum integers and expanding the products we see that the three quantities are equal to 
% \[ 2 \dfrac{ q^{a+b} - q^{-a-b} -q^a + q^{-a} -q^b +q^{-b} }{(q^{1/2}-q^{-1/2})^3} . \]
% 
The first quantity is
\begin{align*}
2[a][b][a+b] &= 2 \dfrac{(q^{a/2}-q^{-a/2}) (q^{b/2}-q^{-b/2}) (q^{(a+b)/2}-q^{-(a+b)/2})}{(q^{1/2}-q^{-1/2})^3}  \\
  &= 2 \dfrac{ q^{a+b} - q^{-a-b} -q^a + q^{-a} -q^b +q^{-b} }{(q^{1/2}-q^{-1/2})^3}.
\end{align*}

To show the equalities, for any integers $c,d$ we first compute
\begin{align*}
[2][c]^2[d]_2 &= \dfrac{q-q^{-1}}{q^{1/2}-q^{-1/2}} \times \left(\dfrac{q^{c/2}-q^{-c/2}}{q^{1/2}-q^{-1/2}}\right)^2 \times \dfrac{q^c-q^{-c}}{q-q^{-1}}  \\
  &= \dfrac{q^{c+d} - q^{c-d} - 2(q^d-q^{-d}) + q^{-c+d} - q^{-c-d}}{(q^{1/2}-q^{-1/2})^3}.
\end{align*}
Applying this to $(c,d) = (a+b,a)$ and $(c,d) = (a,a+b)$ we deduce that
\begin{align*}
 [2]\left([a+b]^2[a]_2 - [a+b]_2[a]^2\right) 
    ={} &   \dfrac{q^{2a+b} - q^b- 2(q^a-q^{-a}) + q^{-b} - q^{-2a-b}}{(q^{1/2}-q^{-1/2})^3} \\
    & - \dfrac{q^{2a+b} - q^{-b} - 2(q^{a+b}-q^{-a-b}) + q^b - q^{-2a-b}}{(q^{1/2}-q^{-1/2})^3} \\
    ={} & 2 \dfrac{ q^{a+b} - q^{-a-b} -q^a + q^{-a} -q^b +q^{-b} }{(q^{1/2}-q^{-1/2})^3} \\
    ={} & 2[a][b][a+b],
\end{align*}
and applying it to $(c,d) = (a,b)$ and $(c,d) = (b,a)$ we get
\begin{align*}
[2]\left([a]^2[b]_2 + [a]_2[b]^2\right) 
    ={} &  \dfrac{q^{a+b} - q^{a-b} - 2(q^b-q^{-b}) + q^{-a+b} - q^{-a-b}}{(q^{1/2}-q^{-1/2})^3} \\
    & + \dfrac{q^{a+b} - q^{-a+b} - 2(q^a-q^{-a}) + q^{a-b} - q^{-a-b}}{(q^{1/2}-q^{-1/2})^3} \\
    ={} & 2 \dfrac{ q^{a+b} - q^{-a-b} -q^a + q^{-a} -q^b +q^{-b} }{(q^{1/2}-q^{-1/2})^3} \\
    ={} & 2[a][b][a+b] 
\end{align*}
so the three quantities are equal.
\end{demo}

We can now prove the main result of this paper.

\begin{theo} \label{theo-invS}
Let $\Delta$ be $h$-transverse polygon and $g\in\N$. Let $s \in \N$ and $S,S'$ be two pairings of order $s$. Then $G_g(\Delta,S) = G_g(\Delta,S')$.
\end{theo}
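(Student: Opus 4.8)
The strategy is to reduce from the general case to the case where $S$ and $S'$ differ in a single pair, and in that elementary case to establish the identity by a weight-preserving (bijective) argument at the level of marked floor diagrams. First I would observe that since both $S$ and $S'$ have order $s$, it suffices — by transitivity, composing a chain of such moves — to treat the case where $S = S_0 \sqcup \{\beta\}$ and $S' = S_0 \sqcup \{\beta'\}$ with $S_0$ a common pairing of order $s-1$ and $\{\beta\}, \{\beta'\}$ two pairs of consecutive integers (one should check that any two pairings of order $s$ are connected by such single-pair swaps, possibly after first routing through pairings that share as many pairs as possible; the combinatorics of pairs $\{i,i+1\}$ in $\{1,\dots,n\}$ makes this elementary). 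So the heart of the matter is: fix a pairing $S_0$ of order $s-1$ and two "free" pairs $\beta \ne \beta'$; show
\[ \dsum_{(\D,m)} \mu_{S_0 \cup \{\beta\}}(\D,m) = \dsum_{(\D,m)} \mu_{S_0 \cup \{\beta'\}}(\D,m), \]
the sums over isomorphism classes of marked floor diagrams of Newton polygon $\Delta$ and genus $g$.

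**Main technical step.** Both sides can be rewritten as sums over marked floor diagrams $(\D,m)$ compatible with $S_0$, with the common factor $\dprod_{e\in E_0(S_0)}[w(e)]^2 \cdot (\text{contributions of the pairs in } S_0)$ pulled out; what remains on each side is, for the extra pair $\beta = \{i,i+1\}$, either a factor $[w(e)]_2$ (if $m^{-1}(\beta)$ is an edge $e$ plus an adjacent vertex) or a factor $[w(e)][w(e')][w(e)+w(e')]/[2]$ (if $m^{-1}(\beta) = \{e,e'\}$ is two edges meeting a vertex on the same side) — and $0$ if $m$ is not compatible with the extra pair. Here is where Lemma \ref{lem-quantum} enters: the identity $2[a][b][a+b] = [2]([a+b]^2[a]_2 - [a+b]_2[a]^2) = [2]([a]^2[b]_2 + [a]_2[b]^2)$ is exactly the algebraic reconciliation needed to match a "two-edges" contribution against a combination of "edge-and-vertex" contributions coming from the operations $A^\pm$ of section \ref{subsec-lemmas-FD}. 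Concretely, given a diagram where $m^{-1}(\beta)$ is two edges $e_1,e_2$ leaving (or entering) a common vertex, applying operation $A^+$ (resp. $A^-$) merges them into a single edge of weight $w(e_1)+w(e_2)$ and produces a new diagram of the same genus and Newton polygon by Lemma \ref{lem-opeAB}; the marking and the pairing transport along this operation, and the multiplicity bookkeeping is governed precisely by Lemma \ref{lem-quantum}. Summing the local contributions over all the ways the integers $i,i+1$ can sit on the diagram (edge+vertex vs. two edges, and, for two edges, which vertex and which side) should make the total independent of whether the free pair is $\beta$ or $\beta'$ — because after pulling out the $S_0$-part, the remaining "local" sum is literally the same combinatorial object computed around a different pair of consecutive positions, and the genus-$0$ proof of \cite[theorem 2.13]{brugalle_polynomiality_2022} (equivalently theorem \ref{theo-GS-inv-formula}) does exactly this with the extra rational-case simplifications removed.

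**Where the difficulty lies.** The routine algebra (expanding quantum integers) is already dispatched by Lemma \ref{lem-quantum}, so that is not the obstacle. The real work is the bookkeeping: setting up the correspondence between marked diagrams where the free pair is realized as "edge + adjacent vertex" and those where it is realized as "two edges at a vertex", checking that operations $A^\pm$ respect compatibility with the fixed part $S_0$ of the pairing (the positions in $S_0$ must not be disturbed — this needs care because merging/splitting edges changes the marking values of everything in between, and one must verify the increasing-bijection and compatibility conditions still hold), and handling isomorphism classes correctly so nothing is over- or under-counted. A further point requiring attention, absent in genus $0$, is that higher-genus diagrams have cycles worth of extra edges, so more configurations of where the pair $\{i,i+1\}$ can land must be enumerated; one should verify that the argument is genuinely local around the pair and that the global genus plays no role beyond being preserved by $A^\pm$. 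I expect the cleanest write-up groups marked diagrams into "orbits" under the operation that swaps the realization of the free pair, shows each orbit contributes the same total to both sides via Lemma \ref{lem-quantum}, and thereby concludes $G_g(\Delta, S) = G_g(\Delta, S')$, whence the invariant $G_g(\Delta, s)$ is well-defined.
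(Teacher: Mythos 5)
Your overall strategy --- reduce to pairings differing in a single pair, then compare contributions locally using Lemma \ref{lem-quantum} and the operations $A^\pm$ --- is the right family of ideas, and these are indeed the tools the paper uses. But there is a genuine gap in how you set up the reduction, and it propagates into your ``main technical step''. You reduce to $S = S_0\cup\{\beta\}$ versus $S'=S_0\cup\{\beta'\}$ with $\beta,\beta'$ arbitrary distinct pairs of consecutive integers, and then claim that after pulling out the $S_0$-part ``the remaining local sum is literally the same combinatorial object computed around a different pair of consecutive positions''. This is unjustified and essentially restates what must be proved: a marking is a global increasing bijection, so the collection of local configurations that can occur at positions $\{i,i+1\}$ (which elements of $\D$ can receive those labels, with which weights, in how many markings) genuinely depends on $i$; for $\beta=\{i,i+1\}$ and $\beta'=\{j,j+1\}$ with $j$ far from $i$ there is no local correspondence to exploit. (Note also that the genus-$0$ statement you invoke was obtained in the cited reference via the pre-existing geometric invariance, not by the combinatorial argument you are trying to run, so it cannot be borrowed as a template for this step.)

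The paper's fix is to take the elementary move to be an \emph{overlapping slide}: $\{i,i+1\}\in S$ versus $\{i+1,i+2\}\in S'$. Then only the three consecutive positions $i,i+1,i+2$ are in play; one fixes a \emph{partial marking} of all other elements and performs a finite case analysis (organized by how many of the three remaining elements are vertices: $3$, $2$, $1$ or $0$) of the ways to distribute the labels $i$, $i+1$, $i+2$, grouping the resulting marked diagrams into parts $P$ on which the $S$- and $S'$-sums agree. It is inside this three-position analysis that Lemma \ref{lem-quantum} and the operations $A^\pm$ (with Lemma \ref{lem-opeAB}) are deployed, much as you anticipate for matching the ``two edges at a vertex'' configuration against ``edge plus adjacent vertex'' configurations on a modified diagram. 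So your proposal is salvageable, but you must (i) restrict the elementary move to overlapping pairs and verify the (easy) connectivity of pairings of order $s$ under such slides, and (ii) replace the asserted position-independence of the local sum by the explicit case analysis over the three free positions; as written, (ii) is missing and the argument does not close.
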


The strategy to prove the theorem is the following. We will determine a partition $(P_k)_k$ of the marked floor diagrams such that for any $k$ one has
\[ \sum_{(\D,m) \in P_k} \mu_S(\D,m) = \sum_{(\D,m) \in P_k} \mu_{S'}(\D,m). \]
To do so, we inductively construct the partition $(P_k)_k$. We start with a marked floor diagram $(\D_1,m_1)$ and we determine a set $P_1$ of marked floor diagrams such that $P_1$ contains $(\D_1,m_1)$ and
\[ \sum_{(\D,m) \in P_1} \mu_S(\D,m) = \sum_{(\D,m) \in P_1} \mu_{S'}(\D,m). \]
We then choose another marked floor diagram $(\D_2,m_2) \notin P_1$, and similarly determine a set $P_2$ disjoint from $P_1$, etc. Hence, given an arbitrary marked floor diagram it suffices to give the part $P$ of the partition it is contained in.
More precisely, in the proof we introduce \emph{partial markings} and we will simultaneously handle the case of several marked diagrams, all coming from the same \emph{partial} marked diagram. 

\begin{demo}[Proof of theorem \ref{theo-invS}]
It is sufficient to suppose that $S$ and $S'$ differ by one pair, and we can assume that this pair is $\{i,i+1\} \in S$ and $\{i+1,i+2\}\in S'$. 
Given $\D$ a floor diagram of Newton polygon $\Delta$ and genus $g$, a \emph{partial marking} of $\D$ is an application that associates to all but three elements of $\D$ an integer of $\{1,\dots,n(\D)\} \setminus \{i,i+1,i+2\}$ in a bijective and increasing way. A partial marking gives several markings by labeling the three remaining elements of $\D$ with $i$, $i+1$ and $i+2$. 

Let $\D$ be a floor diagram of Newton polygon $\Delta$ and genus $g$. Assume we are given a partial marking of $\D$. We will investigate the possibilities to construct a marked floor diagram from this data. To do so, for any relative positions of the three elements left aside by the partial marking, we look at the possible choices to extend the partial marking. We will distinguish cases according to the number of vertices left aside by the partial marking. In all the proof, $W$ will be the contribution to $\muSDm$ and $\muSSDm$ of the edges marked by the partial marking. 

\tocless{\subsection*{3 vertices.}} 
 In that case both $S$ and $S'$ are incompatible whatever the marking $m$ extending the partial marking is, \ie 
 \[ \muSDm = \muSSDm = 0.\]
 So take $P = \{(\D,m),\ m \text{ extension of the partial marking} \}$.

\tocless{\subsection*{2 vertices.}}
The unique edge left aside by the partial marking can :
\begin{itemize}
    \item link the two vertices (figure \ref{fig-2vert-a}),
    \item be adjacent to only one of the two edges (figure \ref{fig-2vert-b}, and the symmetric case where the edge is above the vertex),
    \item or be adjacent to none of the vertices (figure \ref{fig-2vert-c}).
\end{itemize} 
On those pictures we do not represent other vertices and edges of $\D$. 

\begin{figure}[h!]
	\begin{subfigure}[t]{0.33\textwidth}
	\centering
	\begin{tikzpicture}[scale=1]	
	
		\floor (1) at (0,0) {} ;
		\floor (2) at (0,1.5) {} ;
		\draw (1) to (2) ;
		
	\end{tikzpicture}
	\caption{}
	\label{fig-2vert-a}
\end{subfigure}
\begin{subfigure}[t]{0.3\textwidth}
	\centering
	\begin{tikzpicture}[scale=1]
	
	\floor (1) at (0,1) {} ;
	\draw (1) to (0,0) ;
	\floor at (1.5,0.5) {} ;

	\end{tikzpicture}
	\caption{}
	\label{fig-2vert-b}
\end{subfigure}	
\begin{subfigure}[t]{0.33\textwidth}
	\centering
	\begin{tikzpicture}[scale=1]
		
	\draw (0,1.5) to (0,0) ;
	\floor at (1.5,0.75) {} ;
	\floor at (3,0.75) {} ;		
		
	\end{tikzpicture}
	\caption{}
	\label{fig-2vert-c}
\end{subfigure}
\caption{Possible configurations with 2 vertices.}
	\label{2vert}
\end{figure}
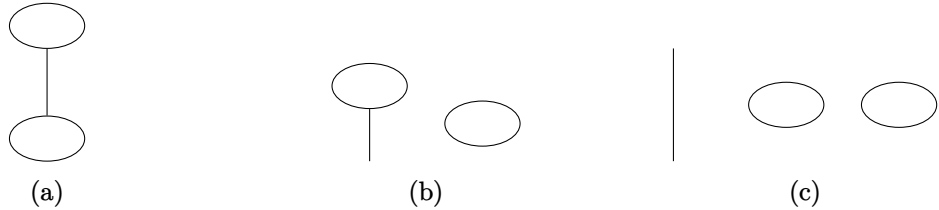

We deal with the three cases separately.

\begin{enumerate}[label=(\alph*)]

\item There is only one possible marking $m$ and one has 
\[ \muSDm = \muSSDm \] 
so take $P =\{ (\D,m) \}$.

\item There are three possible markings. Let $m_k$ be the extension where the right vertex is $i+k$ for $k=0,1,2$. The marking $m_1$ is incompatible with both $S$ and $S'$ \ie $\muSDmkk = \muSSDmkk = 0$, and one has $\muSDmk = \muSSDmkkk = 0$ and $\muSDmkkk = \muSSDmk$. Thus
\[  \muSDmk + \muSDmkk + \muSDmkkk = \muSSDmk + \muSSDmkk + \muSSDmkkk \]
and we take $P =\{ (\D,m_0),(\D,m_1),(\D,m_2) \}$.

\item Any marking $m$ is incompatible with both $S$ and $S'$ \ie  
\[ \muSDm = \muSSDm = 0 \]
and take $P = \{(\D,m),\ m \text{ extension of the partial marking} \}$.

\end{enumerate}

\tocless{\subsection*{1 vertex.}}
The unique vertex left aside by the partial marking can :
\begin{itemize}
    \item be adjacent to both edges (figure \ref{fig-1vert-a} where the edges can share a second common vertex or not, the symmetric case where the edges are above the vertex, and figure \ref{fig-1vert-b}),

    \item be adjacent to one of the two edges (figures \ref{fig-1vert-c} where the edges are adjacent to a common vertex, the symmetric case where the common vertex is above the edges, figure \ref{fig-1vert-d} and its symmetric case),

    \item or be adjacent to none of the edges (figure \ref{fig-1vert-e} if the edges are adjacent to at least one common vertex, its symmetric case, and figure \ref{fig-1vert-f}).
\end{itemize}

On those pictures, solid lines are for elements left aside by the partial marking, and we represent other vertices with dashed lines if they are relevant (\ie play a role) in the calculations.

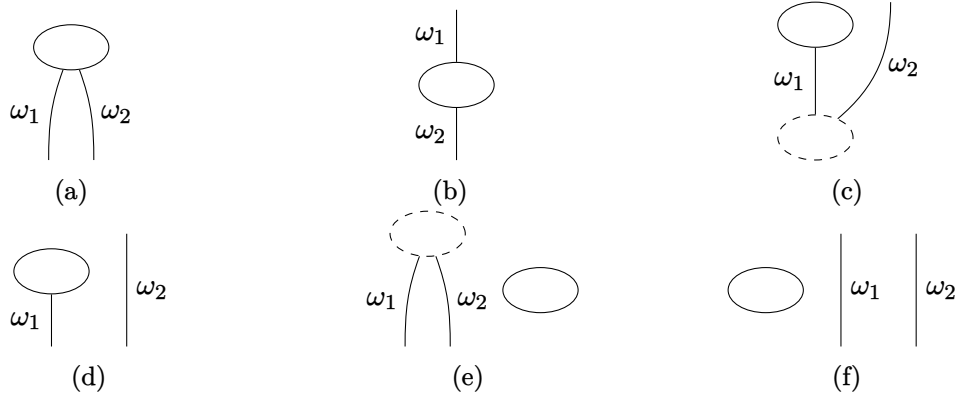
\begin{figure}[h!]
	\begin{subfigure}[t]{0.3\textwidth}
	\centering
	\begin{tikzpicture}[scale=1]
	
	\floor (1) at (0,1.5) {} ;
	\draw (1) to[out=-70, in = 90] node[right,pos=1/2]{$\omega_2$} (0.3,0) ;
	\draw (1) to[out=-110, in = 90] node[left,pos=1/2]{$\omega_1$} (-0.3,0) ;

	\end{tikzpicture}
	\caption{}
	\label{fig-1vert-a}
\end{subfigure}	
\begin{subfigure}[t]{0.33\textwidth}
	\centering
	\begin{tikzpicture}[scale=1]	
	
		\floor (1) at (0,0) {} ;
		\draw (1) to node[left,pos=1/2]{$\omega_1$} (0,1) ;
		\draw (1) to node[left,pos=1/2]{$\omega_2$} (0,-1) ;
		
	\end{tikzpicture}
	\caption{}
	\label{fig-1vert-b}
\end{subfigure}
\begin{subfigure}[t]{0.33\textwidth}
	\centering
	\begin{tikzpicture}[scale=1]
	
	\floor (2) at (0,1.5) {} ;
	\floor[dashed] (1) at (0,0) {} ;
	\draw (1) to node[left,pos=1/2]{$\omega_1$} (2) ;
	\draw (1) to[out=40, in=-90] node[right,pos=1/2]{$\omega_2$} (1,1.8) ;

	% \floor[dashed] (4) at (3,1.5) {} ;
	% \floor (3) at (3,0) {} ;
	% \draw (3) to node[left,pos=1/2]{$\omega_1$} (4) ;
	% \draw (4) to[out=-40, in=90] node[right,pos=1/2]{$\omega_2$} (4,-0.3) ;	
		
	\end{tikzpicture}
	\caption{}
	\label{fig-1vert-c}
\end{subfigure}
\begin{subfigure}[t]{0.33\textwidth}
	\centering
	\begin{tikzpicture}[scale=1]
	
	\floor (1) at (0,1) {} ;
	\draw (1) to node[left,pos=1/2]{$\omega_1$} (0,0) ;
	\draw (1,0) to node[right,pos=1/2]{$\omega_2$} (1,1.5) ;	
		
	\end{tikzpicture}
	\caption{}
	\label{fig-1vert-d}
\end{subfigure}
\begin{subfigure}[t]{0.3\textwidth}
	\centering
	\begin{tikzpicture}[scale=1]
		
	\floor at (3,0.75) {} ;
	\floor[dashed] (1) at (1.5,1.5) {} ;
	\draw (1) to[out=-70, in=90] node[right,pos=1/2]{$\omega_2$} (1.8,0) ;
	\draw (1) to[out=-110, in=90] node[left,pos=1/2]{$\omega_1$} (1.2,0) ;

	\end{tikzpicture}
	\caption{}
	\label{fig-1vert-e}
\end{subfigure}	
\begin{subfigure}[t]{0.33\textwidth}
	\centering
	\begin{tikzpicture}[scale=1]	
		
	\draw (1,1.5) to node[right,pos=1/2]{$\omega_1$} (1,0) ;
	\draw (2,1.5) to node[right,pos=1/2]{$\omega_2$} (2,0) ;
	\floor at (0,0.75) {} ;
		
	\end{tikzpicture}
	\caption{}
	\label{fig-1vert-f}
\end{subfigure}

\caption{Possible configurations with 1 vertex.}
	\label{fig-1vert}
\end{figure}

We deal with the different cases separately.

\begin{enumerate}[label=(\alph*)]
\item Denote $m_0$ (resp. $m_1$) the marking where the left edge is $i$ (resp. $i+1$). Then one has $\muSDmk = \muSDmkk = \dfrac{[\omega_1][\omega_2][\omega_1+\omega_2]}{[2]} W$, $\muSSDmk = [\omega_1]^2 [\omega_2]_2 W$ and $\muSSDmkk = [\omega_1]_2 [\omega_2]^2 W$. Lemma \ref{lem-quantum} shows that 
\[ \muSDmk + \muSDmkk = \muSSDmk + \muSSDmkk  \]
so take $P = \{(\D,m_0), (\D,m_1)\}$.

\item[(b,c)] If the diagram $\D$ is in case (b), it might be necessary to include marked diagrams of case (c) to the part $P$ containing $(\D,m)$, where $m$ is the unique marking extending the partial marking of $\D$. For that reason, cases (b) and (c) are handled together.

In case (b), first assume $\omega_1=\omega_2$. Then one has $\muSDm = \muSSDm$ and we take $P=\{(\D,m)\}$ ; when $\omega_1 = \omega_2$ there is no corresponding diagram of case (c).

Otherwise $\omega_1 \neq \omega_2$ and we assume $\omega_2 > \omega_1$. In particular, one has $\omega_2>1$ so the edge with weight $\omega_2$ cannot be an infinite edge and is necessarily adjacent to a second vertex. Moreover $\omega_2$ can be written $\omega_1 + (\omega_2-\omega_1)$ with both terms positive. In the end, this case (b) is related to case (c) via an operation $A^+$, see figure \ref{fig-1vertA+}. 
Conversely any case (c) gives a case (b) with $\omega_2 > \omega_1$ via an operation $A^+$.

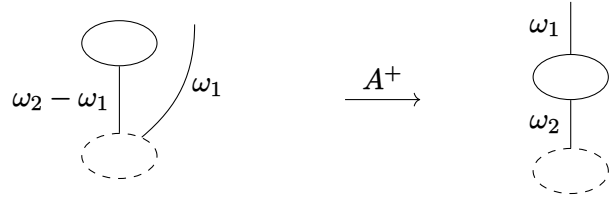
\begin{figure}[h]
	\centering
\begin{tikzpicture}[scale=1]	
	\floor (2) at (0,0.75) {} ;
	\floor[dashed] (1) at (0,-0.75) {} ;
	\draw (1) to node[left,pos=1/2]{$\omega_2-\omega_1$} (2) ;
	\draw (1) to[out=40, in=-90] node[right,pos=1/2]{$\omega_1$} (1,1) ;	
	
	\draw[->] (3,0) -- node[above,pos=1/2]{$A^+$} (4,0)  ;
	
	\floor (2) at (6,0.3) {} ;
    \floor[dashed] (1) at (6,-0.95) {} ;
	\draw (2) to node[left,pos=1/2]{$\omega_1$} (6,1.3) ;
	\draw (2) to node[left,pos=1/2]{$\omega_2$} (1) ;
		
\end{tikzpicture}
\caption{Passing from case (c) to case (b).}
	\label{fig-1vertA+}
\end{figure}

Let $\D'$ be the floor diagram of case (c) which gives the diagram $\D$ of case (b) with the $A^+$ operation of figure \ref{fig-1vertA+}. Let $m'_k$ be the marking of $\D'$ where the right edge is $i+k$ for $k=0,1,2$. 
One has :

\begin{itemize}
 \item $\muSDDmk = \dfrac{[\omega_1][\omega_2-\omega_1][\omega_2]}{[2]} W$ and $\muSSDDmk = [\omega_1]^2 [\omega_2-\omega_1]_2 W$,
 
 \item $\muSDDmkk = \dfrac{[\omega_1][\omega_2-\omega_1][\omega_2]}{[2]} W$ and $\muSSDDmkk = 0$,
 
 \item $\muSDDmkkk = [\omega_1]^2 [\omega_2-\omega_1]_2 W$ and $\muSSDDmkkk = 0$.
\end{itemize}

For $\D$ we have  $\muSDm = [\omega_1]^2 [\omega_2]_2 W$ and $\muSSDm = [\omega_1]_2 [\omega_2]^2 W$. Hence taking $a=\omega_1$ and $b=\omega_2 - \omega_1$ in lemma \ref{lem-quantum} we see that 
   \begin{align*}
       &\ \muSDm + \muSDDmk + \muSDDmkk + \muSDDmkkk \\
       =&\ \muSSDm + \muSSDDmk + \muSSDDmkk + \muSSDDmkkk
   \end{align*}
and we can take $P=\{ (\D,m), (\D',m'_0), (\D',m'_1), (\D',m'_2) \}$.
If $\omega_1 > \omega_2$ the proof is analogous using the symmetric case of figure \ref{fig-1vert-c} and an operation $A^-$.

\item[(d)] This is similar to figure \ref{fig-2vert-b}. There are three possible markings. Let $m_k$ be the extension where the right edge is $i+k$ for $k=0,1,2$. The marking $m_1$ is incompatible with both $S$ and $S'$ \ie $\muSDmkk = \muSSDmkk = 0$, and one has $\muSDmk = \muSSDmkkk = 0$ and $\muSDmkkk = \muSSDmk$ so
\[  \muSDmk + \muSDmkk + \muSDmkkk = \muSSDmk + \muSSDmkk + \muSSDmkkk \]
and take $P =\{ (\D,m_0),(\D,m_1),(\D,m_2) \}$.

\item[(e)] Let $m_k$ and $m'_k$ be the two markings where the vertex is $i+k$ for $k=0,1,2$. One has
\[ \muSDmkk = \muSSDmkk = \muSDmk = \muSDmmk = \muSSDmkkk = \muSSDmmkkk = 0 \]
and 
\[ \muSDmkkk = \muSDmmkkk = \muSSDmk = \muSSDmmk = \dfrac{[\omega_1][\omega_2][\omega_1+\omega_2]}{[2]} W, \]
 so
\begin{align*}
    &\ \muSDmk + \muSDmmk + \muSDmkk + \muSDmmkk + \muSDmkkk + \muSDmmkkk  \\
    =&\ \muSSDmk + \muSSDmmk + \muSSDmkk + \muSSDmmkk + \muSSDmkkk + \muSSDmmkkk.
\end{align*} 
and we take $P = \{ (\D,m_0), (\D,m'_0), (\D,m_1), (\D,m'_1), (\D,m_2), (\D,m'_2) \}$.

\item[(f)] Any marking $m$ is incompatible with both $S$ and $S'$ \ie 
\[ \muSDm = \muSSDm = 0 \]
and take $P = \{(\D,m),\ m \text{ extension of the partial marking} \}$.

\end{enumerate}

\tocless{\subsection*{0 vertex.}}
The edges left aside by the partial marking can :
\begin{itemize}
    \item be adjacent to a common vertex (figure \ref{fig-0vert-a} and the symmetric case where the vertex is below the edge),

    \item one can share at least a common vertex with any of the others, but the other two do not have a common vertex (figure \ref{fig-0vert-b}),

    \item two of them can share at least one common vertex, and the last edge has no common vertex with the other two (figure \ref{fig-0vert-c}),

    \item have no common vertex (figure \ref{fig-0vert-d}).
\end{itemize}

On those pictures, solid lines are for elements left aside by the partial marking, and we represent other vertices with dashed lines if they are relevant (\ie play a role) in the calculations.

\begin{figure}[h]
	%a
\begin{subfigure}[t]{0.24\textwidth}
	\centering
	\begin{tikzpicture}[scale=1]	
	
		\floor[dashed,minimum width=1.5cm] (1) at (0,1.5) {} ;
		\draw (1) to[out=-145,in=90] node[left,pos=1/2]{$\omega_1$} (-0.7,0) ;
		\draw (1) to node[left,pos=1/2]{$\omega_2$} (0,0) ;
		\draw (1) to[out=-35,in=90] node[right,pos=1/2]{$\omega_3$} (0.7,0) ;
		
	\end{tikzpicture}
	\caption{}
	\label{fig-0vert-a}
\end{subfigure}
%b
\begin{subfigure}[t]{0.24\textwidth}
	\centering
	\begin{tikzpicture}[scale=1]
	
		\floor[dashed] (1) at (0,1.5) {} ;
		\floor[dashed] (2) at (1,0) {} ;
		\draw (1) to[out=-110,in=90] node[left,pos=1/2]{$\omega_1$} (-0.3,0) ;
		\draw (1) to node[right,pos=1/3]{$\omega_2$} (2) ;
		\draw (2) to[out=70,in=-90] node[right,pos=1/2]{$\omega_3$} (1.3,1.8) ;

	\end{tikzpicture}
	\caption{}
	\label{fig-0vert-b}
\end{subfigure}	
%c
\begin{subfigure}[t]{0.24\textwidth}
	\centering
	\begin{tikzpicture}[scale=1]
	
	\floor[dashed] (1) at (0,1.5) {} ;
	\draw (1) to[out=-70, in=90] node[right,pos=1/2]{$\omega_2$} (0.3,0) ;
	\draw (1) to[out=-110, in=90] node[left,pos=1/2]{$\omega_1$} (-0.3,0) ;
	\draw (1,0) to node[right,pos=1/2]{$\omega_3$} (1,1.8) ;
		
	\end{tikzpicture}
	\caption{}
	\label{fig-0vert-c}
\end{subfigure}
%d
\begin{subfigure}[t]{0.24\textwidth}
	\centering
	\begin{tikzpicture}[scale=1]
	
	\draw (0,0) to node[left,pos=1/2]{$\omega_1$} (0,1.5) ;
	\draw (1,0) to node[left,pos=1/2]{$\omega_2$} (1,1.5) ;
	\draw (2,0) to node[left,pos=1/2]{$\omega_3$} (2,1.5) ;
		
	\end{tikzpicture}
	\caption{}
	\label{fig-0vert-d}
\end{subfigure}

\caption{Possible configurations with 0 vertex.}
	\label{fig-0vert}
\end{figure}
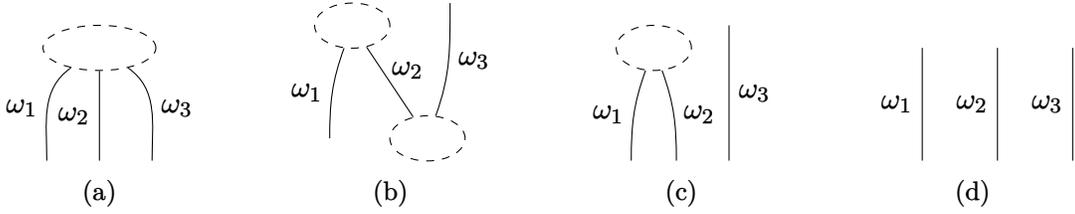

We deal with the different cases separately.

\begin{enumerate}[label=(\alph*)]
\item There are six possible markings. The contributions are summed up in table \ref{table-contrib-0vert-a}, where $(j,k,\ell)$ denotes the markings of the edges from left to right.

\begin{table}[h!]
\renewcommand{\arraystretch}{1.5}
\centering
\[ 
\begin{array}{|c||c|c|}
\hline
  & S  & S' \\
\hline
\hline
(i,i+1,i+2) & \dfrac{[\omega_1][\omega_2][\omega_1 + \omega_2]}{[2]} [\omega_3]^2 & \dfrac{[\omega_2][\omega_3][\omega_2 + \omega_3]}{[2]} [\omega_1]^2  \\
\hline 
(i,i+2,i+1) & \dfrac{[\omega_1][\omega_3][\omega_1 + \omega_3]}{[2]} [\omega_2]^2 & \dfrac{[\omega_2][\omega_3][\omega_2 + \omega_3]}{[2]} [\omega_1]^2   \\
\hline 
(i+1,i,i+2) & \dfrac{[\omega_1][\omega_2][\omega_1 + \omega_2]}{[2]} [\omega_3]^2 & \dfrac{[\omega_1][\omega_3][\omega_1 + \omega_3]}{[2]} [\omega_2]^2    \\
\hline 
(i+1,i+2,i) & \dfrac{[\omega_1][\omega_3][\omega_1 + \omega_3]}{[2]} [\omega_2]^2 & \dfrac{[\omega_1][\omega_2][\omega_1 + \omega_2]}{[2]} [\omega_3]^2   \\
\hline 
(i+2,i,i+1) & \dfrac{[\omega_2][\omega_3][\omega_2 + \omega_3]}{[2]} [\omega_1]^2 & \dfrac{[\omega_1][\omega_3][\omega_1 + \omega_3]}{[2]} [\omega_2]^2   \\
\hline 
(i+2,i+1,i) & \dfrac{[\omega_2][\omega_3][\omega_2 + \omega_3]}{[2]} [\omega_1]^2 & \dfrac{[\omega_1][\omega_2][\omega_1 + \omega_2]}{[2]} [\omega_3]^2    \\
\hline 
\end{array} \]
\caption{Contribution of the markings in case (a).}
\label{table-contrib-0vert-a}
\end{table}

The sums of the two columns are the same, so these marked floor diagrams give the same contributions to $G_g(\Delta,S)$ and $G_g(\Delta,S')$ and we take $P$ the set of these marked floor diagrams.

 Note that, depending on the unshown part of the diagram and on the precise value of the weights, some markings may give isomorphic marked diagrams : there may be only 3 or 1 marked floor diagram instead of 6. However, in that case some of the weight among $\omega_1$, $\omega_2$ and $\omega_3$ are equal, and removing the superfluous rows if the table does not affect the equality of the sums of the columns.

\item Similarly to the previous case we get table \ref{table-contrib-0vert-b}. We see that the sums of the two columns are the same, so these marked floor diagrams give the same contributions to $G_g(\Delta,S)$ and $G_g(\Delta,S')$ and we take $P$ the set of these marked floor diagrams.

\begin{table}[h!]
\renewcommand{\arraystretch}{1.5}
\centering
\[ 
\begin{array}{|c||c|c|}
\hline
  & S  & S' \\
\hline
\hline
(i,i+1,i+2) & \dfrac{[\omega_1][\omega_2][\omega_1 + \omega_2]}{[2]} [\omega_3]^2 & \dfrac{[\omega_2][\omega_3][\omega_2 + \omega_3]}{[2]} [\omega_1]^2  \\
\hline 
(i,i+2,i+1) & 0 & \dfrac{[\omega_2][\omega_3][\omega_2 + \omega_3]}{[2]} [\omega_1]^2   \\
\hline 
(i+1,i,i+2) & \dfrac{[\omega_1][\omega_2][\omega_1 + \omega_2]}{[2]} [\omega_3]^2 & 0   \\
\hline 
(i+1,i+2,i) & 0 & \dfrac{[\omega_1][\omega_2][\omega_1 + \omega_2]}{[2]} [\omega_3]^2   \\
\hline 
(i+2,i,i+1) & \dfrac{[\omega_2][\omega_3][\omega_2 + \omega_3]}{[2]} [\omega_1]^2 & 0  \\
\hline 
(i+2,i+1,i) & \dfrac{[\omega_2][\omega_3][\omega_2 + \omega_3]}{[2]} [\omega_1]^2 & \dfrac{[\omega_1][\omega_2][\omega_1 + \omega_2]}{[2]} [\omega_3]^2    \\
\hline 
\end{array} \]
\caption{Contribution of the markings in case (b).}
\label{table-contrib-0vert-b}
\end{table}

\item This is the same as in figure \ref{fig-1vert-e}. Let $m_k$ and $m'_k$ be the two markings where the right edge is $i+k$ for $k=0,1,2$. Both $m_1$ and $m'_1$ are incompatible with $S$ and $S'$, and the contributions of $m_0$ and $m'_0$ balance with those of $m_2$ and $m'_2$. Hence we take $P = \{ (\D,m_0), (\D,m'_0), (\D,m_1), (\D,m'_1), (\D,m_2), (\D,m'_2) \}$.

\item Any marking $m$ is incompatible with both $S$ and $S'$ \ie 
\[ \muSDm = \muSSDm = 0 \]
and take $P = \{(\D,m),\ m \text{ extension of the partial marking} \}$.
\end{enumerate}
\end{demo}

We can thus abusively write $G_g(\Delta,s)$ instead of $G_g(\Delta,S)$.

\begin{defi}\label{def-combi-GS-inv}
Let $\Delta$ be a $h$-transverse polygon, $g\in\N$, $s\in \N$ and $S$ be any pairing  of order $s$ of $\{1,\dots, 2\smax(\Delta,g) \}$. We define 
\[ G_g(\Delta,s) = \dsum_{(\D,m)} \mu_S(\D,m) \in \Z[q^{\pm1}] \]
where the sum runs over the isomorphism classes of marked floor diagrams with Newton polygon $\Delta$ and genus $g$.
The Laurent polynomial $G_g(\Delta,s)$ is called \emph{Göttsche-Schroeter (refined) invariant of genus $g$}.
\end{defi}

\subsection{Properties of the invariants}

In this section we prove few properties satisfied by the higher genus Göttsche-Schroeter invariant. We essentially adapt, when necessary, the proofs given by Brugallé and Jaramillo-Puentes in \cite{brugalle_polynomiality_2022} for the case of genus $0$ invariants.

\begin{prop}
Let $(\D,m)$ be a marked floor diagram of genus $g$, and $S_1 \subset S_2$ be two pairing of the set $\{1,\dots,n(\D)\}$. Then one has %$\mu_{S_1}(\D,m) \geq \mu_{S_2}(\D,m)$, \ie
$\mu_{S_1}(\D,m) - \mu_{S_2}(\D,m) \in \N[q^{\pm 1}]$. 
\end{prop}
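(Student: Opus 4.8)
The plan is to compare the two multiplicities $\mu_{S_1}(\D,m)$ and $\mu_{S_2}(\D,m)$ factor by factor. Since $S_1 \subset S_2$, every pair of $S_1$ is a pair of $S_2$, so the decomposition $E(\D) = E_0 \sqcup (\text{edges in } E_1\text{-pairs}) \sqcup (\text{edges in } E_2\text{-pairs})$ for $S_2$ is a refinement of the one for $S_1$: the only edges that change role are those lying in a pair $\alpha \in S_2 \setminus S_1$. For such a pair, under $S_1$ the two elements $m^{-1}(\alpha)$ contribute either a factor $[w(e)]^2[w(e')]^2$ (if both are edges not in any $S_1$-pair, i.e.\ both in $E_0(S_1)$), or a factor $[w(e)]^2[w(e')]_2$ (if one element of $\alpha$ is a vertex), while under $S_2$ they contribute $\tfrac{[w(e)][w(e')][w(e)+w(e')]}{[2]}$ or $[w(e)]_2$ respectively. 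First I would reduce to this local comparison: writing $\mu_{S_1}(\D,m) = C \cdot \prod_{\alpha} (\text{old factor}_\alpha)$ and $\mu_{S_2}(\D,m) = C \cdot \prod_{\alpha} (\text{new factor}_\alpha)$ over $\alpha \in S_2\setminus S_1$, with the same common factor $C \in \N[q^{\pm 1/2}]$ (a product of quantum-integer squares, quantum integers evaluated at $q^2$, and $[2]$-quotients, all of which have nonnegative coefficients).

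Next I would establish the two elementary inequalities in $\N[q^{\pm 1}]$. The vertex case is immediate: $[w(e)]^2 - [w(e)]_2 \in \N[q^{\pm 1}]$, since $[n]^2 = (q^{(n-1)/2}+\dots+q^{-(n-1)/2})^2$ has all coefficients $\geq 1$ in its support $\{q^{-(n-1)},\dots,q^{n-1}\}$ by counting, while $[n]_2 = q^{n-1}+q^{n-3}+\dots+q^{-(n-1)}$ has coefficients $0$ or $1$ on that same support; hence termwise $[n]^2 \geq [n]_2$. The two-edge case requires $[a]^2[b]^2 - \tfrac{[a][b][a+b]}{[2]} \in \N[q^{\pm1}]$; using Lemma \ref{lem-quantum}, $\tfrac{[a][b][a+b]}{[2]} = \tfrac12([a]^2[b]_2 + [a]_2[b]^2)$, so the difference is $[a]^2[b]^2 - \tfrac12([a]^2[b]_2+[a]_2[b]^2) = \tfrac12\big([a]^2([b]^2-[b]_2) + [b]^2([a]^2-[a]_2)\big)$, which is manifestly in $\tfrac12\N[q^{\pm1}]$; to see it is actually in $\N[q^{\pm1}]$ one notes $[a]^2-[a]_2$ has only even coefficients (it equals $2([a]^2 - \text{something})$?) — more cleanly, $[a]^2 = [a]_2 + ([a]^2-[a]_2)$ and a direct check shows $[a]^2 - [a]_2$ is the "doubled" polynomial $2\sum_{k} c_k q^{\dots}$, so each summand $[b]^2([a]^2-[a]_2)$ is already in $2\N[q^{\pm1}]$, making the halved sum integral.

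Finally I would assemble: a product of elements of $\N[q^{\pm1/2}]$ in which replacing one factor by a smaller one (in the coefficientwise order) decreases the whole product in that same order, provided everything in sight has nonnegative coefficients; iterating over the finitely many pairs $\alpha \in S_2 \setminus S_1$ gives $\mu_{S_1}(\D,m) - \mu_{S_2}(\D,m) \in \N[q^{\pm1/2}]$, and since both multiplicities are genuine Laurent polynomials in $q$ (half-integer powers cancel, as noted after Definition \ref{def-mult-GS}), the difference lies in $\N[q^{\pm1}]$. I expect the main obstacle to be the bookkeeping in the two-edge inequality, specifically checking carefully that $[a]^2 - [a]_2$ has all coefficients even so that the factor of $\tfrac12$ from Lemma \ref{lem-quantum} does not spoil integrality; this is the one spot where a genuine (if short) computation with the explicit expansions $[n]^2 = \sum_{j=0}^{n-1}(n-|n-1-2j|)\,q^{\,n-1-2j}\cdots$ is needed, and one must also handle the degenerate cases where some $w(e)+w(e') $ interacts with $[2]$ (e.g.\ $a$ or $b$ equal to $0$, which cannot occur for weights but should be noted). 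Everything else is the monotonicity of multiplication of polynomials with nonnegative coefficients.
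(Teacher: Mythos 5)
Your proof is correct and is, in substance, the argument the paper relies on: the paper does not actually write out a proof here but simply invokes the genus $0$ case (\cite[proposition 2.16]{brugalle_polynomiality_2022}), which rests on exactly the local comparison you describe --- the pairs of $S_2\setminus S_1$ are the only places where the factors of the two multiplicities differ, and one concludes from the termwise inequalities $[n]^2-[n]_2\in\N[q^{\pm1}]$ and $[a]^2[b]^2-\frac{[a][b][a+b]}{[2]}=\frac12\bigl([a]^2([b]^2-[b]_2)+[b]^2([a]^2-[a]_2)\bigr)\in\N[q^{\pm1}]$, the integrality of the latter following from the (correct) observation that all coefficients of $[n]^2-[n]_2$ are even. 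Two small points to clean up: the factor written as ``$[w(e)]^2[w(e')]_2$'' in your vertex--edge case should just be $[w(e)]^2$ (there is only one edge in that pair), and you should dispose in one line of markings compatible with $S_1$ but not with $S_2$, where $\mu_{S_2}(\D,m)=0$ and the claim reduces to $\mu_{S_1}(\D,m)\in\N[q^{\pm1}]$, which holds since every factor, including $\frac{[a][b][a+b]}{[2]}$, has nonnegative integer coefficients.
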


\begin{coro} \label{coro-decrease-with-s}
Let $\Delta$ be a $h$-transverse polygon and $g\in\N$. For any $i\in\N$ one has
\[ \ang{G_g(\Delta,0)}_i \geq \ang{G_g(\Delta,1)}_i \geq \dots \geq \ang{G_g(\Delta,\smax(\Delta,g))}_i . \]
%The sequence $(\<G_g(\Delta,s)\>_i)_{0 \leq s \leq s_{\max}}$ is decreasing and non-negative.
\end{coro}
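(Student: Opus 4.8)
The plan is to deduce Corollary~\ref{coro-decrease-with-s} directly from the preceding proposition by summing over marked floor diagrams. First I would recall that, by Theorem~\ref{theo-invS} and Definition~\ref{def-combi-GS-inv}, the invariant $G_g(\Delta,s)$ can be computed using \emph{any} pairing of order $s$ of $\{1,\dots,2\smax(\Delta,g)\} = \{1,\dots,n(\D)\}$ (note $n(\D) = y(\Delta)-1+g$, and $2\smax(\Delta,g)$ is either $n(\D)$ or $n(\D)-1$; in the latter case we simply work inside $\{1,\dots,n(\D)\}$). So to compare $G_g(\Delta,s)$ with $G_g(\Delta,s+1)$ it suffices to choose a \emph{nested} chain of pairings: fix $S_0 \subset S_1 \subset \dots \subset S_{\smax(\Delta,g)}$ where $S_t$ has order $t$, for instance $S_t = \{\{1,2\},\{3,4\},\dots,\{2t-1,2t\}\}$.

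Then the key step is the term-by-term comparison. For each $s$,
\[ G_g(\Delta,s) - G_g(\Delta,s+1) = \dsum_{(\D,m)} \left( \mu_{S_s}(\D,m) - \mu_{S_{s+1}}(\D,m) \right), \]
where the sum runs over isomorphism classes of marked floor diagrams with Newton polygon $\Delta$ and genus $g$. Since $S_s \subset S_{s+1}$, the preceding proposition gives $\mu_{S_s}(\D,m) - \mu_{S_{s+1}}(\D,m) \in \N[q^{\pm 1}]$ for every marked floor diagram $(\D,m)$. Hence the difference $G_g(\Delta,s) - G_g(\Delta,s+1)$ is a Laurent polynomial with nonnegative coefficients. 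Taking the coefficient of codegree $i$ (which is a nonnegative-coefficient linear functional, i.e.\ reading off one coefficient of a polynomial with coefficients in $\N$ yields a nonnegative integer) gives $\ang{G_g(\Delta,s)}_i \geq \ang{G_g(\Delta,s+1)}_i$ for all $i \in \N$. Chaining these inequalities for $s = 0,1,\dots,\smax(\Delta,g)-1$ yields the stated descending chain.

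I anticipate the only subtlety — hardly an obstacle — is bookkeeping about the index set: one must make sure that the pairings used to compute $G_g(\Delta,s)$ and $G_g(\Delta,s+1)$ are literally nested as subsets of the \emph{same} ground set $\{1,\dots,n(\D)\}$ so that the proposition applies verbatim, rather than invoking it for pairings living in different ranges; the invariance statement of Theorem~\ref{theo-invS} is precisely what licenses this choice. One also has to note that the coefficient functional $\ang{\cdot}_i$ being applied to an element of $\N[q^{\pm1}]$ produces something in $\N$ — this is immediate. Everything else is formal.
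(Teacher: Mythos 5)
Your argument is correct and is essentially the proof the paper intends: the paper simply defers to the proofs of \cite[proposition 2.16 and corollary 2.17]{brugalle_polynomiality_2022}, which proceed exactly as you do — fix a nested chain of pairings $S_0\subset S_1\subset\dots$ (licensed by the invariance theorem), apply the preceding proposition termwise to each marked floor diagram, and sum. Your bookkeeping remarks about keeping all pairings inside the same ground set $\{1,\dots,n(\D)\}$ are the right (and only) point of care.
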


\begin{demo}[Proofs]
The proofs of \cite[proposition 2.16 and corollary 2.17]{brugalle_polynomiality_2022} rely on calculations on quantum integers, and the genus does not play any role. Thus we can copy their proofs. 
\end{demo}

The decrease with respect to $S$ for $\mu_S(\D,m)$, and with respect to $s$ for $G_g(\Delta,s)$ can be observed in the examples of section \ref{subsec-ex-combi-GS-inv}.

\begin{prop} \label{prop-welsch}
Let $\Delta$ be a $h$-transverse polygon whose top is depicted in figure \ref{fig-welscha} and $\tilde\Delta$ be the polygon obtained in figure \ref{fig-welschb} by cutting of the top corner of $\Delta$. 
If $s \leq \smax(\Delta,g)$, then 
\[ G_g(\Delta,s+1) = G_g(\Delta,s) - 2 G_g(\tilde \Delta,s) . \]
\end{prop}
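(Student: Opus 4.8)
The plan is to compare, pairing by pairing, the floor-diagram count for $\Delta$ with $s+1$ pairs against the count for $\Delta$ with $s$ pairs together with the count for $\tilde\Delta$ with $s$ pairs. Following the strategy already used in the proof of Theorem~\ref{theo-invS} and in \cite{brugalle_polynomiality_2022}, I would fix a convenient pairing: on $\{1,\dots,2\smax(\Delta,g)\}$ take $S_{s+1} = \{\{1,2\},\dots,\{2s+1,2s+2\}\}$ of order $s+1$, and on the relevant index set for $\tilde\Delta$ take $S_s' = \{\{1,2\},\dots,\{2s-1,2s\}\}$ of order $s$; note that $n(\tilde\Delta) = n(\Delta)-1$ since cutting the top corner as in figures \ref{fig-welscha}--\ref{fig-welschb} removes exactly one lattice point from the boundary while keeping the genus. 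Because $G_g(\Delta,s+1)$ does not depend on the chosen pairing of order $s+1$, it suffices to prove the identity for these particular choices. The first step is therefore to set up a bijective/combinatorial correspondence between the marked floor diagrams of $\Delta$ in which the last pair $\{2s+1,2s+2\}$ plays a special role, and pairs consisting of (marked floor diagrams of $\Delta$ with only $s$ pairs) and (marked floor diagrams of $\tilde\Delta$ with $s$ pairs).

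The key geometric input is that cutting the top corner of $\Delta$ amounts, on the floor-diagram side, to deleting the top vertex $v_{\top}$ of a floor diagram of $\Delta$ (the vertex receiving no outgoing bounded edge, whose value of $R$ and $L$ is dictated by the corner being cut), together with its adjacent edges. So I would argue as follows: a marked floor diagram of $\Delta$ of genus $g$ with marking compatible with $S_{s+1}$, where the pair $\{2s+1,2s+2\}$ is carried by the two lowest labels among the elements incident to $v_{\top}$, restricts after deleting $v_{\top}$ and its incident edges to a marked floor diagram of $\tilde\Delta$; conversely every marked floor diagram of $\tilde\Delta$ extends in controlled ways by re-attaching $v_{\top}$. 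The numerical heart of the matter is then local, exactly as in case (a) of the ``1 vertex'' analysis in the proof of Theorem~\ref{theo-invS}: at the top vertex one has two edges of weights $\omega_1,\omega_2$ (or an edge and the vertex), and the three quantities $[\omega_1]^2[\omega_2]^2$ (the $S$-contribution with $s$ pairs, edge $\{2s+1,2s+2\}$ free), $[\omega_1]^2[\omega_2]_2$ and $[\omega_1]_2[\omega_2]^2$ (the two ways the pair $\{2s+1,2s+2\}$ can sit at $v_{\top}$), and $\dfrac{[\omega_1][\omega_2][\omega_1+\omega_2]}{[2]}$ (if the pair is an edge-plus-vertex at $v_{\top}$) must be reconciled, via Lemma~\ref{lem-quantum}, with the factor $[\omega_1]^2$ or $[\omega_2]^2$ coming from the $\tilde\Delta$-diagram times the combinatorial count of re-attachments. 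In particular I expect the coefficient $2$ and the sign in $-2G_g(\tilde\Delta,s)$ to emerge precisely from the two terms $[a]^2[b]_2 + [a]_2[b]^2 = 2[a][b][a+b]/[2]$ of Lemma~\ref{lem-quantum}, after cancellation of the ``edge free'' term against part of the $G_g(\Delta,s)$ sum.

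Concretely the steps would be: (i) choose the pairings above and reduce to them using Theorem~\ref{theo-invS}; (ii) partition the marked floor diagrams of $\Delta$ compatible with $S_{s+1}$ according to where the pair $\{2s+1,2s+2\}$ lies relative to the top vertex, separating the ones where this pair is entirely ``below'' $v_{\top}$ (these, with the last pair discarded, contribute exactly $G_g(\Delta,s)$ up to the usual pairing-independence) from the ones where it touches $v_{\top}$; (iii) for the latter, perform the corner-deletion and identify the image with marked floor diagrams of $\tilde\Delta$ together with a local weight bookkeeping; (iv) apply Lemma~\ref{lem-quantum} to the local weights to extract the factor $-2$; (v) reassemble the sums. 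The main obstacle I anticipate is step (iii): making the corner-deletion correspondence genuinely bijective requires care about the constraints on $L(v_{\top})$, $R(v_{\top})$ and $\div(v_{\top})$ imposed by the shape of the cut corner in figure~\ref{fig-welscha}, about whether the edges at $v_{\top}$ are infinite (sinks) or bounded, and about isomorphism classes when some weights coincide; a secondary subtlety is ensuring that the labels $\{1,\dots,2s\}$ used for $\tilde\Delta$ line up with those of $\Delta$ after removing three top elements, which is exactly the kind of index reindexing that makes the ``partial marking'' device of the previous proof useful here as well.
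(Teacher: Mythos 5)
The paper's own ``proof'' is a one-line reference to \cite[proposition 2.19]{brugalle_polynomiality_2022}, and your high-level plan (fix two pairings differing by one pair, localize the effect of that pair, invoke Lemma~\ref{lem-quantum}) is the right spirit. But two of your concrete choices would derail the execution. First, the bookkeeping of the cut is wrong: passing from figure~\ref{fig-welscha} to figure~\ref{fig-welschb} deletes three boundary lattice points and turns one interior point into a boundary point, so $y(\tilde\Delta)=y(\Delta)-2$ and $n(\tilde\Delta)=n(\Delta)-2$, not $n(\Delta)-1$; correspondingly $a(\tilde\Delta)=a(\Delta)-2$ and $e^{+\infty}(\tilde\Delta)=2$, so on the diagram side the cut does not amount to ``deleting the top vertex and its edges'' but to removing two floors and creating two weight-one sinks. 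Second, and more seriously, the new pair must sit at the top of the label range, i.e.\ be $\{n-1,n\}$ with $n=y(\Delta)-1+g$, not $\{2s+1,2s+2\}$. The hypothesis of the proposition concerns only the top of $\Delta$: since $e^{+\infty}(\Delta)=0$ there are no sinks, hence $m^{-1}(n)$ is forced to be a maximal floor and $m^{-1}(n-1)$ is forced to be either an edge entering it or a second maximal floor, and these are exactly the configurations that the shape of the cut corner controls and that can be matched with diagrams of $\tilde\Delta$. With your choice, $m^{-1}(2s+1)$ and $m^{-1}(2s+2)$ live near the bottom of the diagram where the hypothesis gives no information, and your step (ii) is false as stated: the markings for which the new pair is ``below'' $v_{\top}$ contribute the sum of $\mu_{S_{s+1}}$ over that subset, which is not $G_g(\Delta,s)$, since $\mu_{S_{s+1}}$ and $\mu_{S_s}$ already differ whenever the new pair meets an edge of weight at least $2$ anywhere in the diagram.

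Once the new pair is $\{n-1,n\}$, the relevant quantum identity is not $[a]^2[b]_2+[a]_2[b]^2=2[a][b][a+b]/[2]$ but the first equality of Lemma~\ref{lem-quantum} specialized to $a=1$, $b=w-1$, namely $[w]^2-[w]_2=2[w][w-1]/[2]$: it measures the loss incurred when the edge of weight $w$ entering $m^{-1}(n)$ becomes paired with that floor, while the markings in which $m^{-1}(n-1)$ and $m^{-1}(n)$ are two distinct maximal floors simply become incompatible and each contribute $-\mu_S(\D,m)$ to the difference (these occur in pairs obtained by exchanging the two top labels, which is where part of the factor $2$ comes from); both losses are then matched against $2G_g(\tilde\Delta,s)$. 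You can calibrate the bookkeeping on $\Delta_a$ of figure~\ref{fig-ex-poly-d3} with the transition $s=3\to s=4$: there $n=8$, the new pair is $\{7,8\}$, the entire drop $G_0(\Delta_a,3)-G_0(\Delta_a,4)=2$ comes from the two markings of the diagram of figure~\ref{fig-ex-diag-d3-c} whose labels $7,8$ are the two maximal vertices, and $\tilde\Delta$ indeed has a single marked diagram of multiplicity $1$.
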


\begin{figure}[h!]
	\begin{subfigure}[t]{0.49\textwidth}
	\centering
	\begin{tikzpicture}[scale=1/2]
		\draw (0,0) node {$\bullet$} to (0,3) node {$\bullet$} to (3,0) node {$\bullet$} ;
		\node at (0,1) {$\bullet$} ;
		\node at (0,2) {$\bullet$} ;
		\node at (2,1) {$\bullet$} ;
		\node at (1,2) {$\bullet$} ;
		\node at (1,1) {$\bullet$} ;
        \node at (1,0) {$\bullet$} ;
        \node at (2,0) {$\bullet$} ;
		\draw[dashed] (0,-1) to (0,0) ;
		\draw[dashed] (3,0) to (4,-1) ;
	\end{tikzpicture}
	\caption{$\Delta$}
	\label{fig-welscha}
\end{subfigure}	
\begin{subfigure}[t]{0.49\textwidth}
	\centering
	\begin{tikzpicture}[scale=1/2]
		\draw (0,0) node {$\bullet$} to (0,1) node {$\bullet$} to (2,1) node {$\bullet$} to (3,0) node {$\bullet$} ;
		\node at (0,2) {$\bullet$} ;
		\node at (1,2) {$\bullet$} ;
		\node at (1,1) {$\bullet$} ;
		\node at (0,3) {$\bullet$} ;
        \node at (1,0) {$\bullet$} ;
        \node at (2,0) {$\bullet$} ;
		\draw[dashed] (0,-1) to (0,0) ;
		\draw[dashed] (3,0) to (4,-1) ;
	\end{tikzpicture}
	\caption{$\tilde\Delta$}
	\label{fig-welschb}
\end{subfigure}
	\caption{}
	\label{fig-welsch}
\end{figure}

\begin{demo}
The proof is analogous to the one of \cite[proposition 2.19]{brugalle_polynomiality_2022}.
\end{demo}

We now extend \cite[theorem 1.7]{brugalle_polynomiality_2022} to arbitrary genus.

\begin{theo} \label{theo-GSinv-poly-s}
Let $\Delta$ be a $h$-transverse polygon and $g \leq \gmax(\Delta)$. If $2i \leq e^{-\infty}(\Delta)$ and $i \leq \gmax(\Delta)$, then the values $\< G_g(\Delta,s) \>_i$ for $0 \leq s \leq \smax(\Delta,g)$ are interpolated by a polynomial of degree $i$, whose leading coefficient is $\frac{(-2)^i}{i!} \binom{g_{\max}-i}{g}$.
\end{theo}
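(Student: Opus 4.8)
The strategy is to fix the genus $g$ and study how each marked floor diagram contributes to the coefficient $\langle G_g(\Delta,s)\rangle_i$ as $s$ increases from $0$ to $\smax(\Delta,g)$. The key observation is that in the definition of $\mu_S(\D,m)$, passing from a pairing $S$ of order $s$ to one of order $s+1$ replaces one factor of $[w(e)]^2$ (for $e\in E_0$) either by $[w(e)]_2$ (if the new pair is an edge and an adjacent vertex) or merges two factors $[w(e)]^2[w(e')]^2$ into $\frac{[w(e)][w(e')][w(e)+w(e')]}{[2]}$. Since $[w]^2 - [w]_2$ and similar differences have controlled codegree, the effect on the codegree-$i$ coefficient is polynomial in $s$. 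More precisely, I would aim to show that $\langle G_g(\Delta,s)\rangle_i$ is a polynomial in $s$ by exhibiting, following the genus-$0$ argument of \cite[theorem 1.7]{brugalle_polynomiality_2022}, a finite expansion $\langle G_g(\Delta,s)\rangle_i = \sum_{j=0}^{i} c_j \binom{s}{j}$ where the $c_j$ count certain marked diagrams with prescribed codegree contributions.

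\textbf{Main steps.} First, using Theorem \ref{theo-invS}, I may choose the convenient pairing $S = \{\{1,2\},\{3,4\},\dots,\{2s-1,2s\}\}$, so that $G_g(\Delta,s)$ is computed with this specific $S$ of order $s$; then adding a pair means extending $S$ by $\{2s+1,2s+2\}$. Second, I would set up an inclusion-exclusion / binomial expansion: writing each $\mu_S(\D,m)$ in terms of the "default" multiplicity (all edges contributing $[w(e)]^2$) times correction factors indexed by which pairs of $S$ actually act nontrivially on $(\D,m)$, one obtains $\langle G_g(\Delta,s)\rangle_i = \sum_{j\geq 0}\binom{s}{j} a_j$ where $a_j$ is a sum over configurations where exactly $j$ pairs act. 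Third, the codegree bound: each nontrivial action of a pair lowers degree, hence raises codegree; a careful bookkeeping (as in the genus-$0$ case, where operations $A^\pm$ of Lemma \ref{lem-opeAB} decrease codegree) shows that a configuration where $j$ pairs act contributes to codegree $\geq i$ only if $j \leq i$, giving that the polynomial has degree $\leq i$. Fourth, to identify the leading coefficient, I would analyze the $j=i$ term: this forces each of the $i$ active pairs to act in the "minimal" way that raises codegree by exactly $1$, which (as in \cite{brugalle_polynomiality_2022}) corresponds to pairs sitting on two weight-$1$ infinite edges leaving a common vertex, contributing the factor $\frac{[1][1][2]}{[2]} = 1$ versus the default $[1]^2 = 1$ — so I need the more refined bookkeeping where these pairs act on edges one of which has weight $\geq 2$. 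The count of such minimal configurations, in the limit, should reduce to: choose $i$ of the $\gmax$ interior lattice points to "use up" for the codegree, of which $g$ are used for the genus, leaving $\binom{\gmax - i}{g}$ choices, with the $\frac{(-2)^i}{i!}$ coming from the $i$ independent sign-$(-2)$ corrections and the symmetrization over the $i$ active pairs.

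\textbf{The main obstacle.} The delicate point is making the leading-coefficient computation rigorous in arbitrary genus. In genus $0$, the relevant diagrams are essentially trees and the combinatorics of "which $i$ of the interior points contribute to codegree $i$" is transparent; for genus $g$ one must carefully separate the $g$ units of codegree "spent" on the first Betti number from the $i$ units coming from the active pairs, and check there is no interference — i.e. that the count of minimal-codegree diagrams of genus $g$ with $i$ active pairs factors as (a universal constant depending on $i$) times $\binom{\gmax - i}{g}$. I expect this requires reinterpreting the codegree via the dual subdivision of $\Delta$: a diagram of genus $g$ and codegree $c$ corresponds to a subdivision using $g + c$ of the $\gmax$ interior points in a specific way, and the condition "$i$ pairs can act to raise codegree to exactly $i$" picks out, in the leading order as $\smax \to \infty$, configurations counted by $\binom{\gmax - i}{g}$. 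I would also need to verify the hypotheses $2i \leq e^{-\infty}(\Delta)$ (ensuring enough horizontal boundary edges, hence enough weight-$1$ sources, for the active pairs to live on) and $i \leq \gmax(\Delta)$ are exactly what is needed for this asymptotic count to be valid for all $s$ in the range. Modulo this bookkeeping, the proof should closely parallel \cite[theorem 1.7]{brugalle_polynomiality_2022}, with the genus entering only through the binomial factor $\binom{\gmax - i}{g}$.
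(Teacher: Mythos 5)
Your overall strategy --- an inclusion--exclusion over which of $i$ designated pairs are ``active'', i.e.\ an $i$-th discrete derivative in $s$, combined with a codegree bound showing that only diagrams of codegree exactly $i$ survive --- is the same as the paper's. The paper makes this precise by fixing a pairing $S$ of $\{2i+1,\dots,y(\Delta)-1+g\}$ of order $s$, setting $S^I=S\cup\bigcup_{j\in I}\{\{2j-1,2j\}\}$ for $I\subset\{1,\dots,i\}$, and evaluating $\sum_{I}(-1)^{|I|}\mu_{S^I}(\D,m)$; the hypothesis $2i\leq e^{-\infty}(\Delta)$ enters exactly where you guessed, in the inequality $\codeg(\D)\geq e^{-\infty}(\Delta)-|J|-|K|\geq i$ that forces the surviving diagrams to have all of $m^{-1}(\{1,\dots,2i\})$ among the sources attached to the two lowest floors. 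One point where your mechanism is off: in the surviving configurations the two sources of each pair $\{2j-1,2j\}$ sit on \emph{different} floors, so every $S^I$ with $I\neq\emptyset$ is \emph{incompatible} with the marking and only the base term $\mu_S(\D,m)$ survives in the alternating sum; the factor $2^i$ is the number of ways to split the $i$ pairs between the lowest and second-lowest floor, not a product of ``sign-$(-2)$ corrections'' to multiplicities (for weight-$1$ sources the paired and unpaired multiplicities are both $1$, so no multiplicity correction ever occurs there).

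The genuine gap is the one you flag yourself: the count of extremal genus-$g$ diagrams producing the factor $\binom{\gmax-i}{g}$. This is precisely the new content of the theorem relative to the genus-$0$ case of \cite[theorem 1.7]{brugalle_polynomiality_2022}, and your proposal to reach it through dual subdivisions is not carried out and is not the route the paper takes. The paper's argument is elementary and direct: the surviving diagrams are ``caterpillars'' in which the order $\prec$ is total on the floors and every bounded edge joins consecutive floors, so each one is obtained from the unique extremal genus-$0$ diagram $(\D_0,m_0)$ for a given $J$ by choosing an ordered decomposition $g=g_1+\cdots+g_{a-1}$ and splitting the edge of weight $w_k$ between $v_k$ and $v_{k+1}$ into $g_k+1$ parallel edges, which can be weighted and marked in $\binom{w_k-1}{g_k}$ ways; Vandermonde's identity then gives
\[
\sum_{g_1+\cdots+g_{a-1}=g}\ \prod_{k=1}^{a-1}\binom{w_k-1}{g_k}=\binom{\sum_k(w_k-1)}{g}=\binom{\deg(\D_0)}{g}=\binom{\gmax-i}{g}.
\]
Without this identification of the surviving genus-$g$ diagrams (including the exclusion of configurations where the genus is created by non-consecutive floors, figure \ref{genusno}) and the Vandermonde computation, the leading coefficient is asserted rather than proved, so the proposal as written does not establish the theorem.
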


\begin{demo}
The beginning of the proof is as in \cite[theorem 1.7]{brugalle_polynomiality_2022}, hence we will not give the details of the computations before step 2(b) below. Let first introduce few notations.

We denote by $a_i$ the polynomial of degree at most $\smax = \smax(\Delta,g)$ which interpolates the values $(\< G_g(\Delta,s) \>_i)_{0 \leq s \leq \smax}$. Its $i$-th discrete derivative $a_i^{(i)}$ has degree at most $\smax-i$, and we want to show that 
\[ a_i^{(i)}(0) = \dots = a_i^{(i)}(s_{\max}-i) = 2^i \binom{\gmax-i}{g} .\]

	Let $0 \leq s \leq \smax-i$ and $S$ be a pairing of order $s$ of $\{ 2i+1,\dots,y(\Delta)-1+g \}$. For $I \subset \{1,\dots, i\}$ we denote 
\[ S^I = S \cup \bigcup_{j\in I} \{ \{ 2j-1,2j \}  \} \]
the pairing of order $s+|I|$ of $\{1,\dots,y(\Delta)-1+g\}$.
	Given $(\D,m)$ a marked floor diagram with Newton polygon $\Delta$ and genus $g$ we define
	\[ \kappa(\D,m) = \dsum_{\ell =1}^i \dsum_{\substack{I \subset \{1,\dots,i\} \\ |I| = \ell}} (-1)^\ell \mu_{S^I}(\D,m).  \]
	One has
	\[ \dsum_{j=-\gmax+g}^{\gmax-g} a^{(i)}_{\gmax-g-|j|}(s)q^j = \dsum_{(\D,m)} \kappa(\D,m)  \]
	where the sum runs over the isomorphism classes of marked floor diagrams of Newton polygon $\Delta$ and genus $g$. Hence the diagrams with degree at least $\gmax-g-i$, \ie codegree at most $i$, contribute to $a_i^{(i)}$.
	
	Let $(\D,m)$ be such a diagram. Denote by $i_0$ the minimal element of $\{1,\dots,n(\D)\}$ such that $m^{-1}(i_0) \in V(\D)$, and by $J \subset \{1,\dots,2i\}$ the set of elements $j$ such that $m^{-1}(j)$ is an elevator in $E^{-\infty}(\D)$ adjacent to $m^{-1}(i_0)$.

\tocless{\subsection*{Step 1.}} 
If $J\cup \{i_0\}$ contains a pair $\{2k-1,2k\}$ with $k\leq i$, then $\kappa(\D,m) = 0$.

We assume from now on that $J\cup \{i_0\}$ does not contain any pair $\{2k-1,2k\}$ with $k\leq i$. In particular, $|J| \leq i$.

\tocless{\subsection*{Step 2(a).}}
If $i_0 \leq 2i$ then $\kappa(\D,m)$ does not contribute to $a_i^{(i)}(s)$.

\tocless{\subsection*{Step 2(b).}}
Suppose now that $i_0 > 2i$. In particular, $m(\{1,\dots,2i\}) \subset E^{-\infty}(\D)$. Let $K \subset \{ 2i+1,\dots, y(\Delta)-1 +g \}$ be the set of elements $k$ such that $m(k)$ is an elevator in $E^{-\infty}(\D)$ adjacent to $m(i_0)$ ; one has $|K| \leq e^{-\infty}(\Delta) - 2i$. Hence by lemma \ref{lem-opeAB} one has
\[ \codeg(\D) \geq e^{-\infty}(\Delta)  - |J| - |K| \geq e^{-\infty}(\Delta)  - i - (e^{-\infty}(\Delta) -2i) = i \]
so $\D$ can contribute to $a_i^{(i)}(s)$ if and only if $\codeg(\D) = i-g$, which implies $|J| = i$ and $|K| = e^{-\infty}(\Delta) -2i$. Thus, $i$ elevators in $E^{-\infty}(\D)$ are not adjacent to $m(i_0)$ and they are the only elements creating codegree in $\D$. Hence, $\D$ contributes to $a_i^{(i)}(s)$ if and only if the following set of conditions is satisfied :

\begin{itemize}
\item the order $\prec$ is total on $V(\D)$,

\item elevators in $E^{+\infty}(\D)$ are all adjacent to the top floor,

\item $|J| = i$ and $J$ contains no pair $\{2k-1, 2k\}$,

\item $m(\{1,\dots,2i\}\setminus J)$ consists exactly of elevators in $E^{-\infty}(\D)$ adjacent to the second lowest floor,

\item $E^{-\infty}(\D) \setminus m(\{ 1,\dots,2i \})$ consists of elevators adjacent to the lowest floor,

\item the function $L : V(\D) \to \bleft(\Delta)$ and $R : V(\D) \to \bright(\Delta)$ are increasing,

\item any bounded edge is between two consecutive vertices, \ie the genus is created only by configurations of figure \ref{genus} ; there is no configuration of figure \ref{genusno}. 
\end{itemize}

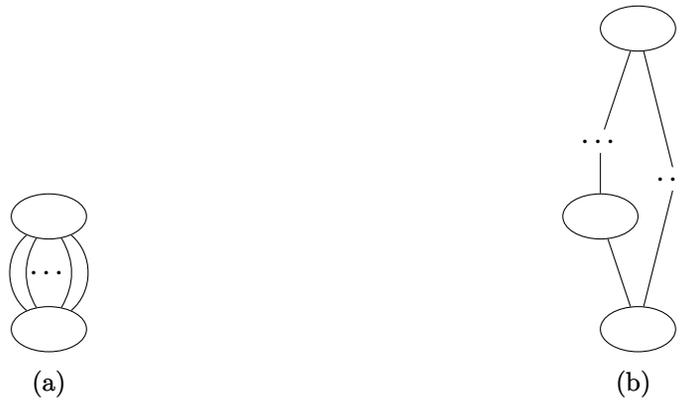
\begin{figure}[h!]
	\begin{subfigure}[t]{0.49\textwidth}
	\centering
	\begin{tikzpicture}[scale=1]	
	
		\floor (1) at (0,0) {} ;
		\floor (2) at (0,1.5) {} ;
		\draw (1) to[out=40, in=-40] (2) ;
		\draw (1) to[out=60, in=-60] (2) ;
		\draw (1) to[out=120, in=-120] (2) ;
		\draw (1) to[out=140, in=-140] (2) ;
		\node at (0,0.75) {$\dots$} ;
				
	\end{tikzpicture}
	\caption{}
	\label{genus}
\end{subfigure}
\begin{subfigure}[t]{0.49\textwidth}
	\centering
	\begin{tikzpicture}[scale=1]
	
		% \floor (1) at (0,0) {} ;
		% \floor (2) at (-0.5,1.5) {} ;
		% \floor (3) at (0.5,4) {} ;
		% \draw (1) to (2) ;
		% \draw (1) to[out=40, in=-40] (3) ;
		% \draw (1) to[out=70, in=-80] (3) ;
		% \draw (1) to[out=80, in=-90] (3) ;
		% \node at (1,2) {$\dots$} ;
		
		% \draw[dashed] (3) to (1/8,49/16) ;
		% \draw[dashed] (2) to (-1/8,39/16) ;
		% \node at (0,11/4) {$\dots$} ;
	
		\floor (1) at (0,0) {} ;
		\floor (2) at (-0.5,1.5) {} ;
		\node (3) at (-0.5,2.5) {$\dots$} ;
        \node (4) at (0.5,2) {$\dots$} ;
		\floor (5) at (0,4) {} ;
  
		\draw (1) to (2) ;
        \draw (2) to (3) ;
        \draw (3) to (5) ;
        \draw (1) to (4) ;
        \draw (4) to (5) ;

	\end{tikzpicture}
	\caption{}
	\label{genusno}
\end{subfigure}	
	\caption{Possible configuration for the genus.}
	\label{gen}
\end{figure}

	The first conditions are those of \cite{brugalle_polynomiality_2022}, and the last is added to take into account the genus.
	These conditions ensure that the marked floor diagrams which contribute to $a_i^{(i)}(s)$ all satisfy $\kappa(\D,m) = \muSDm$ and have the shape depicted in figure \ref{fig-formediag}, where $a=a(\Delta)$ is the number of vertices.

\begin{figure}[h!]
	\centering
	\begin{tikzpicture}[scale=1]
	% sommets
		\floor (1) at (0,1.5) {\scriptsize $v_1$} ;
		\floor (2) at (1.5,3) {\scriptsize $v_2$} ;
		\floor (3) at (1.5,4.5) {\scriptsize $v_3$} ;
		\floor (4) at (1.5,6.5) {} ;
		\floor (5) at (1.5,8) {\scriptsize $v_a$} ;
		
	% arêtes au milieu		
		\draw (1) to[out=30, in=-120] (2) ;
		\draw (1) to[out=80, in=-150] (2) ;
		\node at (0.7,9/4) {\scriptsize $\dots$} ;
		
		\draw (2) to[out=60, in=-60] (3) ;
		\draw (2) to[out=120, in=-120] (3) ;
		\node at (1.5,15/4) {\scriptsize $\dots$} ;
		
		\draw[dashed] (3) to (1.5,5.1) ;
		\draw[dashed] (4) to (1.5,5.9) ;
		\node at (1.5,5.5) {\scriptsize $\dots$} ;
		
		\draw (4) to[out=60, in=-60] (5) ;
		\draw (4) to[out=120, in=-120] (5) ;
		\node at (1.5,29/4) {\scriptsize $\dots$} ;
		
	% arêtes en bas
		\draw (1) to[out=-120, in=90] (-0.4,0) ;
		\draw (1) to[out=-60, in=90]  (0.4,0);
		\node at (0,0.4) {\scriptsize $\cdots$} ;
		\node at (0,-0.4) {\large $\underbrace{\ \ \ \ \ \ }_{d_b\Delta-i}$} ;
		
		\draw (2) to[out=-110, in=90] (1.1,0) ;
		\draw (2) to[out=-70, in=90]  (1.9,0);
		\node at (1.5,0.4) {\scriptsize $\cdots$} ;
		\node at (1.5,-0.4) {\large $\underbrace{\ \ \ \ \ \ }_{i}$} ;
		
	% arêtes en haut		
		\draw (5) to[out=120, in=-90] (1.1,9.5) ;
		\draw (5) to[out=60, in=-90]  (1.9,9.5);
		\node at (1.5,9.1) {\scriptsize $\cdots$} ;
		
	\end{tikzpicture}
	\caption{The diagrams that contributes to $a_i^{(i)}(s)$.}
	\label{fig-formediag}
\end{figure}
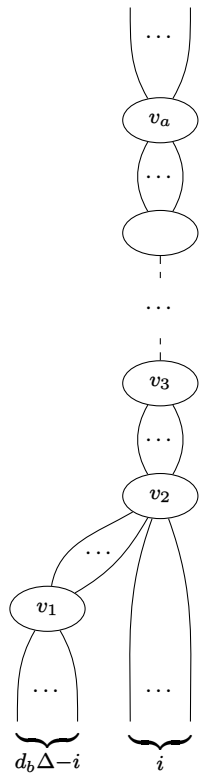

	There are $2^i$ possible choices for $J$, and given a $J$ it remains to determine how many marked diagrams of genus $g$ have a marking that corresponds to $J$. Starting with the unique marked diagram $(\D_0,m_0)$ of genus 0 corresponding to $J$, we need to 	choose a decomposition $g=g_1 + \dots + g_{a-1}$, and then split the unique edge between $v_j$ and $v_{j+1}$ in $g_j+1$ edges. If the weight of the edge is $w_j$, then there are $\binom{w_j-1}{g_j}$ ways to divide the weight and to mark  the new edges. Hence the total number of marked diagrams for a given $J$ is
\[ \sum_{\substack{g_1+\cdots + g_{a-1}=g \\ g_j \geq 0,\text{ ordered} }} \prod_{k=1}^{a-1} \binom{w_k-1}{g_k}  \]
which is just
\[ \binom{\dsum_{k=1}^{a-1}(w_k-1)}{g} = \binom{\deg(\D_0)}{g} = \binom{\gmax - \codeg(\D_0)}{g} = \binom{\gmax-i}{g}. \]
Hence the total number of marked diagrams is $2^i \binom{\gmax-i}{g}$.
Since the dominant coefficients of the multiplicities are 1 we conclude.
\end{demo}

\subsection{Link with other invariants}

In this section we show that the combinatorial Göttsche-Schroeter invariant matches the invariant of \cite[theorem 6.7]{shustin-2024-refined}. 
We refer to \cite[section 6]{shustin-2024-refined} for the definition of $RB_q(\Delta,g,(n_1,n_2))$, especially definition 6.2 for the multiplicity and remark 6.3. Note that this invariant is a count of tropical curves, while the combinatorial Göttsche-Schroeter invariant is a count of floor diagrams.

Remember we denote 
\[ [n](q) = \dfrac{q^{n/2}-q^{-n/2}}{q^{1/2}-q^{-1/2}},\ [n]^2 = [n](q)^2 \text{ and } [n]_2 = [n](q^2). \]
We also set
\[ \{n\}(q) = \dfrac{q^{n/2}+q^{-n/2}}{q^{1/2}+q^{-1/2}} . \]

\begin{prop}\label{prop-equal-shustin}
Let $\Delta$ be a $h$-transverse polygon, $g\in\N$ and $s\in\N$. The combinatorial Göttsche-Schroeter invariant corresponds to the invariant of \cite{shustin-2024-refined}, \ie
\[ G_g(\Delta,s)(q) = RB_q(\Delta,g,(y(\Delta)-1+g-2s,s)) . \]
\end{prop}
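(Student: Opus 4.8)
The plan is to establish a bijection between the two enumerative objects --- the marked floor diagrams of Newton polygon $\Delta$ and genus $g$ on one side, and the tropical curves of \cite{shustin-2024-refined} satisfying the relevant incidence conditions on the other --- and then to verify that this bijection is multiplicity-preserving. Since $\Delta$ is assumed $h$-transverse, the floor decomposition technique of Brugall\'e--Mikhalkin applies: a tropical curve passing through a configuration of points stretched in a vertical direction decomposes into floors and elevators, and this decomposition is faithfully encoded by a marked floor diagram. Concretely, I would first recall from \cite{shustin-2024-refined} the precise definition of $RB_q(\Delta,g,(n_1,n_2))$ as a weighted count of tropical curves through $n_1$ points in general position plus $n_2$ pairs of points constrained to lie on the ends of a configuration (the tropical analogue of complex conjugate pairs), and match the numerology: the number of points is $y(\Delta)-1+g$ in total, with $n_1 = y(\Delta)-1+g-2s$ free points and $n_2 = s$ pairs, which is exactly $n(\D) = y(\Delta)-1+g$ elements to be marked, with $s$ of the pairs in the pairing $S$.

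Second, I would set up the correspondence itself. Choosing the point configuration so that it is ``stretched'' and compatible with a chosen pairing $S$ of order $s$, each tropical curve contributing to $RB_q$ degenerates to a floor diagram, and the $s$ pairs of complex-conjugated-type points translate into the compatibility condition of definition (a marking $m$ compatible with $S$): a pair of constrained points forces either an edge-and-adjacent-vertex or two edges entering/leaving a common vertex, which is precisely the trichotomy $E_0, E_1, E_2$ from the paper. Conversely, each marked floor diagram lifts to a unique tropical curve of the prescribed combinatorial type. This part is essentially the content of \cite{brugalle_floor_2008} adapted to the refined/constrained setting, so I would cite it and spell out only the bookkeeping of how the pairing matches the point constraints.

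Third, I would check the multiplicities agree term by term. The refined $S$-multiplicity $\mu_S(\D,m)$ is a product over $E_0$ of $[w(e)]^2$, over $E_1$ of $[w(e)]_2$, and over $E_2$ of $\frac{[w(e)][w(e')][w(e)+w(e')]}{[2]}$. On the tropical side, the refined vertex multiplicities of \cite[definition 6.2]{shustin-2024-refined} assign to each vertex a factor built from quantum integers, and to an edge constrained by a pair of conjugate points a modified factor; I would show that collecting the vertex factors along a floor and the edge factors of the elevators reproduces exactly $\mu_S(\D,m)$ --- the key algebraic identities being those of Lemma \ref{lem-quantum}, which convert between the ``$[a]^2[b]_2 + [a]_2[b]^2$'' form (two conjugate ends on a single vertex, the $E_2$ case after splitting) and the ``$[2]^{-1}[a][b][a+b]$'' form. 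Finally, since both sides are now established as pairing-independent invariants (the left by Theorem \ref{theo-invS}, the right by the main theorem of \cite{shustin-2024-refined}), it suffices to verify the identity for one convenient choice of $S$, say $S = \{\{1,2\},\dots,\{2s-1,2s\}\}$, which makes the stretched configuration and the floor decomposition maximally explicit.

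The main obstacle I expect is the third step: reconciling the two multiplicity conventions. The floor-diagram multiplicity $\mu_S$ is packaged edge-by-edge, whereas the tropical refined multiplicity of \cite{shustin-2024-refined} is packaged vertex-by-vertex (and their treatment of the conjugate-pair constraint may be distributed differently across the curve), so the equality is not literally a product of matching factors but requires a regrouping argument, very much in the spirit of the computations in Lemma \ref{lem-quantum} and in the proof of Theorem \ref{theo-invS}. A secondary subtlety is ensuring the point configuration can simultaneously be chosen generic enough for the correspondence theorem of \cite{shustin-2024-refined} to apply and stretched enough for the floor decomposition --- this is standard but must be stated carefully, and one should make sure the genus-$g$ case introduces no new degenerate contributions beyond those already handled in \cite{brugalle_floor_2008} and in the genus-$0$ argument of \cite{brugalle_polynomiality_2022}.
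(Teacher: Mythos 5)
Your plan follows the same route as the paper's proof: use the floor decomposition to identify the two enumerations, then match the multiplicities configuration by configuration. Two of your predictions about the details diverge from what is actually needed, and are worth flagging. First, the multiplicity matching does not rest on Lemma \ref{lem-quantum}: the identities required are much lighter, namely $[w]\cdot\{w\}=[w]_2$ for an edge paired with an adjacent vertex (matching the marked-vertex factor $\{w\}$ on the tropical side) and the trivial regrouping $[\omega_1+\omega_2]\cdot\tfrac{2}{[2]}\cdot\tfrac{[\omega_1][\omega_2]}{[\omega_1+\omega_2]}\cdot[\omega_1+\omega_2]=2\,\tfrac{[\omega_1][\omega_2][\omega_1+\omega_2]}{[2]}$ for a paired couple of edges. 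Lemma \ref{lem-quantum} is the engine of Theorem \ref{theo-invS}, not of this proposition, and no conversion between the ``$[a]^2[b]_2+[a]_2[b]^2$'' and ``$[a][b][a+b]/[2]$'' forms occurs here. Second, there is a subtlety your sketch misses: when the two paired edges join the same two floors and have equal weights $\omega_1=\omega_2$, the floor diagram admits a single marking instead of two, and the missing factor $2$ must be recovered from the nontrivial automorphism of the corresponding tropical curve (the centrally embedded cycle); overlooking this would make the correspondence fail by a factor of $2$ on exactly those terms. Finally, the completeness of the case analysis on the tropical side is not automatic and is supplied by \cite[lemma 5.6]{shustin-2024-refined}. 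Your proposed reduction to one convenient pairing $S$ via invariance on both sides is a legitimate simplification, though the paper's local argument makes it unnecessary.
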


\begin{proof}
We will show a correspondence between the multiplicities used to compute both quantities. To do so, we examine the different terms appearing in the products that define both multiplicities.

\begin{figure}[h!]
	\begin{subfigure}[t]{0.49\textwidth}
	\centering
	\begin{tikzpicture}[scale=1]	
	
		\floor (1) at (0,0) {} ;
		\floor (2) at (0,1.5) {} ;
		\draw (1) to node[left,pos=1/2]{$\omega$} (2) ;
		
	\end{tikzpicture}
	\caption{}
	\label{fig-shustin1-a}
\end{subfigure}
\begin{subfigure}[t]{0.49\textwidth}
	\centering
	\begin{tikzpicture}[scale=1]
	
	\draw (0,1) to (1,1) to (2,2) ;
	\draw (1,1) to node[left,pos=1/2]{$\omega$} (1,0) ;
	\draw (0,0) to (1,0) to (2,-1) ;

	\end{tikzpicture}
	\caption{}
	\label{fig-shustin1-b}
\end{subfigure}	
	\caption{}
	\label{fig-shustin1}
\end{figure}

The situation of figure \ref{fig-shustin1-a} where an edge is unpaired in a floor diagram corresponds to the situation of figure \ref{fig-shustin1-b} at the level of tropical curves. In the floor diagram, the edge contributes $[w]^2$ to the multiplicity, while in the tropical curve the two adjacent vertices contribute $[w]\times[w] = [w]^2$. Hence the contributions are the same.

\begin{figure}[h!]
	\begin{subfigure}[t]{0.49\textwidth}
	\centering
	\begin{tikzpicture}[scale=1]	
	
		\floor (1) at (0,0) {} ;
		\floor (2) at (0,1.5) {$\bullet$} ;
		\draw (1) to node[left,pos=1/2]{$\omega$} node {$\bullet$} (2) ;
		
	\end{tikzpicture}
	\caption{}
	\label{fig-shustin2-a}
\end{subfigure}
\begin{subfigure}[t]{0.49\textwidth}
	\centering
	\begin{tikzpicture}[scale=1]
	
	\draw (0,1) to (1,1) to (2,2) ;
	\node at (1,1) {$\bullet$} ;
	\draw (1,1) to node[left,pos=1/2]{$\omega$} (1,0) ;
	\draw (0,0) to (1,0) to (2,-1) ;

	\end{tikzpicture}
	\caption{}
	\label{fig-shustin2-b}
\end{subfigure}	
	\caption{}
	\label{fig-shustin2}
\end{figure}

The situation of figure \ref{fig-shustin2-a} where an edge is paired with an adjacent vertex in a floor diagram corresponds to the situation of figure \ref{fig-shustin2-b} in the tropical curve, where a vertex adjacent to the corresponding edge is marked. In the floor diagram, the edge contributes $[w]_2$ to the multiplicity, while in the tropical curve the two adjacent vertices contribute $[w]\times\{w\} = [w]_2$. Hence the contribution are the same.

\begin{figure}[h!]
	\begin{subfigure}[t]{0.49\textwidth}
	\centering
	\begin{tikzpicture}[scale=1]	
	
		\floor (1) at (0,0) {} ;
		\floor (2) at (0,1.5) {} ;
		\draw (1) to[out=120, in=-120] node[left,pos=1/2]{$\omega_1$} node {$\bullet$} (2) ;
		\draw (1) to[out=60, in=-60] node[right,pos=1/2]{$\omega_2$} node {$\bullet$} (2) ;
		
	\end{tikzpicture}
	\caption{}
	\label{fig-shustin3-a}
\end{subfigure}
\begin{subfigure}[t]{0.49\textwidth}
	\centering
	\begin{tikzpicture}[scale=1]
	
	\draw (0,1) to (1,1) to (2,2) ;
	\draw (1,1) to node[right,pos=1/2]{$\omega_1+\omega_2$} (1,0) ;
	\draw (0.95,0) to node[left,pos=1/2]{$\omega_1$} (0.95,-1) ;
	\draw (1.05,0) to node[right,pos=1/2]{$\omega_2$} (1.05,-1) ;
	\node at (1,-1) {$\bullet$} ;
	\draw (1,-1) to node[right,pos=1/2]{$\omega_1+\omega_2$} (1,-2) ;
	\draw (0,-2) to (1,-2) to (2,-3) ;

	\end{tikzpicture}
	\caption{}
	\label{fig-shustin3-b}
\end{subfigure}	
	\caption{}
	\label{fig-shustin3}
\end{figure}

The situation of figure \ref{fig-shustin3-a} where two edges with two common adjacent vertices are paired corresponds to the situation of figure \ref{fig-shustin3-b} where there is a centrally embedded cycle (see \cite{shustin-2024-refined} for the terminology). Assume first that $\omega_1 \neq \omega_2$. At the level of floor diagrams, there are two possible markings and so the contribution to the multiplicity is
\[ 2 \dfrac{[\omega_1][\omega_2][\omega_1+\omega_2]}{[2]}. \]
At the level of tropical curves, the contribution to the multiplicity is
\[ [\omega_1+\omega_2] \times  \dfrac{2}{[2]}\dfrac{[\omega_1][\omega_2]}{[\omega_1+\omega_2]} \times [\omega_1+\omega_2] = 2 \dfrac{[\omega_1][\omega_2][\omega_1+\omega_2]}{[2]} ,\]
hence the contributions are the same.

If $\omega_1 = \omega_2$ then there is a single marking of the floor diagram, so the factor $2$ does not appear in the multiplicity. This is balanced by the fact that there is now a non-trivial automorphism of the tropical curve, hence we should also divide by $2$ the contribution to the multiplicity of the tropical curve.

\begin{figure}[h!]
	\begin{subfigure}[t]{0.49\textwidth}
	\centering
	\begin{tikzpicture}[scale=1]	
	
		\floor (2) at (0,1.5) {} ;
		\draw (-0.4,0) to[out=90, in=-120] node[left,pos=1/2]{$\omega_1$} node {$\bullet$} (2) ;
		\draw (0.4,0) to[out=90, in=-60] node[right,pos=1/2]{$\omega_2$} node {$\bullet$} (2) ;
		
	\end{tikzpicture}
	\caption{}
	\label{fig-shustin4-a}
\end{subfigure}
\begin{subfigure}[t]{0.49\textwidth}
	\centering
	\begin{tikzpicture}[scale=1]
	
	\draw (0,1) to (1,1) to (2,2) ;
	\draw (1,1) to node[right,pos=1/2]{$\omega_1+\omega_2$} (1,0) ;
	\node at (1,0) {$\bullet$} ;
	\draw (0.95,0) to node[left,pos=1/2]{$\omega_1$} (0.95,-1) ;
	\draw (0,-1) to (0.95,-1) to (1.95,-2) ;	
	\draw (1.05,0) to node[right,pos=1/2]{$\omega_2$} (1.05,-2) ;
	\draw (0,-2) to (1.05,-2) to (2,-2.95) ;

	\end{tikzpicture}
	\caption{}
	\label{fig-shustin4-b}
\end{subfigure}	
	\caption{}
	\label{fig-shustin4}
\end{figure}

The situation of figure \ref{fig-shustin4-a} where two edges with a unique common adjacent vertex are paired corresponds to the situation of figure \ref{fig-shustin4-b}. At the level of floor diagrams, there are two possible markings and so the contribution to the multiplicity is
\[ 2 \dfrac{[\omega_1][\omega_2][\omega_1+\omega_2]}{[2]}. \]
At the level of tropical curves, the contribution to the multiplicity is
\[ [\omega_1+\omega_2] \times  \dfrac{2}{[2]} \times [\omega_1] \times [\omega_2] = 2 \dfrac{[\omega_1][\omega_2][\omega_1+\omega_2]}{[2]} \]
so the contributions are the same.

These are the only possibilities appearing in a floor diagram. By \cite[lemma 5.6]{shustin-2024-refined} these are also the only terms that appear when computing $RB_q(\Delta,g,(y(\Delta)-1+g-2s,s))$. Hence the multiplicities match, and the counts are equal.
\end{proof}

\begin{rk}
Let $n = y(\Delta)-1+g$. By proposition \ref{prop-equal-shustin} and \cite[corollary 6.9]{shustin-2024-refined}, the integer $G_g(\Delta,s)(1)$ corresponds to the number of curves with Newton polygon $\Delta$, passing through $n-2s$ points on the toric surface $X_\Delta$ and with a fixed tangent direction at $s$ prescribed points. 
Here is an heuristic explanation of this fact, which has been communicated to us by Erwan Brugallé.

Take $s=1$. On the toric surface $X=X_\Delta$ we choose $n-2$ points and another point with a prescribed direction, as in figure \ref{fig-blowupXtan}. Blowing-up this point gives a $(-1)$-curve $E_1'$ with a point on it that corresponds to the direction we chose. If we blow-up again we obtain a $(-2)$-curve $\bar E_1$ and a $(-1)$-curve $\bar E_2$. The number of curves of genus $g$ and class $\Delta$ on $X$, through the $n-2$ points, and passing through the last point with the prescribed direction is then equal to the number of curves  of genus $g$ on $\bar X$, through $n-2$ points, and intersecting $\bar E_2$ but not $\bar E_1$, \ie of class $\Delta-\bar E_1 - 2\bar E_2$. We denote this number by $N_g(\bar X,\Delta- \bar E_1 - 2\bar E_2)$.

\begin{figure}[h!]
	\centering
\begin{tikzpicture}
	% X avec tangence

	\draw plot [smooth cycle] coordinates {(0,0) (2,-0.3) (4,0.5) (4,2) (1,2.5)} ;
	\node at (2,-1) {$X$} ;
	\node at (1.5,1.2) {$\bullet$} ;
	\draw[->] (1.5,1.2) to (2,1.7) ;
	
	% X avec tangence BU1
	
    \tikzset{shift={(5.5,0)}}

	\draw plot [smooth cycle] coordinates {(0,0) (2,-0.3) (4,0.5) (4,2) (1,2.5)} ;
	\node at (2,-1) {$X'$} ;
	\draw plot [smooth, tension=0.8] coordinates {(1,2) (2.5,1.2) (3,0.5)} ;
	\node at (2.5,1.2) {$\bullet$} ;
	\node at (1.5,2) {\scriptsize $-1$} ;
	\node at (3.3,0.7) {\scriptsize $E_1'$} ;
	
	% X avec tangence BU2
	
    \tikzset{shift={(5.5,0)}}

	\draw plot [smooth cycle] coordinates {(0,0) (2,-0.3) (4,0.5) (4,2) (1,2.5)} ;
	\node at (2,-1) {$\bar X$} ;
	
	\draw plot [smooth, tension=0.8] coordinates {(1,2) (2.5,1.2) (3,0.5)} ;
	\node at (1.5,2) {\scriptsize $-2$} ;
	\node at (3.3,0.7) {\scriptsize $\bar E_1$} ;
	
	\draw plot [smooth, tension=1] coordinates {(1.3,0.5) (2.5,1.2) (3.2,2)} ;
	\node at (1.3,0.7) {\scriptsize $-1$} ;
	\node at (3.4,2) {\scriptsize $\bar E_2$} ;
\end{tikzpicture}
	\caption{}
	\label{fig-blowupXtan}
\end{figure}

On $X$ one can also choose $n$ points ; we depict on figure \ref{fig-blowupXpoint} the two added points compared with the previous situation. Blowing-up these points, we obtain two $(-1)$-curves $E_1$ and $E_2$. The number of curves of genus $g$ and class $\Delta$ on $X$ through the $n$ points is then equal to the number of curves on $\tilde X$ through $n-2$ points and intersecting $E_1$ and $E_2$, \ie of class $\Delta-E_1-E_2$. We denote this number by $N_g(\tilde X,\Delta-E_1-E_2)$.

\begin{figure}[h!]
	\centering
\begin{tikzpicture}
\centering
	% X avec deux points

	\draw plot [smooth cycle] coordinates {(0,0) (2,-0.3) (4,0.5) (4,2) (1,2.5)} ;
	\node at (2,-1) {$X$} ;
	\node at (1.5,1) {$\bullet$} ;
	\node at (2.5,1.2) {$\bullet$} ;
	
	% X avec deux points BU
	
    \tikzset{shift={(8,0)}}

	\draw plot [smooth cycle] coordinates {(0,0) (2,-0.3) (4,0.5) (4,2) (1,2.5)} ;
	\node at (2,-1) {$\tilde X$} ;
	
	\draw plot [smooth, tension=0.8] coordinates {(0.7,1.5) (1.5,1) (2.2,0.3)} ;
	\node at (0.7,1.2) {\scriptsize $-1$} ;
	\node at (1.9,0.3) {\scriptsize $E_1$} ;
	
	\draw plot [smooth, tension=0.8] coordinates {(1,2) (2.5,1.2) (3,0.5)} ;
	\node at (1.5,2) {\scriptsize $-1$} ;
	\node at (3.3,0.7) {\scriptsize $E_2$} ;
\end{tikzpicture}
	\caption{}
	\label{fig-blowupXpoint}
\end{figure}

Under degeneration, $E_2$ corresponds to $\bar E_2$ and $E_1$ corresponds to $\bar E_1 + \bar E_2$. The Abramovich-Bertram formula \cite{abramovich-2001-formula, vakil-2000-counting, brugalle-2020-invariance, bousseau_refined_2021} states that
\[ N_g(\bar X, \Delta-\bar E_1 -2\bar E_2) = N_g(\tilde X, \Delta-E_1-E_2) -2 N_g(\tilde X,\Delta-2E_1) .\]

One can reason similarly for any $s$, and this shows that the numbers of curves with $s+1$ tangency conditions can be calculated from the numbers of curves with $s$ tangency conditions, and recursively from the numbers of curves without tangency condition. 
But these last numbers correspond to $G_g(\Delta,0)(1)$, and we know that the invariants $G_g(\Delta,s)$ satisfy the formula of proposition \ref{prop-welsch}.
In particular, their values at $q=1$ also satisfy this recursive formula and have the same initial (with respect to $s\in\N$) values as the number of curves with point and tangency conditions. Hence the evaluations at $q=1$ of the combinatorial Göttsche-Schroeter invariants recover some numbers of curves on toric surfaces.
\end{rk}

\section{Examples and conjectures}

\subsection{Some calculations} \label{subsec-ex-combi-GS-inv}

In this section we run the calculations on some examples. When possible, we use theorem \ref{theo-GSinv-poly-s} to compute $G_g(\Delta,s)$ for few values of $s$ before interpolating. Otherwise, we compute $G_g(\Delta,s)$ for $0\leq s \leq \smax(\Delta,g)$. However, in our examples we notice that $\< G_g(\Delta,s) \>_i$ is always given by a polynomial of degree $i$ in $s$, even when theorem \ref{theo-GSinv-poly-s} does not apply. 
We use some tables to present the computations. In a column corresponding to a floor diagram we indicate its contribution to $G_g(\Delta,s)$. We put a $\star$ when this contribution does not change passing from $s$ to $s+1$, to highlight which diagrams contribute to the decrease of $G_g(\Delta,s)$ with respect so $s$, see corollary \ref{coro-decrease-with-s}. Note that for $g=\gmax(\Delta)$ one always has $G_{\gmax(\Delta)}(\Delta,s)=1$. Also, because the refined invariants are symmetric we do not precise the coefficients of the negative exponents. In all this section we use the pairing $S = \{\{1,2\},\dots,\{2s-1,2s\}\}$. We essentially deal with some examples where the Newton polygon is $\Delta^n_{a,b}$ for some special values of $(n,a,b)$, see figure \ref{fig-trapeze-Hirz}.

\begin{figure}[h]
	\centering
\begin{tikzpicture}[scale=1] 
		\draw (0,0) node {$\bullet$} ;
		\draw (0,0) node[below] {\scriptsize $(0,0)$} ;
		\draw (5,0) node {$\bullet$} ;
		\draw (5,0) node[below] {\scriptsize $(an+b,0)$} ;
		\draw (1,2) node {$\bullet$} ; 
		\draw (1,2) node[right] {\scriptsize $(b,a)$} ;
		\draw (0,2) node {$\bullet$} ; 
		\draw (0,2) node[left] {\scriptsize $(0,a)$} ; 
		\draw (0,0) -- (5,0) -- (1,2) -- (0,2) -- cycle ;
	\end{tikzpicture}
    \caption{The trapezoid $\Delta^n_ {a,b}$.}
    \label{fig-trapeze-Hirz}
\end{figure}
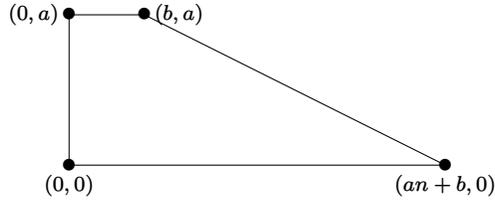

\begin{ex} \label{ex-square32}
We compute $G_g(\Delta^0_{3,2},s)$ for $0 \leq g \leq 2$. Tables \ref{table-square32g1} and \ref{table-square32g0} give
\begin{align*}
G_0(\Delta^0_{3,2},s) &= q^2 + (12-2s)q + (2s^2-22s+70) + \dots  \\
G_1(\Delta^0_{3,2},s) &= 2q + (16-2s) + \dots \\
G_2(\Delta^0_{3,2},s) &= 1 .
\end{align*}
\end{ex}

\begin{table}[h!]
\renewcommand{\arraystretch}{1.5}
\centering
\begin{tabular}{|c||c|c|c|c|c||c|}
\hline 
$s$ &
\centering
\begin{tikzpicture}[scale=3/4]
	\floor (1) at (0,1.5) {} ;
	\floor (2) at (0,3) {} ;
	\floor (3) at (0,4.5) {} ;
	
	\draw (1) to node[right,pos=1/2]{\scriptsize $2$} (2) ;
	
	\draw (2) to[out=120, in=-120] (3) ;
	\draw (2) to[out=60, in=-60] (3) ;
	
	\draw (1) to[out=-120, in = 90] (-0.4,0) ;
	\draw (1) to[out=-60, in = 90] (0.4,0) ;
	
	\draw (3) to[out=120, in = -90] (-0.4,6) ;
	\draw (3) to[out=60, in = -90] (0.4,6) ;
\end{tikzpicture}
	 & 
\centering
\begin{tikzpicture}[scale=3/4]	
	\floor (1) at (0,1.5) {} ;
	\floor (2) at (0,3) {} ;
	\floor (3) at (0,4.5) {} ;
	
	\draw (2) to node[right,pos=1/2]{\scriptsize $2$} (3) ;
	
	\draw (1) to[out=120, in=-120] (2) ;
	\draw (1) to[out=60, in=-60] (2) ;
	
	\draw (1) to[out=-120, in = 90] (-0.4,0) ;
	\draw (1) to[out=-60, in = 90] (0.4,0) ;
	
	\draw (3) to[out=120, in = -90] (-0.4,6) ;
	\draw (3) to[out=60, in = -90] (0.4,6) ;
\end{tikzpicture}
	 & 
\centering
\begin{tikzpicture}[scale=3/4]
	\floor (1) at (0,1.5) {} ;
	\floor (2) at (0.5,3) {} ;
	\floor (3) at (0,4.5) {} ;
	
	\draw (1) to (2) ;
	\draw (1) to[out=120, in=-120] (3) ;
	\draw (2) to (3) ;
	
	\draw (1) to[out=-120, in = 90] (-0.4,0) ;
	\draw (1) to[out=-60, in = 90] (0.4,0) ;
	
	\draw (3) to[out=120, in = -90] (-0.4,6) ;
	\draw (3) to[out=60, in = -90] (0.4,6) ;			
\end{tikzpicture}
	 & 
\centering
\begin{tikzpicture}[scale=3/4]	
	\floor (1) at (-0.5,1.5) {} ;
	\floor (2) at (0,3) {} ;
	\floor (3) at (0,4.5) {} ;
	
	\draw (1) to (2) ;
	
	\draw (2) to[out=120, in=-120] (3) ;
	\draw (2) to[out=60, in=-60] (3) ;
	
	\draw (1) to (-0.5,0) ;
	\draw (2) to[out=-70, in = 90] (0.5,0) ;
	
	\draw (3) to[out=120, in = -90] (-0.4,6) ;
	\draw (3) to[out=60, in = -90] (0.4,6) ;		
\end{tikzpicture}
	 & 
\centering
\begin{tikzpicture}[scale=3/4]	
	\floor (1) at (0,1.5) {} ;
	\floor (2) at (0,3) {} ;
	\floor (3) at (0.5,4.5) {} ;
	
	\draw (2) to (3) ;
	
	\draw (1) to[out=120, in=-120] (2) ;
	\draw (1) to[out=60, in=-60] (2) ;
	
	\draw (3) to (0.5,6) ;
	\draw (2) to[out=110, in = -90] (-0.5,6) ;
	
	\draw (1) to[out=-120, in = 90] (-0.4,0) ;
	\draw (1) to[out=-60, in = 90] (0.4,0) ;	
\end{tikzpicture} 
& $G_1(\Delta^0_{3,2},s)$ \\
\hline
\hline 
$0$ & $[2]^2$ & $[2]^2$ & $4$ & $4$ & $4$ & $2q + 16 + \dots$ \\
\hline 
$1$ & $\star$ & $\star$ & $\star$ & $2$ & $\star$ & $2q +14 + \dots$ \\
\hline 
% $2$ & $[2]_2$ & $\star$ & $\star$ & $\star$ & $\star$ & $2q + 12 + \dots$ \\
% \hline 
% $3$ & $\star$ & $\star$  & $2$ & $\star$ & $\star$ & $2q + 10+ \dots$ \\
%  \hline
\end{tabular}
\caption{Computation of $G_1(\Delta^0_{3,2},s)$.}
\label{table-square32g1}
\end{table}

\begin{landscape}
\vspace*{\fill}
\begin{table}[h!]
\renewcommand{\arraystretch}{1.5}
\centering
\begin{tabular}{|c||c|c|c|c|c|c|c|c||c|}
\hline 
$s$ &
\centering
\begin{tikzpicture}[scale=3/4]
	\floor (1) at (0,1.5) {} ;
	\floor (2) at (0,3) {} ;
    \floor (3) at (0,4.5) {} ;
	
	\draw (1) to node[left,pos=1/2]{\scriptsize $2$} (2) ;
	\draw (2) to node[left,pos=1/2]{\scriptsize $2$} (3) ;
	
	\draw (1) to[out=-120, in = 90] (-0.4,0) ;
	\draw (1) to[out=-60, in = 90] (0.4,0) ;
	
	\draw (3) to[out=120, in = -90] (-0.4,6) ;
	\draw (3) to[out=60, in = -90] (0.4,6) ;
\end{tikzpicture}
	 & 
\centering
\begin{tikzpicture}[scale=3/4]
	\floor (1) at (-0.5,1.5) {} ;
	\floor (2) at (0,3) {} ;
    \floor (3) at (0,4.5) {} ;
	
	\draw (1) to (2) ;
	\draw (2) to node[left,pos=1/2]{\scriptsize $2$} (3) ;
	
	\draw (1) to (-0.5,0) ;
	\draw (2) to[out=-70, in = 90] (0.5,0) ;
	
	\draw (3) to[out=120, in = -90] (-0.4,6) ;
	\draw (3) to[out=60, in = -90] (0.4,6) ;
\end{tikzpicture}
	 & 
\centering
\begin{tikzpicture}[scale=3/4]
	\floor (1) at (-0.5,1.5) {} ;
	\floor (2) at (-0.5,3) {} ;
    \floor (3) at (0,4.5) {} ;
	
	\draw (1) to (2) ;
	\draw (2) to (3) ;
	
	\draw (1) to (-0.5,0) ;
	\draw (3) to[out=-70, in = 90] (0.5,0) ;
	
	\draw (3) to[out=120, in = -90] (-0.4,6) ;
	\draw (3) to[out=60, in = -90] (0.4,6) ;		
\end{tikzpicture}
	 & 
\centering
\begin{tikzpicture}[scale=3/4]	
	\floor (1) at (0,1.5) {} ;
	\floor (2) at (0,3) {} ;
    \floor (3) at (-0.5,4.5) {} ;
	
	\draw (1) to node[left,pos=1/2]{\scriptsize $2$} (2) ;
	\draw (2) to (3) ;
	
	\draw (1) to[out=-120, in = 90] (-0.4,0) ;
	\draw (1) to[out=-60, in = 90] (0.4,0) ;
 
	\draw (2) to[out=70, in = -90] (0.5,6) ;
	
	\draw (3) to (-0.5,6) ;	
\end{tikzpicture} 
	 & 
\centering
\begin{tikzpicture}[scale=3/4]	
	\floor (1) at (0,1.5) {} ;
	\floor (2) at (-0.5,3) {} ;
    \floor (3) at (-0.5,4.5) {} ;
	
	\draw (1) to (2) ;
	\draw (2) to (3) ;
	
	\draw (1) to[out=-120, in = 90] (-0.4,0) ;
	\draw (1) to[out=-60, in = 90] (0.4,0) ;
 
	\draw (1) to[out=70, in = -90] (0.5,6) ;
	
	\draw (3) to (-0.5,6) ;	
\end{tikzpicture}
	 & 
\centering
\begin{tikzpicture}[scale=3/4]	
	\floor (1) at (-0.5,1.5) {} ;
	\floor (2) at (0,3) {} ;
    \floor (3) at (0.5,4.5) {} ;
	
	\draw (1) to (2) ;
	\draw (2) to (3) ;
 
	\draw (2) to[out=110, in = -90] (-0.5,6) ;
	\draw (2) to[out=-70, in = 90] (0.5,0) ;
	
	\draw (1) to (-0.5,0) ;	
	\draw (3) to (0.5,6) ;
\end{tikzpicture} 
	 & 
\centering
\begin{tikzpicture}[scale=3/4]	
	\floor (1) at (0,1.5) {} ;
	\floor (2) at (-1,3) {} ;
    \floor (3) at (1,3) {} ;
	
	\draw (1) to (2) ;
	\draw (1) to (3) ;
	
	\draw (1) to[out=-120, in = 90] (-0.4,0) ;
	\draw (1) to[out=-60, in = 90] (0.4,0) ;	
	
	\draw (2) to (-1,4.5) ;
	\draw (3) to (1,4.5) ;
\end{tikzpicture} 
	 & 
\centering
\begin{tikzpicture}[scale=3/4]	
	\floor (1) at (-1,1.5) {} ;
	\floor (2) at (1,1.5) {} ;
    \floor (3) at (0,3) {} ;
	
	\draw (3) to (2) ;
	\draw (3) to (1) ;
	
	\draw (1) to (-1,0) ;
	\draw (2) to (1,0) ;	
	
	\draw (3) to[out=120, in = 90] (-0.4,4.5) ;
	\draw (3) to[out=60, in = 90] (0.4,4.5) ;	
\end{tikzpicture} 
& $G_0(\Delta^0_{3,2},s)$ \\
\hline
\hline 
$0$ & $[2]^4$ & $4[2]^2$ & $6$ & $4[2]^2$ & $6$ & $16$ & $10$ & $10$ & $q^2 + 12q + 70 + \dots$ \\
\hline 
$1$ & $\star$ & $2[2]^2$ & $4$ & $\star$ & $\star$ & $8$ & $\star$ & $4$ & $q^2 + 10q + 50 + \dots$ \\
\hline 
$2$ & $[2]^2[2]_2$ & $\star$ & $2$ & $4[2]_2$  & $\star$ & $\star$ & $\star$ & $2$ & $q^2 + 8q + 34 + \dots$ \\
\hline 
$3$ & $([2]_2)^2$ & $2[2]_2$ & $\star$ & $\star$ & $4$ & $\star$ & $4$ & $\star$ & $q^2 + 6q + 22 + \dots$ \\
 \hline
$4$ & $\star$ & $\star$ & $\star$ & $2[2]_2$ & $2$ & $4$ & $2$ & $\star$ & $q^2 + 4q + 14 + \dots$ \\
 \hline
\end{tabular}
\caption{Computation of $G_0(\Delta^0_{3,2},s)$.}
\label{table-square32g0}
\end{table}
\vspace*{\fill}
\end{landscape}

\begin{ex} \label{ex-square23}
We compute $G_g(\Delta^0_{2,3},s)$ for $0\leq g \leq 2$. Tables \ref{table-square23g0} and \ref{table-square23g1} give
\begin{align*}
G_0(\Delta^0_{2,3},s) &= q^2 + (12-2s)q + (2s^2-22s+70) + \dots \\
G_1(\Delta^0_{2,3},s) &= 2q + (16-2s) + \dots \\
G_2(\Delta^0_{2,3},s) &= 1 .
\end{align*} 
\end{ex}

\begin{table}[h!]
\renewcommand{\arraystretch}{1.5}
\centering
\begin{tabular}{|c||c|c|c|c|c|c||c|}
\hline 
$s$ 
	 & 
\centering
\begin{tikzpicture}[scale=3/4]	
	\floor (1) at (0,1.5) {} ;
	\floor (2) at (0,3) {} ;
	
	\draw (1) to node[left,pos=1/2]{\scriptsize $3$} (2) ;
	
	\draw (1) to[out=-120, in = 90] (-0.4,0) ;
    \draw (1) to (0,0) ;
	\draw (1) to[out=-60, in = 90] (0.4,0) ;
	
	\draw (2) to[out=120, in = -90] (-0.4,4.5) ;
    \draw (2) to (0,4.5) ;
	\draw (2) to[out=60, in = -90] (0.4,4.5) ;
\end{tikzpicture}
	 & 
\centering
\begin{tikzpicture}[scale=3/4]
	\floor (1) at (0,1.5) {} ;
	\floor (2) at (0.5,3) {} ;
	
	\draw (1) to node[left,pos=1/2]{\scriptsize $2$} (2) ;
	
	\draw (1) to[out=-120, in = 90] (-0.4,0) ;
	\draw (1) to[out=-60, in = 90] (0.4,0) ;

    \draw (2) to[out=-60, in=90] (1.1,0) ;
	
	\draw (2) to[out=120, in = -90] (0.1,4.5) ;
    \draw (2) to (0.5,4.5) ;
	\draw (2) to[out=60, in = -90] (0.9,4.5) ;		
\end{tikzpicture}
	 & 
\centering
\begin{tikzpicture}[scale=3/4]	
	\floor (1) at (0,1.5) {} ;
	\floor (2) at (0.5,3) {} ;
	
	\draw (1) to (2) ;
	
	\draw (1) to (0,0) ;

    \draw (2) to[out=-70, in=90] (0.9,0) ;
    \draw (2) to[out=-50, in=90] (1.3,0) ;
	
	\draw (2) to[out=120, in = -90] (0.1,4.5) ;
    \draw (2) to (0.5,4.5) ;
	\draw (2) to[out=60, in = -90] (0.9,4.5) ;		
\end{tikzpicture} 
	 & 
\centering
\begin{tikzpicture}[scale=3/4]	
	\floor (1) at (0,1.5) {} ;
	\floor (2) at (0.5,3) {} ;
	
	\draw (1) to node[right,pos=1/2]{\scriptsize $2$} (2) ;
	
	\draw (1) to[out=-120, in = 90] (-0.4,0) ;
    \draw (1) to (0,0) ;
	\draw (1) to[out=-60, in = 90] (0.4,0) ;

    \draw (1) to[out=120, in=-90] (-0.6,4.5) ;
	
	\draw (2) to[out=120, in = -90] (0.1,4.5) ;
	\draw (2) to[out=60, in = -90] (0.9,4.5) ;		
\end{tikzpicture} 
	 & 
\centering
\begin{tikzpicture}[scale=3/4]	
	\floor (1) at (0,1.5) {} ;
	\floor (2) at (0.5,3) {} ;
	
	\draw (1) to (2) ;
	
	\draw (1) to[out=-120, in = 90] (-0.4,0) ;
    \draw (1) to (0,0) ;
	\draw (1) to[out=-60, in = 90] (0.4,0) ;

    \draw (1) to[out=130, in=-90] (-0.8,4.5) ;
    \draw (1) to[out=110, in=-90] (-0.4,4.5) ;
	
	\draw (2) to (0.5,4.5) ;
\end{tikzpicture}
	 & 
\centering
\begin{tikzpicture}[scale=3/4]	
	\floor (1) at (0,1.5) {} ;
	\floor (2) at (0.5,3) {} ;
	
	\draw (1) to (2) ;
	
	\draw (1) to[out=-120, in = 90] (-0.4,0) ;
	\draw (1) to[out=-60, in = 90] (0.4,0) ;

    \draw (1) to[out=120, in=-90] (-0.5,4.5) ;
    \draw (2) to[out=-60, in=90] (1,0) ;
	
	\draw (2) to[out=120, in = -90] (0.1,4.5) ;
	\draw (2) to[out=60, in = -90] (0.9,4.5) ;
\end{tikzpicture} 
& $G_0(\Delta^0_{2,3},s)$ \\
\hline
\hline 
$0$ & $[3]^2$ & $5[2]^2$ & $10$ & $5[2]^2$ & $10$ & $27$ & $q^2 + 12q + 70 + \dots$ \\
\hline 
$1$ & $\star$ & $3[2]^2$ & $4$ & $\star$ & $\star$ & $17$ & $q^2 + 10q + 50 + \dots$ \\
\hline 
$2$ & $\star$ & $[2]_2$ & $\star$  & $\star$ & $\star$ & $7$ & $q^2 +8q + 34 + \dots$ \\
\hline 
$3$ & $[3]_2$ & $\star$ & $\star$ & $2[3] + 3[2]_2$ & $\star$ & $5$ & $q^2 + 6q + 22 + \dots$ \\
\hline 
$4$ & $\star$ & $\star$ & $\star$ & $2[3] + [2]_2$ & $4$ & $3$ & $q^2 + 4q + 14 + \dots$ \\
\hline 
\end{tabular}
\caption{Computation of $G_0(\Delta^0_{2,3},s)$.}
\label{table-square23g0}
\end{table}
\begin{table}[h!]
\renewcommand{\arraystretch}{1.5}
\centering
\begin{tabular}{|c||c|c|c||c|}
\hline 
$s$ &
\centering
\begin{tikzpicture}[scale=3/4]
	\floor (1) at (0,1.5) {} ;
	\floor (2) at (0,3) {} ;
	
	\draw (1) to[out=60, in=-60] node[right,pos=1/2]{\scriptsize $2$} (2) ;
	\draw (1) to[out=120, in=-120] (2) ;
	
	\draw (2) to[out=120, in = -90] (-0.4,4.5) ;
    \draw (2) to (0,4.5) ;
	\draw (2) to[out=60, in = -90] (0.4,4.5) ;
	
	\draw (1) to[out=-120, in = 90] (-0.4,0) ;
	\draw (1) to (0,0) ;
	\draw (1) to[out=-60, in = 90] (0.4,0) ;
\end{tikzpicture}
	 & 
\centering
\begin{tikzpicture}[scale=3/4]	
	\floor (1) at (0,1.5) {} ;
	\floor (2) at (0.5,3) {} ;
	
	\draw (1) to[out=50, in=-90] (2) ;
	\draw (1) to[out=90, in=-130] (2) ;
	
	\draw (2) to[out=120, in=-90] (0.1,4.5) ;
    \draw (2) to[out=60, in=-90] (0.9,4.5) ;
	\draw (1) to[out=120, in=-90] (-0.6,4.5) ;
	
	\draw (1) to[out=-120, in = 90] (-0.4,0) ;
	\draw (1) to (0,0) ;
	\draw (1) to[out=-60, in = 90] (0.4,0) ;
\end{tikzpicture}
	 & 
\centering
\begin{tikzpicture}[scale=3/4]
	\floor (1) at (0,1.5) {} ;
	\floor (2) at (0.5,3) {} ;
	
	\draw (1) to[out=50, in=-90] (2) ;
	\draw (1) to[out=90, in=-130] (2) ;
	
	\draw (2) to[out=120, in=-90] (0.1,4.5) ;
    \draw (2) to (0.5,4.5) ;
	\draw (2) to[out=60, in=-90] (0.9,4.5) ;
	
	\draw (2) to[out=-60, in=90] (1.1,0) ;
	
	\draw (1) to[out=-120, in = 90] (-0.4,0) ;
	\draw (1) to[out=-60, in = 90] (0.4,0) ;		
\end{tikzpicture}
& $G_1(\Delta^0_{2,3},s)$ \\
\hline
\hline 
$0$ & $2[2]^2$ & $6$ & $6$ & $2q + 16 + \dots$ \\
\hline 
$1$ & $\star$ & $\star$ & $4$ & $2q + 14 + \dots$ \\
\hline 
% $2$ & $\star$ & $\star$ & $2$ & $2q + 12 + \dots$ \\
% \hline 
% $3$  & $2[3]$ & $\star$ & $\star$ & $2q + 10 + \dots$ \\
%  \hline
\end{tabular}
\caption{Computation of $G_1(\Delta^0_{2,3},s)$.}
\label{table-square23g1}
\end{table}

\begin{ex} \label{ex-delta122}
We compute $G_g(\Delta^1_{2,2},s)$ for $0\leq g\leq 1$. Tables \ref{table-delta122g0} and \ref{table-delta122g1} give
\begin{align*}
    G_0(\Delta^1_{2,2},s) &= q^2 + (12-2s)q + (2s^2-22s+70) + \dots  \\
    G_1(\Delta^1_{2,2},s) &= 2q + (16-2s) + \dots \\
    G_2(\Delta^1_{2,2},s) &= 1.
\end{align*} 
\end{ex}

\begin{table}[h!]
\renewcommand{\arraystretch}{1.5}
\centering
\begin{tabular}{|c||c|c|c|c|c|c||c|}
\hline 
$s$ 
	 & 
\centering
\begin{tikzpicture}[scale=3/4]	
	\floor (1) at (0,1.5) {} ;
	\floor (2) at (0,3) {} ;
	
	\draw (1) to node[left,pos=1/2]{\scriptsize $3$} (2) ;
	
	\draw (1) to[out=-140, in = 90] (-0.6,0) ;
	\draw (1) to[out=-110, in = 90] (-0.2,0) ;
	\draw (1) to[out=-70, in = 90] (0.2,0) ;
	\draw (1) to[out=-40, in = 90] (0.6,0) ;
	
	\draw (2) to[out=120, in = -90] (-0.4,4.5) ;
	\draw (2) to[out=60, in = -90] (0.4,4.5) ;
\end{tikzpicture}
	 & 
\centering
\begin{tikzpicture}[scale=3/4]
	\floor (1) at (0,1.5) {} ;
	\floor (2) at (0.5,3) {} ;
	
	\draw (1) to node[left,pos=1/2]{\scriptsize $2$} (2) ;
	
	\draw (1) to[out=-120, in = 90] (-0.4,0) ;
    \draw (1) to (0,0) ;
	\draw (1) to[out=-60, in = 90] (0.4,0) ;

    \draw (2) to[out=-60, in=90] (1.1,0) ;
	
	\draw (2) to[out=120, in = -90] (0.1,4.5) ;
	\draw (2) to[out=60, in = -90] (0.9,4.5) ;		
\end{tikzpicture}
	 & 
\centering
\begin{tikzpicture}[scale=3/4]	
	\floor (1) at (0,1.5) {} ;
	\floor (2) at (0.5,3) {} ;
	
	\draw (1) to (2) ;
	
	\draw (1) to[out=-120, in = 90] (-0.4,0) ;
	\draw (1) to[out=-60, in = 90] (0.4,0) ;

    \draw (2) to[out=-70, in=90] (0.9,0) ;
    \draw (2) to[out=-50, in=90] (1.3,0) ;
	
	\draw (2) to[out=120, in = -90] (0.1,4.5) ;
	\draw (2) to[out=60, in = -90] (0.9,4.5) ;		
\end{tikzpicture} 
	 & 
\centering
\begin{tikzpicture}[scale=3/4]	
	\floor (1) at (0,1.5) {} ;
	\floor (2) at (0.5,3) {} ;
	
	\draw (1) to node[right,pos=1/2]{\scriptsize $2$} (2) ;
	
	\draw (1) to[out=-140, in = 90] (-0.6,0) ;
	\draw (1) to[out=-110, in = 90] (-0.2,0) ;
	\draw (1) to[out=-70, in = 90] (0.2,0) ;
	\draw (1) to[out=-40, in = 90] (0.6,0) ;

    \draw (1) to[out=120, in=-90] (-0.6,4.5) ;
	
	\draw (2) to (0.5,4.5) ;		
\end{tikzpicture} 
	 & 
\centering
\begin{tikzpicture}[scale=3/4]	
	\floor (1) at (0,1.5) {} ;
	\floor (2) at (0.5,3) {} ;
	
	\draw (1) to (2) ;
	
	\draw (1) to[out=-140, in = 90] (-0.6,0) ;
	\draw (1) to[out=-110, in = 90] (-0.2,0) ;
	\draw (1) to[out=-70, in = 90] (0.2,0) ;
	\draw (1) to[out=-40, in = 90] (0.6,0) ;

    \draw (1) to[out=130, in=-90] (-0.8,4.5) ;
    \draw (1) to[out=110, in=-90] (-0.4,4.5) ;
\end{tikzpicture}
	 & 
\centering
\begin{tikzpicture}[scale=3/4]	
	\floor (1) at (0,1.5) {} ;
	\floor (2) at (0.5,3) {} ;
	
	\draw (1) to (2) ;
	
	\draw (1) to[out=-120, in = 90] (-0.4,0) ;
    \draw (1) to (0,0) ;
	\draw (1) to[out=-60, in = 90] (0.4,0) ;

    \draw (1) to[out=120, in=-90] (-0.6,4.5) ;
    \draw (2) to[out=-60, in=90] (1.1,0) ;
	
	\draw (2) to (0.5,4.5) ;
\end{tikzpicture} 
& $G_0(\Delta^1_{2,2},s)$ \\
\hline
\hline 
$0$ & $[3]^2$ & $6[2]^2$ & $15$ & $4[2]^2$ & $6$ & $26$ & $q^2 + 12q + 70 + \dots$ \\
\hline 
$1$ & $\star$ & $4[2]^2$ & $7$ & $\star$ & $\star$ & $18$ & $q^2 + 10q + 50 + \dots$ \\
\hline 
$2$ & $\star$ & $2[2]_2$ & $3$  & $\star$ & $\star$ & $10$ & $q^2 +8q + 34 + \dots$ \\
\hline 
%$3$ & $[3]_2$ & $2[3]$ & $\star$ & $[2]^2 + 3[2]_2$ & $\star$ & $8$ & $q^2 + 6q + 22 + \dots$ \\
%\hline 
\end{tabular}
\caption{Computation of $G_0(\Delta^1_{2,2},s)$.}
\label{table-delta122g0}
\end{table}
\begin{table}[h!]
\renewcommand{\arraystretch}{1.5}
\centering
\begin{tabular}{|c||c|c|c||c|}
\hline 
$s$ &
\centering
\begin{tikzpicture}[scale=3/4]
	\floor (1) at (0,1.5) {} ;
	\floor (2) at (0,3) {} ;
	
	\draw (1) to[out=60, in=-60] node[right,pos=1/2]{\scriptsize $2$} (2) ;
	\draw (1) to[out=120, in=-120] (2) ;
	
	\draw (2) to[out=120, in = -90] (-0.4,4.5) ;
	\draw (2) to[out=60, in = -90] (0.4,4.5) ;
	
	\draw (1) to[out=-140, in = 90] (-0.6,0) ;
	\draw (1) to[out=-110, in = 90] (-0.2,0) ;
	\draw (1) to[out=-70, in = 90] (0.2,0) ;
	\draw (1) to[out=-40, in = 90] (0.6,0) ;
\end{tikzpicture}
	 & 
\centering
\begin{tikzpicture}[scale=3/4]	
	\floor (1) at (0,1.5) {} ;
	\floor (2) at (0.5,3) {} ;
	
	\draw (1) to[out=50, in=-90] (2) ;
	\draw (1) to[out=90, in=-130] (2) ;
	
	\draw (2) to (0.5,4.5) ;
	\draw (1) to[out=120, in=-90] (-0.6,4.5) ;
	
	\draw (1) to[out=-140, in = 90] (-0.6,0) ;
	\draw (1) to[out=-110, in = 90] (-0.2,0) ;
	\draw (1) to[out=-70, in = 90] (0.2,0) ;
	\draw (1) to[out=-40, in = 90] (0.6,0) ;
\end{tikzpicture}
	 & 
\centering
\begin{tikzpicture}[scale=3/4]
	\floor (1) at (0,1.5) {} ;
	\floor (2) at (0.5,3) {} ;
	
	\draw (1) to[out=50, in=-90] (2) ;
	\draw (1) to[out=90, in=-130] (2) ;
	
	\draw (2) to[out=120, in=-90] (0.1,4.5) ;
	\draw (2) to[out=60, in=-90] (0.9,4.5) ;
	
	\draw (2) to[out=-60, in=90] (1.1,0) ;
	
	\draw (1) to[out=-120, in = 90] (-0.4,0) ;
	\draw (1) to (0,0) ;
	\draw (1) to[out=-60, in = 90] (0.4,0) ;		
\end{tikzpicture}
& $G_1(\Delta^1_{2,2},s)$ \\
\hline
\hline 
$0$ & $2[2]^2$ & $5$ & $7$ & $2q + 16 + \dots$ \\
\hline 
$1$ & $\star$ & $\star$ & $5$ & $2q + 14 + \dots$ \\
\hline 
% $2$ & $\star$ & $\star$ & $3$ & $2q + 12 + \dots$ \\
% \hline 
% $3$  & $[2]^2+[2]_2$ & $\star$ & $\star$ & $2q + 10 + \dots$ \\
%  \hline
\end{tabular}
\caption{Computation of $G_1(\Delta^1_{2,2},s)$.}
\label{table-delta122g1}
\end{table}

\begin{ex} \label{ex-nabla122} 
Let $\nabla^1_{2,2}$ be the polygon obtained by applying a $\frac{\pi}{2}$-rotation to $\Delta^1_{2,2}$. In tables \ref{table-nabla122g0} and \ref{table-nabla122g1} the number inscribed in a vertex is its divergence. We obtain
\begin{align*}
    G_0(\nabla^1_{2,2},s) &= q^2 + (12-2s)q + (2s^2-22s+70) + \dots \\
    G_1(\nabla^1_{2,2},s) &= 2q + (16-2s) + \dots \\
    G_2(\nabla^1_{2,2},s) &= 1. 
\end{align*}
\end{ex}

\begin{landscape}
\vspace*{\fill}
\begin{table}[h!]
\renewcommand{\arraystretch}{1.5}
\centering
\hspace*{-3cm}\begin{tabular}{|c||c|c|c|c|c|c|c|c|c|c||c|}
\hline 
$s$ 
	 & 
\centering
\begin{tikzpicture}[scale=3/4]	
	\floor (1) at (0,1.5) {\scriptsize $0$} ;
	\floor (2) at (0,3) {\scriptsize $0$} ;
	\floor (3) at (0,4.5) {\scriptsize $1$} ;
	\floor (4) at (0,6) {\scriptsize $1$} ;
	
	\draw (1) to node[left,pos=1/2] {\scriptsize $2$} (2) ;
	\draw (2) to node[left,pos=1/2]{\scriptsize $2$} (3) ;
	\draw (3) to (4) ;
	
	\draw (1) to[out=-120, in = 90] (-0.4,0) ;
	\draw (1) to[out=-60, in = 90] (0.4,0) ;
\end{tikzpicture}
	 & 
\centering
\begin{tikzpicture}[scale=3/4]
	\floor (1) at (-0.5,1.5) {\scriptsize $0$} ;
	\floor (2) at (0,3) {\scriptsize $0$} ;
	\floor (3) at (0,4.5) {\scriptsize $1$} ;
	\floor (4) at (0,6) {\scriptsize $1$} ;
	
	\draw (1) to (2) ;
	\draw (2) to node[left,pos=1/2]{\scriptsize $2$} (3) ;
	\draw (3) to (4) ;

    \draw (2) to [out=-60, in=90] (0.6,0) ;
	\draw (1) to (-0.5,0) ;			
\end{tikzpicture}
	 & 
\centering
\begin{tikzpicture}[scale=3/4]	
	\floor (1) at (-0.5,1.5) {\scriptsize $0$} ;
	\floor (2) at (-0.5,3) {\scriptsize $0$} ;
	\floor (3) at (0,4.5) {\scriptsize $1$} ;
	\floor (4) at (0,6) {\scriptsize $1$} ;
	
	\draw (1) to (2) ;
	\draw (2) to (3) ;
	\draw (3) to (4) ;

    \draw (3) to [out=-70, in=90] (0.6,0) ;
	\draw (1) to (-0.5,0) ;		
\end{tikzpicture}
%%%%%%%%%%%%%%%%%
	 & %%%%% 4 %%%%%%
\centering
\begin{tikzpicture}[scale=3/4]
	\floor (1) at (0,1.5) {\scriptsize $0$} ;
	\floor (2) at (0,3) {\scriptsize $1$} ;
	\floor (3) at (0,4.5) {\scriptsize $0$} ;
	\floor (4) at (0,6) {\scriptsize $1$} ;
	
	\draw (1) to node[left,pos=1/2] {\scriptsize $2$} (2) ;
	\draw (2) to (3) ;
	\draw (3) to (4) ;
	
	\draw (1) to[out=-120, in = 90] (-0.4,0) ;
	\draw (1) to[out=-60, in = 90] (0.4,0) ;
\end{tikzpicture} 
	 & 
\centering
\begin{tikzpicture}[scale=3/4]	
	\floor (1) at (-0.5,1.5) {\scriptsize $0$} ;
	\floor (2) at (0,3) {\scriptsize $1$} ;
	\floor (3) at (0,4.5) {\scriptsize $0$} ;
	\floor (4) at (0,6) {\scriptsize $1$} ;
	
	\draw (1) to (2) ;
	\draw (2) to (3) ;
	\draw (3) to (4) ;

    \draw (2) to [out=-60, in=90] (0.6,0) ;
	\draw (1) to (-0.5,0) ;	
\end{tikzpicture} 
%%%%%%%%%%%%%%%%%%%
	 & %%%%%% 6 %%%%%%%
\centering
\begin{tikzpicture}[scale=3/4]	
	\floor (1) at (0,1.5) {\scriptsize $1$} ;
	\floor (2) at (0,3) {\scriptsize $0$} ;
	\floor (3) at (0,4.5) {\scriptsize $0$} ;
	\floor (4) at (0,6) {\scriptsize $1$} ;
	
	\draw (1) to (2) ;
	\draw (2) to (3) ;
	\draw (3) to (4) ;
	
	\draw (1) to[out=-120, in = 90] (-0.4,0) ;
	\draw (1) to[out=-60, in = 90] (0.4,0) ;
\end{tikzpicture}
%%%%%%%%%%%%%%%%%%%
	 & %%%%%% 7 %%%%%%%
\centering
\begin{tikzpicture}[scale=3/4]	
	\floor (1) at (0,1.5) {\scriptsize $0$} ;
	\floor (2) at (0,3) {\scriptsize $0$} ;
	\floor (3) at (-1,4.5) {\scriptsize $1$} ;
	\floor (4) at (1,4.5) {\scriptsize $1$} ;
	
	\draw (1) to node[left,pos=1/2] {\scriptsize $2$} (2) ;
	\draw (2) to (3) ;
	\draw (2) to (4) ;
	
	\draw (1) to[out=-120, in = 90] (-0.4,0) ;
	\draw (1) to[out=-60, in = 90] (0.4,0) ;
\end{tikzpicture} 
 &
\centering
\begin{tikzpicture}[scale=3/4]	
	\floor (1) at (-0.5,1.5) {\scriptsize $0$} ;
	\floor (2) at (0,3) {\scriptsize $0$} ;
	\floor (3) at (-1,4.5) {\scriptsize $1$} ;
	\floor (4) at (1,4.5) {\scriptsize $1$} ;
	
	\draw (1) to (2) ;
	\draw (2) to (3) ;
	\draw (2) to (4) ;

    \draw (2) to[out=-60, in=90] (0.6,0) ;
	
	\draw (1) to (-0.5,0) ;
\end{tikzpicture} 
 &
\centering
\begin{tikzpicture}[scale=3/4]	
	\floor (1) at (0,1.5) {\scriptsize $0$} ;
	\floor (2) at (-1,3) {\scriptsize $0$} ;
	\floor (3) at (1,3) {\scriptsize $1$} ;
	\floor (4) at (-1,4.5) {\scriptsize $1$} ;
	
	\draw (1) to (2) ;
	\draw (1) to (3) ;
	\draw (2) to (4) ;
	
	\draw (1) to[out=-120, in = 90] (-0.4,0) ;
	\draw (1) to[out=-60, in = 90] (0.4,0) ;
\end{tikzpicture}
 &
\centering
\begin{tikzpicture}[scale=3/4]	
	\floor (1) at (-1,1.5) {\scriptsize $0$} ;
	\floor (2) at (1,1.5) {\scriptsize $0$} ;
	\floor (3) at (0,3) {\scriptsize $1$} ;
	\floor (4) at (0,4.5) {\scriptsize $1$} ;
	
	\draw (1) to (3) ;
	\draw (2) to (3) ;
	\draw (3) to (4) ;
	
	\draw (1) to (-1,0) ;
	\draw (2) to (1,0) ;
\end{tikzpicture}
& $G_0(\nabla^1_{2,2},s)$ \\
\hline
\hline 
$0$ & $[2]^4$ & $4[2]^2$ & $6$ & $[2]^2$ & $4$ & $1$ & $3[2]^2$ & $12$ & $15$ & $10$ & $q^2+12q+70$ \\
\hline 
$1$ & $\star$ & $2[2]^2$ & $4$ & $\star$ & $2$ & $\star$ & $\star$ & $6$ & $\star$ & $4$ & $q^2+10q+50$ \\
\hline 
$2$ & $[2]^2[2]_2$ & $\star$ & $2$  & $[2]_2$ & $\star$ & $\star$ & $3[2]_2$ & $\star$ & $\star$ & $2$ & $q^2+8q+34$ \\
\hline 
$3$ & $([2]_2)^2$ & $2[2]_2$ & $\star$ & $\star$ & $\star$ & $\star$ & $\star$ & $\star$ & $7$ & $\star$ & $q^2+6q+22$ \\
\hline
$4$ & $ \star$ & $\star$ & $\star$ & $\star$ & $\star$ & $\star$ & $[2]_2$ & $2$ & $3$ & $\star$ & $q^2+4q+14$ \\
\hline 
\end{tabular}
\caption{Computation of $G_0(\nabla^1_{2,2},s)$.}
\label{table-nabla122g0}
\end{table}
\vspace*{\fill}
\end{landscape}

\begin{table}[h!]
\renewcommand{\arraystretch}{1.5}
\centering
\begin{tabular}{|c||c|c|c|c|c|c||c|}
\hline 
$s$ 
	 & 
\centering
\begin{tikzpicture}[scale=3/4]	
	\floor (1) at (0,1.5) {\scriptsize $0$} ;
	\floor (2) at (0,3) {\scriptsize $0$} ;
	\floor (3) at (0,4.5) {\scriptsize $1$} ;
	\floor (4) at (0,6) {\scriptsize $1$} ;
	
	\draw (1) to[out=120, in=-120] (2) ;
	\draw (1) to[out=60, in=-60] (2) ;
	\draw (2) to node[left,pos=1/2]{\scriptsize $2$} (3) ;
	\draw (3) to (4) ;
	
	\draw (1) to[out=-120, in = 90] (-0.4,0) ;
	\draw (1) to[out=-60, in = 90] (0.4,0) ;
\end{tikzpicture}
	 & 
\centering
\begin{tikzpicture}[scale=3/4]
	\floor (1) at (0,1.5) {\scriptsize $0$} ;
	\floor (2) at (0,3) {\scriptsize $0$} ;
	\floor (3) at (0,4.5) {\scriptsize $1$} ;
	\floor (4) at (0,6) {\scriptsize $1$} ;
	
	\draw (2) to[out=120, in=-120] (3) ;
	\draw (2) to[out=60, in=-60] (3) ;
	\draw (1) to node[left,pos=1/2]{\scriptsize $2$} (2) ;
	\draw (3) to (4) ;
	
	\draw (1) to[out=-120, in = 90] (-0.4,0) ;
	\draw (1) to[out=-60, in = 90] (0.4,0) ;			
\end{tikzpicture}
	 & 
\centering
\begin{tikzpicture}[scale=3/4]	
	\floor (1) at (0,1.5) {\scriptsize $0$} ;
	\floor (2) at (-0.5,3) {\scriptsize $0$} ;
	\floor (3) at (0,4.5) {\scriptsize $1$} ;
	\floor (4) at (0,6) {\scriptsize $1$} ;
	
	\draw (1) to (2) ;
	\draw (1) to[out=60, in=-60] (3) ;
	\draw (2) to (3) ;
	\draw (3) to (4) ;
	
	\draw (1) to[out=-120, in = 90] (-0.4,0) ;
	\draw (1) to[out=-60, in = 90] (0.4,0) ;
\end{tikzpicture} 
	 & 
\centering
\begin{tikzpicture}[scale=3/4]	
	\floor (1) at (-0.5,1.5) {\scriptsize $0$} ;
	\floor (2) at (0,3) {\scriptsize $0$} ;
	\floor (3) at (0,4.5) {\scriptsize $1$} ;
	\floor (4) at (0,6) {\scriptsize $1$} ;
	
	\draw (1) to (2) ;
	\draw (2) to[out=120, in=-120] (3) ;
	\draw (2) to[out=60, in=-60] (3) ;
	\draw (3) to (4) ;
	
	\draw (1) to (-0.5,0) ;
	\draw (2) to[out=-70, in = 90] (0.5,0) ;
\end{tikzpicture} 
	 & 
\centering
\begin{tikzpicture}[scale=3/4]	
	\floor (1) at (0,1.5) {\scriptsize $0$} ;
	\floor (2) at (0,3) {\scriptsize $1$} ;
	\floor (3) at (0,4.5) {\scriptsize $0$} ;
	\floor (4) at (0,6) {\scriptsize $1$} ;
	
	\draw (1) to[out=120, in=-120] (2) ;
	\draw (1) to[out=60, in=-60] (2) ;
	\draw (2) to (3) ;
	\draw (3) to (4) ;
	
	\draw (1) to[out=-120, in = 90] (-0.4,0) ;
	\draw (1) to[out=-60, in = 90] (0.4,0) ;	
\end{tikzpicture}
	 & 
\centering
\begin{tikzpicture}[scale=3/4]	
	\floor (1) at (0,1.5) {\scriptsize $0$} ;
	\floor (2) at (0,3) {\scriptsize $0$} ;
	\floor (3) at (-1,4.5) {\scriptsize $1$} ;
	\floor (4) at (1,4.5) {\scriptsize $1$} ;
	
	\draw (1) to[out=120, in=-120] (2) ;
	\draw (1) to[out=60, in=-60] (2) ;
	\draw (2) to (3) ;
	\draw (2) to (4) ;
	
	\draw (1) to[out=-120, in = 90] (-0.4,0) ;
	\draw (1) to[out=-60, in = 90] (0.4,0) ;
\end{tikzpicture} 
& $G_1(\nabla^1_{2,2},s)$ \\
\hline
\hline 
$0$ & $2[2]^2$ & $[2]^2$ & $4$ & $4$ & $1$ & $3$ & $2q + 16 + \dots$ \\
\hline 
$1$ & $\star$ & $\star$ & $\star$ & $2$ & $\star$ & $\star$ & $2q + 14 + \dots$ \\
\hline 
% $2$ & $\star$ & $[2]_2$ & $\star$  & $\star$ & $\star$ & $\star$ & $2q + 12 + \dots$ \\
% \hline 
% $3$ & $\star$ & $\star$ & $2$  & $\star$ & $\star$ & $\star$ & $2q + 10 + \dots$ \\
% \hline 
\end{tabular}
\caption{Computation of $G_1(\nabla^1_{2,2},s)$.}
\label{table-nabla122g1}
\end{table}

\begin{ex} \label{ex-square22}
We compute $G_g(\Delta^0_{2,2},s)$ for $0\leq g \leq 1$. Table \ref{table-square22g0} gives
\begin{align*}
    G_0(\Delta^0_{2,2},s) &= q + (10-2s) + \dots \\
    G_1(\Delta^0_{2,2},s) &= 1. 
\end{align*} 
\end{ex}

\begin{table}[h!]
\renewcommand{\arraystretch}{1.5}
%\parbox{.45\linewidth}{\centering
\centering
\begin{tabular}{|c||c|c|c||c|}
\hline 
$s$ &
\centering
\begin{tikzpicture}[scale=3/4]
	\floor (1) at (0,1.5) {} ;
	\floor (2) at (0,3) {} ;
	
	\draw (1) to node[left,pos=1/2]{\scriptsize $2$} (2) ;
	
	\draw (2) to[out=120, in = -90] (-0.4,4.5) ;
	\draw (2) to[out=60, in = -90] (0.4,4.5) ;
	
	\draw (1) to[out=-120, in = 90] (-0.4,0) ;
	\draw (1) to[out=-60, in = 90] (0.4,0) ;
\end{tikzpicture}
	 & 
\centering
\begin{tikzpicture}[scale=3/4]	
	\floor (1) at (0,1.5) {} ;
	\floor (2) at (0.5,3) {} ;
	
	\draw (1) to (2) ;
	
	\draw (2) to[out=120, in=-90] (0.1,4.5) ;
	\draw (2) to[out=60, in=-90] (0.9,4.5) ;
	
	\draw (1) to (0,0) ;
    \draw (2) to[out=-60, in=90] (1.1,0) ;
\end{tikzpicture}
	 & 
\centering
\begin{tikzpicture}[scale=3/4]
	\floor (1) at (0,1.5) {} ;
	\floor (2) at (0.5,3) {} ;
	
	\draw (1) to (2) ;
	
	\draw (2) to (0.5,4.5) ;
	\draw (1) to[out=120, in=-90] (-0.6,4.5) ;
	
	\draw (1) to[out=-120, in = 90] (-0.4,0) ;
	\draw (1) to[out=-60, in = 90] (0.4,0) ;
\end{tikzpicture}
& $G_0(\Delta^0_{2,2},s)$ \\
\hline
\hline 
$0$ & $[2]^2$ & $4$ & $4$ & $q + 10 + \dots$ \\
\hline 
$1$ & $\star$ & $2$ & $\star$ & $q + 8 + \dots$ \\
\hline 
% $2$ & $[2]_2$ & $\star$ & $\star$ & $q + 6 + \dots$ \\
% \hline 
% $3$  & $\star$ & $\star$ & $2$ & $q + 4 + \dots$ \\
%  \hline
\end{tabular}
\caption{Computation of $G_0(\Delta^0_{2,2},s)$.}
\label{table-square22g0}
%} %%%%%%%%%%%%%%%%%
%\hfill
%\parbox{.45\linewidth}{\centering
%\begin{tabular}{|c||c|c||c|}
%\hline 
%$s$ &
%\centering
%\begin{tikzpicture}[scale=3/4]
%	\floor (1) at (0,1.5) {} ;
%	\floor (2) at (0,3) {} ;
%	
%	\draw (1) to node[left,pos=1/2]{\scriptsize $2$} (2) ;
%	
%	\draw (1) to[out=-140, in = 90] (-0.6,0) ;
%	\draw (1) to[out=-110, in = 90] (-0.2,0) ;
%	\draw (1) to[out=-70, in = 90] (0.2,0) ;
%	\draw (1) to[out=-40, in = 90] (0.6,0) ;
%\end{tikzpicture}
%	 & 
%\centering
%\begin{tikzpicture}[scale=3/4]	
%	\floor (1) at (0,1.5) {} ;
%	\floor (2) at (0.5,3) {} ;
%	
%	\draw (1) to (2) ;
%		
%	\draw (1) to[out=-120, in = 90] (-0.4,0) ;
%	\draw (1) to (0,0) ;
%	\draw (1) to[out=-60, in = 90] (0.4,0) ;	
%    \draw (2) to[out=-60, in=90] (1.1,0) ;
%\end{tikzpicture}
%& $G_0(\Delta^2_{2,0},s)$ \\
%\hline
%\hline 
%$0$ & $[2]^2$ & $6$ & $q + 8 + \dots$ \\
%\hline 
%$1$ & $\star$ & $4$ & $q + 6 + \dots$ \\
%\hline 
%% $2$ & $\star$ & $2$ & $q + 4 + \dots$ \\
%% \hline 
%% $3$  & $[2]_2$ & $\star$ & $q + 2 + \dots$ \\
%%  \hline
%\end{tabular}
%\caption{Computation of $G_0(\Delta^2_{2,0},s)$}
%\label{table-delta220g0}
%}
\end{table}

\begin{ex} \label{ex-delta220}
We compute $G_g(\Delta^2_{2,0},s)$ for $0\leq g \leq 1$. Table \ref{table-delta220g0} gives
\begin{align*}
    G_0(\Delta^2_{2,0},s) &= q + (8-2s) + \dots \\
    G_1(\Delta^2_{2,0},s) &= 1. 
\end{align*} 
\end{ex}

\begin{table}[h!]
\renewcommand{\arraystretch}{1.5}
\centering
\begin{tabular}{|c||c|c||c|}
\hline 
$s$ &
\centering
\begin{tikzpicture}[scale=3/4]
	\floor (1) at (0,1.5) {} ;
	\floor (2) at (0,3) {} ;
	
	\draw (1) to node[left,pos=1/2]{\scriptsize $2$} (2) ;
	
	\draw (1) to[out=-140, in = 90] (-0.6,0) ;
	\draw (1) to[out=-110, in = 90] (-0.2,0) ;
	\draw (1) to[out=-70, in = 90] (0.2,0) ;
	\draw (1) to[out=-40, in = 90] (0.6,0) ;
\end{tikzpicture}
	 & 
\centering
\begin{tikzpicture}[scale=3/4]	
	\floor (1) at (0,1.5) {} ;
	\floor (2) at (0.5,3) {} ;
	
	\draw (1) to (2) ;
		
	\draw (1) to[out=-120, in = 90] (-0.4,0) ;
	\draw (1) to (0,0) ;
	\draw (1) to[out=-60, in = 90] (0.4,0) ;	
    \draw (2) to[out=-60, in=90] (1.1,0) ;
\end{tikzpicture}
& $G_0(\Delta^2_{2,0},s)$ \\
\hline
\hline 
$0$ & $[2]^2$ & $6$ & $q + 8 + \dots$ \\
\hline 
$1$ & $\star$ & $4$ & $q + 6 + \dots$ \\
\hline 
$2$ & $\star$ & $2$ & $q + 4 + \dots$ \\
\hline 
$3$  & $[2]_2$ & $\star$ & $q + 2 + \dots$ \\
 \hline
\end{tabular}
\caption{Computation of $G_0(\Delta^2_{2,0},s)$.}
\label{table-delta220g0}
\end{table}

\begin{ex} \label{ex-delta212}
We compute $G_g(\Delta^2_{1,2},s)$ for $0\leq g \leq 1$. For $g=0=\gmax(\Delta^2_{1,2})$ there is a unique marked floor diagram and it has multiplicity $1$. There is no diagram for $g=1$, hence
\begin{align*}
    G_0(\Delta^2_{1,2},s) &= 1, \\
    G_1(\Delta^2_{1,2},s) &= 0. 
\end{align*} 
\end{ex}

\begin{ex} \label{ex-delta221}
We compute $G_g(\Delta^2_{2,1},s)$ for $0\leq g \leq 2$. Tables \ref{table-delta221g0} and \ref{table-delta221g1} give
\begin{align*}
    G_0(\Delta^2_{2,1},s) &=  q^2 + (12-2s)q + (2s^2-22s+67) + \dots \\
    G_1(\Delta^2_{2,1},s) &= 2q + (16-2s) + \dots\\
    G_2(\Delta^2_{2,1},s) &= 1. 
\end{align*} 
\end{ex}

\begin{table}[h!]
\renewcommand{\arraystretch}{1.5}
\centering
\begin{tabular}{|c||c|c|c|c|c||c|}
\hline 
$s$ &
\centering
\begin{tikzpicture}[scale=3/4]
	\floor (1) at (0,1.5) {} ;
	\floor (2) at (0,3) {} ;
	
	\draw (1) to node[left,pos=1/2]{\scriptsize $3$} (2) ;
	
	\draw (2) to (0,4.5) ;
	
	\draw (1) to[out=-120, in = 90] (-0.4,0) ;
    \node at (0,0.5) {$\dots$} ;
	\node at (0,-0.4) {\large $\underbrace{ }_{5}$} ;
	\draw (1) to[out=-60, in = 90] (0.4,0) ;	
\end{tikzpicture}
	 & 
\centering
\begin{tikzpicture}[scale=3/4]	
	\floor (1) at (0,1.5) {} ;
	\floor (2) at (0.5,3) {} ;
	
	\draw (1) to node[right,pos=1/2]{\scriptsize $2$} (2) ;
	
	\draw (1) to[out=110, in=-90] (-0.5,4.5) ;
	
	\draw (1) to[out=-120, in = 90] (-0.4,0) ;
    \node at (0,0.5) {$\dots$} ;
	\node at (0,-0.4) {\large $\underbrace{ }_{5}$} ;
	\draw (1) to[out=-60, in = 90] (0.4,0) ;
\end{tikzpicture}
	 & 
\centering
\begin{tikzpicture}[scale=3/4]
	\floor (1) at (0,1.5) {} ;
	\floor (2) at (0.5,3) {} ;
	
	\draw (1) to node[left,pos=1/2] {\scriptsize $2$} (2) ;
	\draw (2) to (0.5,4.5) ;
	
	\draw (1) to[out=-140, in = 90] (-0.6,0) ;
	\draw (1) to[out=-110, in = 90] (-0.2,0) ;
	\draw (1) to[out=-70, in = 90] (0.2,0) ;
	\draw (1) to[out=-40, in = 90] (0.6,0) ;

    \draw (2) to[out=-60, in=90] (1.1,0) ;
\end{tikzpicture}
	 & 
\centering
\begin{tikzpicture}[scale=3/4]	
	\floor (1) at (0,1.5) {} ;
	\floor (2) at (0.5,3) {} ;
	
	\draw (1) to (2) ;
    \draw (2) to (0.5,4.5) ;
		
	\draw (1) to[out=-120, in = 90] (-0.4,0) ;
	\draw (1) to (0,0) ;
	\draw (1) to[out=-60, in = 90] (0.4,0) ;	
 
	\draw (2) to[out=-70, in = 90] (1,0) ;
	\draw (2) to[out=-50, in = 90] (1.4,0) ;		
\end{tikzpicture}
	 & 
\centering
\begin{tikzpicture}[scale=3/4]	
	\floor (1) at (0,1.5) {} ;
	\floor (2) at (0.5,3) {} ;
		
	\draw (1) to (2) ;
	\draw (1) to[out=110, in=-90] (-0.5,4.5) ;

    \draw (2) to[out=-60, in=90] (1.1,0) ;
	
	\draw (1) to[out=-140, in = 90] (-0.6,0) ;
	\draw (1) to[out=-110, in = 90] (-0.2,0) ;
	\draw (1) to[out=-70, in = 90] (0.2,0) ;
	\draw (1) to[out=-40, in = 90] (0.6,0) ;
\end{tikzpicture} 
& $G_0(\Delta^2_{2,1},s)$ \\
\hline
\hline 
$0$ & $[3]^2$ & $3[2]^2$ & $7[2]^2$ & $21$ & $23$ & $q^2 + 12q + 67 + \dots$ \\
\hline 
$1$ & $\star$ & $\star$ & $5[2]^2$ & $11$ & $17$ & $q^2 + 10q + 47 + \dots$ \\
\hline 
$2$ & $\star$ & $\star$ & $3[2]^2$ & $5$ & $11$ & $q^2 + 8q + 31 + \dots$ \\
\hline 
% $3$ & $\star$ & $\star$  & $[2]_2$ & $\star$ & $5$ & $q^2 + 6q + 19 + \dots$ \\
%  \hline
\end{tabular}
\caption{Computation of $G_0(\Delta^2_{2,1},s)$.}
\label{table-delta221g0}
\end{table}
\begin{table}[h!]
\renewcommand{\arraystretch}{1.5}
\centering
\begin{tabular}{|c||c|c|c||c|}
\hline 
$s$ &
\centering
\begin{tikzpicture}[scale=3/4]
	\floor (1) at (0,1.5) {} ;
	\floor (2) at (0,3) {} ;
	
	\draw (1) to[out=60, in=-60] (2) ;
	\draw (1) to[out=120, in=-120] node[left,pos=1/2]{\scriptsize $2$} (2) ;
	
    \draw (2) to (0,4.5) ;
	
	\draw (1) to[out=-120, in = 90] (-0.4,0) ;
    \node at (0,0.5) {$\dots$} ;
	\node at (0,-0.4) {\large $\underbrace{ }_{5}$} ;
	\draw (1) to[out=-60, in = 90] (0.4,0) ;
\end{tikzpicture}
	 & 
\centering
\begin{tikzpicture}[scale=3/4]	
	\floor (1) at (0,1.5) {} ;
	\floor (2) at (0.5,3) {} ;
	
	\draw (1) to[out=50, in=-90] (2) ;
	\draw (1) to[out=90, in=-130] (2) ;
	
	\draw (1) to[out=120, in=-90] (-0.6,4.5) ;
	
	\draw (1) to[out=-120, in = 90] (-0.4,0) ;
    \node at (0,0.5) {$\dots$} ;
	\node at (0,-0.4) {\large $\underbrace{ }_{5}$} ;
	\draw (1) to[out=-60, in = 90] (0.4,0) ;
\end{tikzpicture}
	 & 
\centering
\begin{tikzpicture}[scale=3/4]
	\floor (1) at (0,1.5) {} ;
	\floor (2) at (0.5,3) {} ;
	
	\draw (1) to[out=50, in=-90] (2) ;
	\draw (1) to[out=90, in=-130] (2) ;
	
    \draw (2) to (0.5,4.5) ;
	
	\draw (2) to[out=-60, in=90] (1.1,0) ;
	
	\draw (1) to[out=-140, in = 90] (-0.6,0) ;
	\draw (1) to[out=-110, in = 90] (-0.2,0) ;
	\draw (1) to[out=-70, in = 90] (0.2,0) ;
	\draw (1) to[out=-40, in = 90] (0.6,0) ;	
\end{tikzpicture}
& $G_1(\Delta^2_{2,1},s)$ \\
\hline
\hline 
$0$ & $2[2]^2$ & $4$ & $8$ & $2q + 16 + \dots$ \\
\hline 
$1$ & $\star$ & $\star$ & $6$ & $2q + 14 + \dots$ \\
\hline 
% $2$ & $\star$ & $\star$ & $4$ & $2q + 12 + \dots$ \\
% \hline 
% $3$  & $\star$ & $\star$ & $2$ & $2q + 10 + \dots$ \\
%  \hline
\end{tabular}
\caption{Computation of $G_1(\Delta^2_{2,1},s)$.}
\label{table-delta221g1}
\end{table}

%\newpage

\begin{ex} \label{ex-delta213}
We compute $G_g(\Delta^2_{1,3},s)$ for $0\leq g \leq 1$. For $g=0=\gmax(\Delta^2_{1,3})$ there is a unique marked floor diagram and it has multiplicity $1$. There is no diagram for $g=1$, hence
\begin{align*}
    G_0(\Delta^2_{1,3},s) &= 1, \\
    G_1(\Delta^2_{1,3},s) &= 0. 
\end{align*} 
\end{ex}

\begin{ex} \label{ex-square33}
We compute $G_3(\Delta^0_{3,3},s)$. %\gnote{pas sûr de faire le truc pour $g=2$, ya beaucoup de diagrammes...}
Table \ref{table-square33g3} gives
\[ G_3(\Delta^0_{3,3},s) = 4q + (26-2s) + \dots  \]
\end{ex}

\begin{table}[h!]
\renewcommand{\arraystretch}{1.5}
\centering
\begin{tabular}{|c||c|c|c|c|c||c|}
\hline 
$s$ &
\centering
\begin{tikzpicture}[scale=3/4]
	\floor (1) at (0,1.5) {} ;
	\floor (2) at (0,3) {} ;
	\floor (3) at (0,4.5) {} ;
	
	\draw (1) to[out=120, in=-120] node[left,pos=1/2]{\scriptsize $2$} (2) ;
    \draw (1) to[out=60, in=-60] (2) ;
	
	\draw (2) to[out=120, in=-120] (3) ;
    \draw (2) to (3) ;
	\draw (2) to[out=60, in=-60] (3) ;
	
	\draw (1) to[out=-120, in = 90] (-0.4,0) ;
    \draw (1) to (0,0) ;
	\draw (1) to[out=-60, in = 90] (0.4,0) ;
	
	\draw (3) to[out=120, in = -90] (-0.4,6) ;
    \draw (3) to (0,6) ;
	\draw (3) to[out=60, in = -90] (0.4,6) ;
\end{tikzpicture}
	 & 
\centering
\begin{tikzpicture}[scale=3/4]	
	\floor (1) at (0,1.5) {} ;
	\floor (2) at (0,3) {} ;
	\floor (3) at (0,4.5) {} ;
	
	\draw (1) to[out=120, in=-120] (2) ;
    \draw (1) to (2) ;
    \draw (1) to[out=60, in=-60] (2) ;
	
	\draw (2) to[out=120, in=-120] node[left,pos=1/2]{\scriptsize $2$} (3) ;
	\draw (2) to[out=60, in=-60] (3) ;
	
	\draw (1) to[out=-120, in = 90] (-0.4,0) ;
    \draw (1) to (0,0) ;
	\draw (1) to[out=-60, in = 90] (0.4,0) ;
	
	\draw (3) to[out=120, in = -90] (-0.4,6) ;
    \draw (3) to (0,6) ;
	\draw (3) to[out=60, in = -90] (0.4,6) ;
\end{tikzpicture}
	 & 
\centering
\begin{tikzpicture}[scale=3/4]	
	\floor (1) at (-0.5,1.5) {} ;
	\floor (2) at (0,3) {} ;
	\floor (3) at (0,4.5) {} ;
	
	\draw (1) to[out=50, in=-90] (2) ;
	\draw (1) to[out=90, in=-130] (2) ;
	
	\draw (2) to[out=120, in=-120] (3) ;
    \draw (2) to (3) ;
	\draw (2) to[out=60, in=-60] (3) ;
 
	\draw (2) to[out=-60, in = 90] (0.7,0) ;

	\draw (1) to[out=-120, in = 90] (-0.9,0) ;
	\draw (1) to[out=-60, in = 90] (-0.1,0) ;
	
	\draw (3) to[out=120, in = -90] (-0.4,6) ;
    \draw (3) to (0,6) ;
	\draw (3) to[out=60, in = -90] (0.4,6) ;	
\end{tikzpicture}
	 & 
\centering
\begin{tikzpicture}[scale=3/4]	
	\floor (1) at (0,1.5) {} ;
	\floor (2) at (0,3) {} ;
	\floor (3) at (0.5,4.5) {} ;
	
	\draw (2) to[out=50, in=-90] (3) ;
	\draw (2) to[out=90, in=-130] (3) ;
	
	\draw (1) to[out=120, in=-120] (2) ;
    \draw (1) to (2) ;
    \draw (1) to[out=60, in=-60] (2) ;
	
	\draw (3) to[out=120, in=-90] (0.1,6) ;
	\draw (3) to[out=60, in=-90] (0.9,6) ;
 
	\draw (2) to[out=120, in = -90] (-0.7,6) ;
	
	\draw (1) to[out=-120, in = 90] (-0.4,0) ;
    \draw (1) to (0,0) ;
	\draw (1) to[out=-60, in = 90] (0.4,0) ;
\end{tikzpicture} 
	 & 
\centering
\begin{tikzpicture}[scale=3/4]	
	\floor (1) at (0,1.5) {} ;
	\floor (2) at (0.5,3) {} ;
	\floor (3) at (0,4.5) {} ;
	
	\draw (1) to[out=50, in=-90] (2) ;
	\draw (1) to[out=90, in=-130] (2) ;
	
	\draw (2) to[out=130, in=-90] (3) ;
	\draw (2) to[out=90, in=-50] (3) ;

    \draw (1) to[out=120, in=-120] (3) ;
	
	\draw (1) to[out=-120, in = 90] (-0.4,0) ;
    \draw (1) to (0,0) ;
	\draw (1) to[out=-60, in = 90] (0.4,0) ;
	
	\draw (3) to[out=120, in = -90] (-0.4,6) ;
    \draw (3) to (0,6) ;
	\draw (3) to[out=60, in = -90] (0.4,6) ;
\end{tikzpicture} 
& $G_3(\Delta^0_{3,3},s)$ \\
\hline
\hline 
$0$ & $2[2]^2$ & $2[2]^2$ & $6$ & $6$ & $6$ & $4q + 26 + \dots$ \\
\hline 
$1$ & $\star$ & $\star$ & $4$ & $\star$ & $\star$ & $4q + 24 + \dots$ \\
\hline 
% $2$ & $\star$ & $\star$ & $2$ & $\star$ & $\star$ & $4q + 22 + \dots$ \\
% \hline 
% $3$ & $2[3]$ & $\star$  & $\star$ & $\star$ & $\star$ & $4q + 20 + \dots$ \\
%  \hline
\end{tabular}
\caption{Computation of $G_3(\Delta^0_{3,3},s)$.}
\label{table-square33g3}
\end{table}

\begin{ex} \label{ex-delta230}
We compute $G_3(\Delta^2_{3,0},s)$. Table \ref{table-delta230g3} gives
\[  G_3(\Delta^2_{3,0},s) = 4q + (24-2s) + \dots \] 
\end{ex}

\begin{table}[h!]
\renewcommand{\arraystretch}{1.5}
\centering
\begin{tabular}{|c||c|c|c|c||c|}
\hline 
$s$ &
\centering
\begin{tikzpicture}[scale=3/4]
	\floor (1) at (0,1.5) {} ;
	\floor (2) at (0,3) {} ;
    \floor (3) at (0,4.5) {} ;
	
	\draw (2) to node[left,pos=1/2]{\scriptsize $2$} (3) ;
	
	\draw (1) to[out=140, in=-140] (2) ;
	\draw (1) to[out=110, in=-110] (2) ;
	\draw (1) to[out=70, in=-70] (2) ;
	\draw (1) to[out=40, in=-40] (2) ;
 
	\draw (1) to[out=-120, in = 90] (-0.4,0) ;
    \node at (0,0.5) {$\dots$} ;
	\node at (0,-0.4) {\large $\underbrace{ }_{6}$} ;
	\draw (1) to[out=-60, in = 90] (0.4,0) ;
\end{tikzpicture}
	 & 
\centering
\begin{tikzpicture}[scale=3/4]	
	\floor (1) at (0,1.5) {} ;
	\floor (2) at (0,3) {} ;
    \floor (3) at (0,4.5) {} ;
	
	\draw (2) to[out=120, in=-120] (3) ;
	\draw (2) to[out=60, in=-60] (3) ;
	
	\draw (1) to[out=120, in=-120] node[left,pos=1/2]{\scriptsize $2$} (2) ;
	\draw (1) to (2) ;
	\draw (1) to[out=60, in=-60] (2) ;
 
	\draw (1) to[out=-120, in = 90] (-0.4,0) ;
    \node at (0,0.5) {$\dots$} ;
	\node at (0,-0.4) {\large $\underbrace{ }_{6}$} ;
	\draw (1) to[out=-60, in = 90] (0.4,0) ;
\end{tikzpicture}
	 & 
\centering
\begin{tikzpicture}[scale=3/4]
	\floor (1) at (0,1.5) {} ;
	\floor (2) at (0.5,3) {} ;
    \floor (3) at (0.5,4.5) {} ;
	
	\draw (2) to[out=120, in=-120] (3) ;
	\draw (2) to[out=60, in=-60] (3) ;
	
	\draw (1) to[out=40, in=-70] (2) ;
	\draw (1) to[out=65, in=-100] (2) ;
	\draw (1) to[out=90, in=-130] (2) ;
	
	\draw (2) to[out=-50, in=90] (1.3,0) ;
 
	\draw (1) to[out=-120, in = 90] (-0.4,0) ;
    \node at (0,0.5) {$\dots$} ;
	\node at (0,-0.4) {\large $\underbrace{ }_{5}$} ;
	\draw (1) to[out=-60, in = 90] (0.4,0) ;
\end{tikzpicture}
	 & 
\centering
\begin{tikzpicture}[scale=3/4]	
	\floor (1) at (0,1.5) {} ;
	\floor (2) at (0.5,3) {} ;
    \floor (3) at (0,4.5) {} ;
	
	\draw (2) to (3) ;
	
	\draw (1) to[out=40, in=-70] (2) ;
	\draw (1) to[out=65, in=-100] (2) ;
	\draw (1) to[out=90, in=-130] (2) ;

    \draw (1) to[out=120, in=-120] (3) ;
 
	\draw (1) to[out=-120, in = 90] (-0.4,0) ;
    \node at (0,0.5) {$\dots$} ;
	\node at (0,-0.4) {\large $\underbrace{ }_{6}$} ;
	\draw (1) to[out=-60, in = 90] (0.4,0) ;
\end{tikzpicture}
& $G_3(\Delta^2_{3,0},s)$ \\
\hline
\hline 
$0$ & $[2]^2$ & $3[2]^2$ & $10$ & $6$ & $4q + 24 + \dots$ \\
\hline 
$1$ & $\star$ & $\star$ & $8$ & $\star$ & $4q + 22 + \dots$ \\
\hline 
% $2$ & $\star$ & $\star$ & $6$ & $\star$ & $4q + 20 + \dots$ \\
% \hline 
% $3$  & $\star$ & $\star$ & $4$ & $\star$ & $4q + 18 + \dots$ \\
%  \hline
\end{tabular}
\caption{Computation of $G_3(\Delta^2_{3,0},s)$.}
\label{table-delta230g3}
\end{table}

\begin{ex} \label{ex-delta222}
We compute $G_3(\Delta^2_{2,2},s)$. For $g=3=\gmax(\Delta^2_{2,2})$ there is a unique marked floor diagram and it has multiplicity $1$, hence
\[ G_3(\Delta^2_{2,2},s) = 1.\]
\end{ex}

\begin{ex} \label{ex-delta214}
We compute $G_3(\Delta^2_{1,4},s)$. Since $\gmax(\Delta^2_{1,4})=0$, one has
\[  G_3(\Delta^2_{1,4},s) = 0.\]
\end{ex}

\begin{ex}
	As a last example, we perform the computation for $\Delta^1_{d,0}$ and with genus $g = \gmax(\Delta^1_{d,0})-1 = \frac{(d-1)(d-2)}{2}-1$, with $d\geq3$. We take $s \leq d/2$ and use the pairing $S = \{\{1,2\},\dots,\{2s-1,2s\}\}$. Note that thanks to theorem \ref{theo-GSinv-poly-s} we could make the computation only for $s\leq1$. Let $v_d \prec \dots \prec v_2 \prec v_1$ be the vertices of the unique floor diagram with genus $\gmax(\Delta^1_{d,0})$. There are two possible ways to construct a diagram of genus $g$.	
	\begin{itemize}
		\item One can merge two bounded edges into an edge of weight $2$, see figure \ref{fig-diag-CP2-gmax-1-a}.
		
		\item One can choose a vertex $v_i$ for $2 \leq i \leq d$, delete an edge below and above $v_i$, then add an edge adjacent to $v_{i-1}$ and $v_{i+1}$, see figures \ref{fig-diag-CP2-gmax-1-b}, \ref{fig-diag-CP2-gmax-1-c} and \ref{fig-diag-CP2-gmax-1-d} (in figure \ref{fig-diag-CP2-gmax-1-d}, one understands $v_{d+1}$ as a vertex at infinity, hence the added edge is infinite).
	\end{itemize}
	
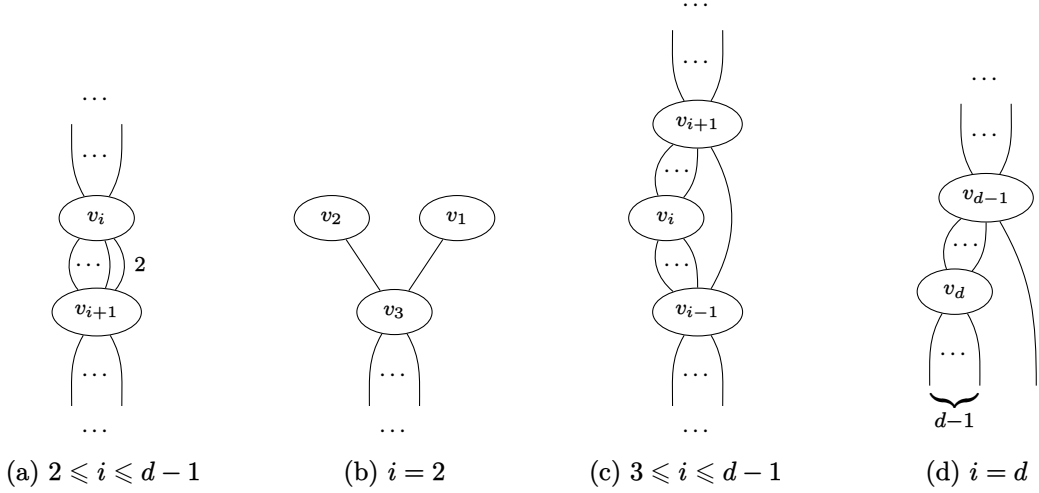
\begin{figure}[h!]
\begin{subfigure}[t]{0.24\textwidth}
	\centering
	\begin{tikzpicture}[scale=5/6]
	\floor (1) at (0,1.5) {\scriptsize $v_{i+1}$} ;
	\floor (2) at (0,3) {\scriptsize $v_i$} ;
	
	\draw (1) to[out=130, in=-130] (2) ;
    \node at (-0.1,2.25) {\scriptsize $\dots$} ;
	\draw (1) to[out=70, in=-70] (2) ;
	\draw (1) to[out=50, in=-50] node[right,pos=1/2] {\scriptsize $2$} (2) ;
	
	\draw (1) to[out=-120, in=90] (-0.4,0) ;
	\draw (1) to[out=-60, in=90] (0.4,0) ;
    \node at (0,0.5) {\scriptsize $\dots$} ;
    \node at (0,-0.4) {\scriptsize $\dots$} ;
	
	\draw (2) to[out=120, in=-90] (-0.4,4.5) ;
	\draw (2) to[out=60, in=-90] (0.4,4.5) ;
    \node at (0,4) {\scriptsize $\dots$} ;
    \node at (0,4.9) {\scriptsize $\dots$} ;
	
	\end{tikzpicture}
	\caption{$2\leq i \leq d-1$}
	\label{fig-diag-CP2-gmax-1-a}
\end{subfigure}
\begin{subfigure}[t]{0.24\textwidth}
	\centering
	\begin{tikzpicture}[scale=5/6]
	\floor (0) at (0,1.5) {\scriptsize $v_3$} ;
	\floor (1) at (-1,3) {\scriptsize $v_2$} ;
	\floor (2) at (1,3) {\scriptsize $v_1$} ;
	
	\draw (0) to (1) ;
	\draw (0) to (2) ;	
	
	\draw (0) to[out=-120, in=90] (-0.4,0) ;
	\draw (0) to[out=-60, in=90] (0.4,0) ;
    \node at (0,0.5) {\scriptsize $\dots$} ;
    \node at (0,-0.4) {\scriptsize $\dots$} ;	
	\end{tikzpicture}
	\caption{$i=2$}
	\label{fig-diag-CP2-gmax-1-b}
\end{subfigure}
\begin{subfigure}[t]{0.24\textwidth}
	\centering
	\begin{tikzpicture}[scale=5/6]
	\floor (0) at (0,1.5) {\scriptsize $v_{i-1}$} ;
	\floor (1) at (-0.5,3) {\scriptsize $v_i$} ;
	\floor (2) at (0,4.5) {\scriptsize $v_{i+1}$} ;

	\draw (0) to[out=140, in=-110] (1) ;
	\draw (0) to[out=90, in=-50] (1) ;
    \node at (-0.3,2.25) {\scriptsize $\dots$} ;

	\draw (1) to[out=110, in=-140] (2) ;
	\draw (1) to[out=50, in=-90] (2) ;
    \node at (-0.3,3.75) {\scriptsize $\dots$} ;
	
    \draw (0) to[out=60, in=-60] (2) ;
	
	\draw (0) to[out=-120, in=90] (-0.4,0) ;
	\draw (0) to[out=-60, in=90] (0.4,0) ;
    \node at (0,0.5) {\scriptsize $\dots$} ;
    \node at (0,-0.4) {\scriptsize $\dots$} ;
	
	\draw (2) to[out=120, in=-90] (-0.4,6) ;
	\draw (2) to[out=60, in=-90] (0.4,6) ;
    \node at (0,5.5) {\scriptsize $\dots$} ;
    \node at (0,6.4) {\scriptsize $\dots$} ;	
	\end{tikzpicture}
	\caption{$3\leq i \leq d-1$}
	\label{fig-diag-CP2-gmax-1-c}
\end{subfigure}
\begin{subfigure}[t]{0.24\textwidth}
	\centering
	\begin{tikzpicture}[scale=5/6]
	\floor (0) at (-0.5,1.5) {\scriptsize $v_d$} ;
	\floor (1) at (0,3) {\scriptsize $v_{d-1}$} ;

	\draw (0) to[out=110, in=-140] (1) ;
	\draw (0) to[out=50, in=-90] (1) ;
    \node at (-0.3,2.25) {\scriptsize $\dots$} ;

    \draw (1) to[out=-60, in=90] (0.8,0) ; ;
	
	\draw (0) to[out=-120, in=90] (-0.9,0) ;
	\draw (0) to[out=-60, in=90] (-0.1,0) ;
    \node at (-0.5,0.5) {\scriptsize $\dots$} ;
	\node at (-0.5,-0.4) {\large $\underbrace{ }_{d-1}$} ;	
	
	\draw (1) to[out=120, in=-90] (-0.4,4.5) ;
	\draw (1) to[out=60, in=-90] (0.4,4.5) ;
    \node at (0,4) {\scriptsize $\dots$} ;
    \node at (0,4.9) {\scriptsize $\dots$} ;	
	\end{tikzpicture}
	\caption{$i=d$}
	\label{fig-diag-CP2-gmax-1-d}
\end{subfigure}
\caption{The floor diagrams with Newton polygon $\Delta^1_{d,0}$ and genus $\gmax(\Delta^1_{d,0})-1$.}
\label{fig-diag-CP2-gmax-1}
\end{figure}	
	
	In case (a), the $S$-multiplicity is $[2]^2$. If the bounded edge of weight $2$ is adjacent to $v_i$ and $v_{i+1}$, then there are $i-1$ markings compatible with $S$. One can choose $2\leq i \leq d-1$, hence the case (a) contributes
	\[ [2]^2 \dsum_{i=2}^{d-1} (i-1) = [2]^2 \dfrac{(d-1)(d-2)}{2} \]
	to $G_g(\Delta^1_{d,0},s)$.
	
	In cases (b), (c) and (d) the $S$-multiplicity is $1$. In case (b) the number of markings is $3$. In case (c) the number of markings is $2i-1$. Last, in case (d) the number of compatible markings is $2d-1-2s$. Hence the contribution of cases (b), (c) and (d) to $G_g(\Delta^1_{d,0},s)$ is
	\[ 3 + \dsum_{i=3}^{d-1}(2i-1) + 2d-1-2s = d^2-1-2s .\]
	
	In the end one has
	\begin{align*}
	G_{\gmax(\Delta^1_{d,0})-1}(\Delta^1_{d,0},s)
	  &= \dfrac{(d-1)(d-2)}{2} q + (2d^2-3d+1-2s) + \dots \\
	  &= \binom{\gmax(\Delta^1_{d,0})}{\gmax(\Delta^1_{d,0})-1} q + (2d^2-3d+1-2s) + \dots
	\end{align*}	
In particular, for $d=3$ we get $G_0(\Delta_3,s) = q + (10-2s) + q^{-1}$ and we recover example \ref{ex-calcul-G0-Delta3}.	
\end{ex}

\subsection{Observations and conjecture}

From the calculations of the previous section \ref{subsec-ex-combi-GS-inv}, one can make several observations leading to few conjectures.

\subsubsection{Invariance under lattice preserving transformation}

Recall a lattice preserving transformation is an element of the affine group of $\R^2$ for which the lattice $\Z^2$ is invariant. 
In the previous section, several polygons for which we performed calculations are linked by a lattice preserving transformation.  First, examples \ref{ex-square32} and \ref{ex-square23} show that for $0\leq g \leq 2$ one has
\[ G_g(\Delta^0_{2,3},s) = G_g(\Delta^0_{3,2},s) .\]
Second, examples \ref{ex-delta122} and \ref{ex-nabla122} show that for $0\leq g \leq 2$ one has
\[ G_g(\Delta^1_{2,2},s) = G_g(\nabla^1_{2,2},s) .\]
Although not detailed in this paper, one can check for instance that for $0\leq g \leq 2$ one also has
\[ G_g(\Delta^2_{2,1},s) = G_g(A\Delta^2_{2,1},s) \]
where $A$ is the matrix $A = \begin{pmatrix} 1 & 0 \\ 1 & 1 \end{pmatrix}$, and for $g=3$ one has
\[ G_3(\Delta^2_{3,0},s) = G_3(A\Delta^2_{3,0},s) . \]
All these observations lead to the following conjecture.

\begin{conj}
    Let $\Delta$ and $\Delta'$ be two $h$-transverse polygons. If there exists a lattice preserving transformation $f$ such that $f(\Delta) = \Delta'$, then for any $g\in \{0,\dots,\gmax(\Delta)\}$ and $s\in\{0,\dots,\smax(\Delta,g)\}$ one has
    \[ G_g(\Delta,s) = G_g(\Delta',s) .\]
\end{conj}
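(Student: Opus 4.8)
The plan is to first dispose of the trivial ingredient of a lattice preserving transformation and then isolate the real content in the linear action of $\GL_2(\Z)$. A translation by an integer vector leaves unchanged all of the combinatorial data $a(\Delta)$, $e^{\pm\infty}(\Delta)$, $\bleft(\Delta)$, $\bright(\Delta)$ that enters Definition \ref{def-floordiag}, as well as $y(\Delta)$ and $\gmax(\Delta)$; hence it changes neither the set of floor diagrams, nor the admissible pairings, nor the multiplicities, and $G_g(\Delta,s)=G_g(\Delta+u,s)$ is immediate. Since every lattice preserving transformation is a composition of translations with an element of $\GL_2(\Z)$, it remains to prove: if $A\in\GL_2(\Z)$ is such that both $\Delta$ and $\Delta'=A\Delta$ are $h$-transverse, then $G_g(\Delta,s)=G_g(\Delta',s)$.

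The route I expect to go through is the tropical side, via Proposition \ref{prop-equal-shustin}. The invariant $RB_q(\Delta,g,(n_1,n_2))$ of \cite{shustin-2024-refined} is a weighted count of plane tropical curves with Newton polygon $\Delta$ through a generic configuration of $n_1+n_2$ points, with a prescribed tangency direction imposed at $n_2$ of them, the weight being a Block--Göttsche type multiplicity assembled only from quantities of the form $[\,|\det(u,v)|\,]$ for integer edge directions $u,v$. Now $\GL_2(\Z)$ acts on $\R^2$ sending tropical curves with Newton polygon $\Delta$ to tropical curves with Newton polygon $A\Delta$, generic configurations to generic configurations, prescribed directions to prescribed directions, and fixing every determinant $|\det(u,v)|$; so $RB_q(\Delta,g,(n_1,n_2))=RB_q(A\Delta,g,(n_1,n_2))$. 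Combining this with Proposition \ref{prop-equal-shustin} — applicable because $\Delta$ and $\Delta'$ are both $h$-transverse by hypothesis — and with $y(A\Delta)=y(\Delta)$, $\gmax(A\Delta)=\gmax(\Delta)$, $\smax(A\Delta,g)=\smax(\Delta,g)$, one gets $G_g(\Delta,s)(q)=RB_q(\Delta,g,\dots)=RB_q(\Delta',g,\dots)=G_g(\Delta',s)(q)$. The one point needing care is whether the $\GL_2(\Z)$-invariance of $RB_q$ is proved in \cite{shustin-2024-refined} or follows from their construction without a hidden normalization; if it is not explicit there, establishing it is a routine adaptation of the genus $0$ argument and not the crux.

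A proof more in the spirit of this paper would stay at the level of floor diagrams and reduce $A$ to a set of generators of $\GL_2(\Z)$ that can individually carry some $h$-transverse polygon to another: the shears $\left(\begin{smallmatrix}1&0\\ \pm1&1\end{smallmatrix}\right)$ (iterable, when $\Delta$ has no horizontal edge, to any $\left(\begin{smallmatrix}1&0\\ k&1\end{smallmatrix}\right)$) and the rotation by $\pi/2$, the latter already visible in the passage from $\Delta^1_{2,2}$ to $\nabla^1_{2,2}$ in Examples \ref{ex-delta122}--\ref{ex-nabla122}. For each such $A$ one would need an explicit bijection between isomorphism classes of marked floor diagrams of $\Delta$ and of $A\Delta$ respecting the partition into $E_0,E_1,E_2$, hence preserving the $S$-multiplicity $\mu_S$ term by term. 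This is exactly where I expect the main obstacle: a floor diagram records the tropical curve as seen along the vertical direction, whereas a shear changes which direction plays the role of ``vertical'', and no combinatorial \emph{change of floor direction} move is presently available in the refined setting with pairings; producing one and checking its compatibility with $\mu_S$ amounts to re-proving a correspondence theorem by hand. For that reason I would present the tropical argument as the actual proof and record a purely combinatorial proof as the genuinely harder open question, consistent with the statement being phrased only as a conjecture.
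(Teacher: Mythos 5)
First, note that the paper offers no proof of this statement: it is stated as a conjecture, supported only by the computations of section \ref{subsec-ex-combi-GS-inv} and by an asymptotic genus-$1$ result quoted from \cite{blomme24asymptotic}. So there is no proof in the paper to compare against; the question is whether your argument actually closes the conjecture, and it does not.

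The translation step and the reduction to $A\in\GL_2(\Z)$ are fine, and the identities $y(A\Delta)=y(\Delta)$, $\gmax(A\Delta)=\gmax(\Delta)$, $\smax(A\Delta,g)=\smax(\Delta,g)$ are correct. The gap is exactly at the point you flag and then dismiss as routine: the $\GL_2(\Z)$-invariance of $RB_q(\Delta,g,(n_1,n_2))$. Proposition \ref{prop-equal-shustin} identifies $G_g(\Delta,s)$ with $RB_q$ by matching floor diagrams with \emph{floor-decomposed} tropical curves, i.e.\ with curves adapted to a vertically stretched point configuration, and the marking data in \cite{shustin-2024-refined} is likewise tied to a distinguished direction --- this is visible in the paper's own proof of proposition \ref{prop-equal-shustin}, where the factors $\{\omega\}$ and the centrally embedded cycles are attached to \emph{vertical} edges. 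Applying $A$ turns such a configuration into one that is neither vertically stretched nor aligned with the same distinguished direction, so to conclude $RB_q(\Delta,\dots)=RB_q(A\Delta,\dots)$ you need the Shustin--Sinichkin count to be independent of the point configuration \emph{and} of the prescribed tangency directions in a form strong enough to compare the two floor-decomposed limits. That independence is precisely the geometric invariance that is not available (or at least not cited) here for general $g$ and $s>0$; granted proposition \ref{prop-equal-shustin}, it is essentially equivalent to the conjecture itself. The author, who has both proposition \ref{prop-equal-shustin} and the genus-$0$ tropical argument in hand, still states lattice invariance only as a conjecture, which should be read as strong evidence that this step is the crux rather than a routine adaptation. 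Your closing paragraph in fact concedes the point: the purely combinatorial route fails for the reason you give (no change-of-floor-direction move compatible with $\mu_S$), and the tropical route fails for the same reason one level up. As it stands, the proposal is a reduction of the conjecture to an unproved invariance statement, not a proof.
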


%We already know the conjecture is true for $g=0$. 
By the results of \cite{blomme24asymptotic} this conjecture is asymptotically true in genus $1$, if $\Delta$ and $\Delta'$ are moreover non-singular and horizontal. Indeed, if the integral lengths of the sides of $\Delta$ are large enough, we know that the coefficients of small codegree of $G_1(\Delta,s)$ are given by polynomials which only depend on $y(\Delta)$, $\chi(\Delta)$ and $\gmax(\Delta)$, and similarly for $\Delta'$. Since the triplet $(y,\chi,\gmax)$ is the same for $\Delta$ and $\Delta'$, then the coefficients of small codegrees of $G_1(\Delta,s)$ and $G_1(\Delta',s)$ are the same.

\subsubsection{Abramovich-Bertram formula}

We already know by \cite{bousseau_refined_2021} that Block-Göttsche refined invariants, \ie $G_g(\Delta,0)$, and genus $0$ Göttsche-Schroeter invariants, \ie $G_0(\Delta,s)$, satisfy the Abramovich-Bertram formula. One can wonder if this formula also holds for $g$ and $s$ both non-zero. Some examples of the previous section would plea in favor of a positive answer.
From examples \ref{ex-square23}, \ref{ex-delta221} and \ref{ex-delta213} we observe that for $0\leq g\leq 2$ one has
\[ G_g(\Delta^0_{2,3},s) = G_g(\Delta^2_{2,1},s) + 3\times G_g(\Delta^2_{1,3},s). \]
From examples \ref{ex-square22}, \ref{ex-delta220} and \ref{ex-delta212} we observe that for $0\leq g\leq 2$ one has
\[ G_g(\Delta^0_{2,2},s) = G_g(\Delta^2_{2,0},s) + 2\times G_g(\Delta^2_{1,2},s). \]
From examples \ref{ex-square33}, \ref{ex-delta230}, \ref{ex-delta222} and \ref{ex-delta214} we observe that for $g=3$ one has
\[ G_3(\Delta^0_{3,3},s) = G_3(\Delta^2_{3,0},s) + 2\times G_3(\Delta^2_{2,2},s) + 6\times G_3(\Delta^2_{1,4},s). \]
Althought note detailed here, one can check that the previous equality also holds for $g=2$ and $0\leq s \leq 2$.

These examples lead to conjecture that the higher genus Göttsche-Schroeter invariants satisfy the Abramovich-Betram formula.

\begin{conj}[Abramovich-Bertram formula] \label{conj-AB}
    Let $a,b \in\N$ and $g\geq 0$. For any $s\geq0$ one has
    \[ G_g(\Delta^0_{a,a+b},s) = \dsum_{j=0}^a \binom{b+2j}{j} G_g(\Delta^2_{a-j,b+2j},s).  \]
\end{conj}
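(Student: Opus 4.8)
The plan is to give a purely combinatorial proof at the level of (marked) floor diagrams, building on the two special cases that are already established: the genus-$0$, arbitrary-$s$ case of Brugallé and Jaramillo-Puentes \cite{brugalle_polynomiality_2022} and the case $s=0$ in all genera (Block--Göttsche invariants), which is exactly Bousseau's theorem \cite{bousseau_refined_2021}. A first reduction, via Theorem \ref{theo-invS}, is to replace $G_g(\Delta,s)$ everywhere by the count against one fixed convenient pairing on each side; I would take $S=\{\{1,2\},\dots,\{2s-1,2s\}\}$, which pins the paired labels as low as possible in every marked diagram, so that the pairing is concentrated near the bottom floors — the same region where the $\FF_0\rightsquigarrow\FF_2$ degeneration combinatorics takes place. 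Localizing the two structures in the same part of the diagram is what should make them compatible.

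For the main argument I would construct a multiplicity-preserving correspondence $\Phi$ that sends a marked floor diagram $(\D,m)$ with Newton polygon $\Delta^0_{a,a+b}$ and genus $g$ to a marked floor diagram with Newton polygon $\Delta^2_{a-j,b+2j}$ for a well-defined $j=j(\D)$, with each target marked diagram having exactly $\binom{b+2j}{j}$ preimages and with $\mu_S$ unchanged. Here $\Delta^0_{a,a+b}$ has $a$ floors while $\Delta^2_{a-j,b+2j}$ has $a-j$ floors and a bottom edge longer by $a$; the parameter $j$ should measure how much of $\D$ degenerates onto the $(-2)$-curve under the elementary transformation $\FF_0\rightsquigarrow\FF_2$, with $\binom{b+2j}{j}$ counting the resulting relabeling choices on the bottom sources — this is precisely the correspondence that already underlies Bousseau's proof at $s=0$, which one now needs to carry out equivariantly with respect to the markings and the pairing. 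On the level of multiplicities the only local models that occur are the four configurations analyzed in Proposition \ref{prop-equal-shustin} (an unpaired elevator, an elevator paired with an adjacent vertex, and two elevators paired across one or two common vertices), so matching $\mu_S$ through $\Phi$ reduces to the quantum-integer identities of Lemma \ref{lem-quantum} together with the $\binom{w-1}{g_k}$ weight-splitting bookkeeping already used in the proof of Theorem \ref{theo-GSinv-poly-s}.

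An alternative, and possibly more robust, route is to induct on $s$. The base case $s=0$ is again Bousseau's theorem. For the inductive step one applies the wall-crossing relation of Proposition \ref{prop-welsch}, namely $G_g(\Delta,s+1)=G_g(\Delta,s)-2\,G_g(\tilde\Delta,s)$, simultaneously to $\Delta^0_{a,a+b}$ and to each $\Delta^2_{a-j,b+2j}$. The catch is that cutting the top corner of a trapezoid $\Delta^n_{a,b}$ produces a pentagon rather than another trapezoid, so one first has to prove the Abramovich--Bertram identity for the larger family of $h$-transverse polygons with a fixed rectangular bottom edge and an arbitrary admissible top profile — a family closed under corner-cutting — and run the induction inside that family. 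Granted such a strengthened statement, the two sides satisfy the same recursion in $s$ term by term, exactly as in the blow-up heuristic following Proposition \ref{prop-equal-shustin}, and the conjecture for trapezoids drops out.

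The step I expect to be the genuine obstacle is, in either route, the interaction of the correspondence with the pairing. In the bijective approach one must check that $\Phi$ carries paired elements to paired elements and respects the "$\{i,i+1\}$ in a row" constraint of a pairing, which in turn constrains where the floor-contractions may legitimately occur relative to the markings; in the inductive approach one must verify that the corner-cutting operation and the bijection behind Proposition \ref{prop-welsch} commute appropriately with the $\FF_0$-to-$\FF_2$ correspondence, i.e. that $\widetilde{\Delta^0_{a,a+b}}$ and the $\widetilde{\Delta^2_{a-j,b+2j}}$ are themselves related by the generalized formula with the same binomial weights. Everything else — invariance of genus under $\Phi$, the quantum-integer matchings, and the $\binom{\gmax-\codeg}{g}$ counting of the genus-$g$ refinements of a genus-$0$ skeleton — is routine given Lemma \ref{lem-quantum} and the techniques already developed in the paper.
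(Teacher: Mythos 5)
The statement you are trying to prove is stated in the paper as Conjecture \ref{conj-AB}: the paper offers \emph{no proof} of it, only computational evidence (the verifications in examples \ref{ex-square23}, \ref{ex-delta221}, \ref{ex-delta213}, \ref{ex-square22}, \ref{ex-delta220}, \ref{ex-delta212}, \ref{ex-square33}, \ref{ex-delta230}, \ref{ex-delta222}, \ref{ex-delta214}) and the geometric heuristic, following Proposition \ref{prop-equal-shustin}, for why the $q=1$ evaluations should satisfy the formula. So there is no proof in the paper to compare yours against, and your text is likewise not a proof but a plan of attack whose hard steps you yourself flag as open. That is the honest state of affairs, and it should be said plainly rather than presented as a proof.

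Beyond that, two concrete points where the plan as written would fail. First, in the bijective route you posit a correspondence $\Phi$ sending each marked diagram for $\Delta^0_{a,a+b}$ to one for $\Delta^2_{a-j,b+2j}$ ``with $\mu_S$ unchanged'' and with constant fiber $\binom{b+2j}{j}$. The two families of diagrams have different numbers of floors ($a$ versus $a-j$), different divergence constraints (every floor has divergence $0$ for $\Delta^0_{a,a+b}$ but $-2$ for $\Delta^2_{a-j,b+2j}$), and different source/sink counts, so edge weights cannot be transported unchanged; any correspondence must redistribute weights, and the multiplicity identity can only hold after summing over fibers, not term by term. This is precisely where the actual work lies, and Lemma \ref{lem-quantum} alone does not supply the needed identities. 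Second, in the inductive route, Proposition \ref{prop-welsch} applies only to polygons whose top is the corner configuration of figure \ref{fig-welscha}; the trapezoids $\Delta^0_{a,a+b}$ and $\Delta^2_{a-j,b+2j}$ have horizontal top edges (of lengths $a+b$ and $b+2j$), so the recursion $G_g(\Delta,s+1)=G_g(\Delta,s)-2G_g(\tilde\Delta,s)$ is not available for them as stated. Enlarging to a corner-cut-closed family, as you suggest, is not a routine fix: one would then have to formulate and prove the Abramovich--Bertram identity for that larger family with compatible binomial weights, which is a strictly harder statement than the conjecture itself. Either route may well be viable, but as it stands neither is carried out, and the conjecture remains open.
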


%%%%%%%%%%%%%%%%%%%%%%%%%%%%%%%%%%%%%%%%%%%%%%%%%%%%%%%%%%

\renewcommand{\refname}{References}
\bibliographystyle{alpha}
\bibliography{ref}

\end{document}